\newcommand{\details}[1]{}
\newcommand{\hookdownarrow}{\mathrel{\rotatebox[origin=c]{-90}{$\hookrightarrow$}}}
\newcommand{\longuparrow}{\mathrel{\rotatebox[origin=c]{90}{$\longrightarrow$}}}
\newcommand{\longdownarrow}{\mathrel{\rotatebox[origin=c]{-90}{$\longrightarrow$}}}
\newcommand{\longrightarrowdbl}{\rlap{$\longrightarrow$}\mathrel{\mkern-1mu}
\longrightarrow}
\newtheorem{theorem}{Theorem}[section]
\newtheorem*{theorem*}{Theorem}
\newtheorem{corollary}[theorem]{Corollary}
\newtheorem*{corollary*}{Corollary}
\newtheorem{lemma}[theorem]{Lemma}
\newtheorem*{lemma*}{Lemma}
\newtheorem*{claim*}{Claim}
\newtheorem{construction}[theorem]{Construction}
\newtheorem{proposition}[theorem]{Proposition}
\newtheorem*{proposition*}{Proposition}
\newtheorem{conjecture}[theorem]{Conjecture}
\newtheorem*{conjecture*}{Conjecture}
\newtheorem{def-proposition}[theorem]{Definition-Proposition}
\theoremstyle{definition}
\newtheorem{definition}[theorem]{Definition}
\newtheorem*{definition*}{Definition}
\newtheorem{remark}[theorem]{Remark}
\newtheorem{notation}[theorem]{Notation}
\newtheorem*{example*}{Example}
\numberwithin{equation}{section}
\newcommand{\ZZ}{\mathbb{Z}}
\newcommand{\QQ}{\mathbb{Q}}
\newcommand{\RR}{\mathbb{R}}
\newcommand{\CC}{\mathbb{C}}
\newcommand{\GG}{\mathbb{G}}
\newcommand{\PP}{\mathbb{P}}
\newcommand{\End}{\mathrm{End}}
\newcommand{\Hom}{\mathrm{Hom}}
\newcommand{\Ext}{\mathrm{Ext}}
\newcommand{\uHom}{\underline{\mathrm{Hom}}}
\newcommand{\UR}{\mathrm{UR}}
\newcommand{\W}{\mathrm{W}}
\newcommand{\HH}{\mathrm{H}}
\newcommand{\Lie}{\mathrm{Lie}\,}
\newcommand{\Gr}{\mathrm{Gr}}
\newcommand{\rk}{\mathrm{rk}}
\newcommand{\dR}{\mathrm{dR}}
\newcommand{\sing}{{\mathrm{sing}}\,}
\newcommand{\Galmot}{{\mathcal{G}}{\mathrm{al}}_{\mathrm{mot}}}
\newcommand{\tlog}{\widetilde{\log}}
\newcommand{\tE}{\widetilde{E}}
\newcommand{\Tr}{\mathrm{Tr}}
\newcommand{\oK}{\overline K}
\newcommand{\oQQ}{\overline{\QQ}}
\newcommand{\gal}{{\mathrm{Gal}}(\oK/K)}
\newcommand{\cP}{\mathcal{P}}
\newcommand{\cE}{\mathcal{E}}
\newcommand{\cL}{\mathcal{L}}
\begin{document}

\title[Semi-abelian analogues of Schanuel Conjecture]
{Semi-abelian analogues of Schanuel Conjecture and applications}

\author{Cristiana Bertolin, Patrice Philippon, Biswajyoti Saha and Ekata Saha}

\address{Dipartimento di Matematica, Universit\`a di Padova, Via Trieste 63, Padova, Italy}
\email{cristiana.bertolin@unipd.it}

\address{\'Equipe de Th\'eorie des Nombres, Institut de Math\'ematiques de Jussieu-Paris Rive Gauche,
UMR CNRS 7586, Paris, France}
\email{patrice.philippon@upmc.fr}

\address{Department of Mathematics, Indian Institute of Technology Delhi, New Delhi 110016, India}
\email{biswajyoti@maths.iitd.ac.in}

\address{Department of Mathematics, Indian Institute of Technology Delhi, New Delhi 110016, India}
\email{ekata@maths.iitd.ac.in}

\subjclass[2020]{11J81, 11J89, 14K20, 14K25}

\keywords{semi-abelian analogue of Schanuel conjecture, linear disjointness, algebraic independence, semi-abelian exponential, generalized semi-abelian logarithm}




\begin{abstract}
In this article we study semi-abelian analogues of Schanuel conjecture. As showed by the first author, Schanuel conjecture is equivalent to the Generalized Period conjecture applied to 1-motives without abelian part. Extending her methods, the second, the third and the fourth authors have introduced the abelian analogue of Schanuel conjecture as the Generalized Period conjecture applied to 1-motives without toric part. As a first result of this paper, we define the semi-abelian analogue of Schanuel conjecture as the Generalized Period conjecture applied to 1-motives. 

C. Cheng et al. proved that Schanuel conjecture implies the algebraic independence of the values of the iterated exponential and the values of the iterated logarithm, answering a question of M. Waldschmidt. The second, the third and the fourth authors have investigated a similar question in the setup of abelian varieties: the Weak Abelian Schanuel conjecture implies the algebraic independence of the values of the iterated abelian exponential and the values of an iterated generalized abelian logarithm. The main result of this paper is that a Relative Semi-abelian conjecture implies the algebraic independence of the values of the iterated semi-abelian exponential and the values of an iterated generalized semi-abelian logarithm.

\end{abstract}


\maketitle


\tableofcontents

\section{Introduction}

In \cite{C} C. Cheng et al. proved that Schanuel conjecture implies the algebraic independence of the values of the iterated exponential and the values of the iterated logarithm, answering a question of M. Waldschmidt. In \cite{PSS} the second, the third and the fourth authors have investigated Waldschmidt's question in the context of abelian varieties: more precisely, they first introduce after G.Vall\'ee \cite{V18} an abelian analogue of Schanuel conjecture (\emph{see} Conjecture \ref{A}) and a weak version of it, as a consequence of the Generalized Period conjecture  applied to 1-motives without toric part. Secondly they show that the Weak Abelian conjecture (\emph{see} Conjecture \ref{WA}) implies the algebraic independence of iterated values of an abelian exponential on one hand and iterated values of a corresponding generalized abelian logarithm on the other hand. 
 
 The aim of this paper is to generalize further Waldschmidt's question in the setup of semi-abelian varieties, which are extensions of abelian varieties by tori. Consider such an extension $G$ of an abelian variety $A$ by the torus $\GG_m^s$ defined over a sub-field $K$ of $\CC$. As in the case of abelian varieties, we have a 
\emph{semi-abelian exponential} map given explicitly in \eqref{eq:semiabexp}:
 \[ \exp_{G}:  \Lie G_\CC  \longrightarrow   G (\CC).\]
 An inverse map for this semi-abelian exponential, called a \emph{semi-abelian logarithm} $\log_G$ for $G$, is well-defined only locally and it might be expressed by integrating differentials of the first and the third kind on the abelian variety $A$ (see  Definition \ref{eq:log_G}):
 \[	\log_G: G (\CC)  \longrightarrow  \Lie G_\CC \cong \Lie A_\CC \times  (\Lie \GG_{m,\CC})^s. \]
Involving also the integration of differential forms of the second kind on $A$, we get a \emph{generalized semi-abelian logarithm} $\tlog_G$ for $G$ (\emph{see} Definition \ref{deftlog_G}):
\[	\tlog_G: G (\CC)  \longrightarrow  \Lie A_\CC  \times \overline{\Lie A_\CC} \times  (\Lie \GG_{m,\CC})^s. \]
  With this notation we state the Semi-abelian analogue of Schanuel conjecture \ref{SA} as the Generalized Period conjecture \ref{eq:GPC} applied to 1-motives
. Then we introduce two relative versions of conjecture \ref{SA}, called the Relative Semi-abelian conjecture \ref{WSA-V2} and the Explicit Relative Semi-abelian conjecture \ref{WSA} respectively, \emph{see} also Conjecture \ref{WSA-V1} below. We show in Proposition \ref{lem:equivWSA-V2-GPC} that they are both consequences of the Generalized Period conjecture \ref{eq:GPC} that we recall now. 
  
Let ${\overline \QQ}$ be the algebraic closure of $\QQ$ in the field of complex numbers $\CC$ and let $K$ be a sub-field of $\CC$. Consider a smooth and quasi-projective algebraic variety $X$ defined over $K$. The \textit{periods of X} are the coefficients of the matrix which represents (with respect to $K$-bases) the canonical isomorphism between the algebraic De Rham cohomology $\HH^*_{\dR}(X)$ and the singular cohomology $\HH^*_{\sing}(X(\CC),K)= \HH^*_{\sing}(X(\CC),\QQ) \otimes_\QQ K$ of $X$,
given by the integration of differentials forms
\begin{align}\label{eq:betaX}
\beta_X: \HH^*_{\dR}(X) \otimes_K \CC & \longrightarrow  \HH^*_{\sing}(X(\CC),K) \otimes_K \CC\\
\nonumber	\omega & \longmapsto \Big[ \gamma \mapsto \int_\gamma \omega \Big]
.\end{align}

In Note 10 of \cite{G66}, assuming $X$ to be a curve of any genus or an abelian variety and $K=\overline{\QQ}$, A. Grothendieck has conjectured that \textit{any polynomial relation with rational coefficients between the periods of $X$ should have a geometrical origin}. More precisely, any algebraic cycle on $X$ and on the products of $X$ with itself, will give rise to a polynomial relation with rational coefficients among the periods of $X$. 
Grothendieck has never published a precise statement of this conjecture. After early works in the context of 1-motives and their Mumford-Tate groups (by P. Deligne, J.L. Brylinski, ...), Y. Andr\'e wrote down a precise statement for motives $M$ and their \textit{motivic Galois groups} $\Galmot (M)$  recalled in \S \ref{motivicGaloisgroups} (\emph{see} \cite{A04} and  Andr\'e's letter \cite[Appendix 2]{B19} for a nice overview about this beautiful conjecture):

\begin{conjecture}[Grothendieck Period conjecture]\label{eq:CP}
	Let $M$ be a (pure or mixed) motive defined over
	$ {\overline \QQ},$ then
	\[
	\mathrm{tran.deg}_{\QQ}\, {\overline \QQ}
	\big(\mathrm{periods}(M)\big)= \dim \Galmot (M).
	\]
\end{conjecture}
\noindent Here ${\overline \QQ} (\mathrm{periods}(M))$ is the field generated over ${\overline \QQ}$ by the periods of $M$. More generally, for $S$ a set, a vector or a matrix we will denote $K(S)$ the field generated over a field $K$ by the elements or entries of $S$.

In the case of a motive associated to an algebraic variety $X$, the dimension of the motivic Galois group  of this motive is strictly related to the existence of algebraic cycles on $X$ and on the products of $X$ with itself.

 The dimension of the motivic Galois group bounds the transcendence degree from above (\emph{see} Andr\'e's letter \cite[Appendix 2]{B19}), hence Conjecture \ref{eq:CP} is really about a lower bound. 
Andr\'e has proposed the following conjecture which, thanks to results of P. Deligne \cite[Proposition 1.6]{D82} and Andr\'e \cite[\S.0.4]{A17}, is a generalization of Grothendieck Period conjecture \ref{eq:CP}:

\begin{conjecture}[Generalized Period conjecture]\label{eq:GPC}
	Let $M$ be a (pure or mixed) motive defined over a sub-field $K$ of $\CC$, then
	\[
	\mathrm{tran.deg}_{\QQ}\, K  \big(\mathrm{periods}(M)\big) \geq \dim \Galmot (M).
	\]
\end{conjecture}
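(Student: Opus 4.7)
The plan is to work within André's Tannakian formalism. For a motive $M$ defined over $K$, let $\langle M \rangle^\otimes$ be the Tannakian subcategory generated by $M$, equipped with two fibre functors: the algebraic de Rham realization $\HH_{\dR}$ (with values in $K$-vector spaces) and the singular realization $\HH_{\sing}(-,\QQ)$ (with values in $\QQ$-vector spaces). The motivic Galois group $\Galmot(M)$ of Definition \ref{motivic-Galois} is the automorphism group of one of these fibre functors, and the \emph{period torsor} $P(M):=\uIsom^\otimes(\HH_{\dR},\HH_{\sing}\otimes K)$ is a principal homogeneous space under $\Galmot(M)$, so $\dim P(M)=\dim\Galmot(M)$. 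The comparison isomorphism $\beta_M$ of \eqref{eq:betaX} determines a $\CC$-point $p_M\in P(M)(\CC)$ whose coordinates in any pair of $K$-bases are precisely the periods of $M$; in particular $K(\mathrm{periods}(M))$ is the residue field of the scheme-theoretic image of $p_M$ in $P(M)$.

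From this vantage, the conjectural inequality is equivalent to the assertion that the Zariski closure of $p_M$ in $P(M)$, viewed as a $\QQ$-scheme via Weil restriction from $K$, has dimension at least $\dim\Galmot(M)$. In the classical case $K=\overline{\QQ}$ of Conjecture \ref{eq:CP}, this amounts to $p_M$ being Zariski-dense in $P(M)$; the opposite inequality is elementary, since any $\CC$-point of a $K$-scheme of finite type of dimension $d$ generates a field of transcendence degree at most $d$ over $K$. The reduction of the generalized Conjecture \ref{eq:GPC} to the Grothendieck case \ref{eq:CP} is a formal consequence, via Deligne \cite[Proposition 1.6]{D82} and André \cite[\S 0.4]{A17}, of the good behaviour of both sides of the inequality under change of base field and under spread-out/specialization, exploiting that $\Galmot$ can only grow when one restricts to a smaller field of definition while periods remain unchanged.

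The fundamental obstacle — and the reason the present paper uses Conjecture \ref{eq:GPC} as a working hypothesis — is that the reduction does not remove the difficulty: Conjecture \ref{eq:CP} itself is open in essentially every non-trivial situation. The known cases amount to Schanuel's Conjecture (still open) for 1-motives without abelian part, Chudnovsky's theorem for periods of CM elliptic curves, Nesterenko's theorem on Eisenstein series at CM points, and a handful of further sporadic configurations. Producing enough transcendentally independent periods to saturate $\dim\Galmot(M)$ in the general case lies well beyond the reach of current transcendence techniques (Wüstholz's analytic subgroup theorem, zero and multiplicity estimates, periods of Nori motives), so a realistic ``proof proposal'' consists only of the structural reduction above together with a precise identification of what the transcendence-theoretic input would have to deliver — namely, the lower bound on the dimension of the Zariski closure of $p_M$. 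This is precisely the input that the present paper isolates in the semi-abelian setting and converts, via the \emph{Relative Semi-abelian Conjecture} \ref{WSA-V2}, into concrete statements about iterated semi-abelian exponentials and logarithms.
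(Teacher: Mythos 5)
The statement you were asked to address is a \emph{conjecture}, not a theorem: the paper states it as a working hypothesis (due to Andr\'e) and never proves it, nor could it, since it encompasses Grothendieck's Period Conjecture~\ref{eq:CP}, which is wide open. Your write-up recognises this explicitly and, rather than offering a proof, gives the standard Tannakian picture: $\Galmot(M)$ is the automorphism group of a fibre functor on $\langle M\rangle^\otimes$, the period torsor $P(M)=\uIsom^\otimes(\HH_{\dR},\HH_{\sing}\otimes K)$ is a $\Galmot(M)$-torsor of the same dimension, the comparison isomorphism $\beta_M$ of~\eqref{eq:betaX} is a $\CC$-point $p_M$ of it, and the conjectural lower bound is the assertion that $p_M$ is ``as Zariski-generic as possible''. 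This is exactly the framework the paper itself draws on (Section~\ref{motivesandtorsors} and the discussion surrounding Conjecture~\ref{eq:CP}), so there is no genuine gap to flag beyond the one both you and the paper acknowledge --- the conjecture is open and lies beyond current transcendence technology.

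Two points of caution. First, you describe Conjecture~\ref{eq:GPC} as a ``formal consequence'' of the Grothendieck case~\ref{eq:CP} via spread-out and specialisation, citing Deligne~\cite[Proposition~1.6]{D82} and Andr\'e~\cite[\S~0.4]{A17}. The paper invokes these same references in the \emph{opposite} logical direction: it says that~\ref{eq:GPC} is a \emph{generalization} of~\ref{eq:CP}, i.e.\ that the case $K=\oQQ$ of~\ref{eq:GPC} together with the elementary upper bound recovers~\ref{eq:CP}. Whether spreading-out actually yields the implication $\ref{eq:CP}\Rightarrow\ref{eq:GPC}$ is a subtler matter that the paper does not claim; you should not present it as ``formal'' without argument. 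Second, the phrase ``viewed as a $\QQ$-scheme via Weil restriction from $K$'' is not quite right when $K$ is not a finite extension of $\QQ$: Weil restriction is unavailable, and the correct bookkeeping is $\mathrm{tran.deg}_\QQ K(\mathrm{periods}(M)) = \mathrm{tran.deg}_\QQ K + \dim_K\big(\overline{\{p_M\}}\big)$, where the second term is the dimension over $K$ of the Zariski closure of $p_M$ in $P(M)$. Aside from these two imprecisions, your structural account is accurate and faithful to the paper's use of the conjecture.
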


In her PhD thesis, the first author showed that Schanuel conjecture \ref{SC} {\it{is equivalent}} to the Generalized Period conjecture \ref{eq:GPC} applied to 1-motives without abelian part,  $M=[u:{\ZZ}^n \to \GG_m]$, see \cite[Cor 1.3 and \S 3]{B02}.

Here, considering algebraic independence over the field generated by the periods of $G$, we show that the following conjecture is a consequence of the Generalized Period conjecture.

\begin{conjecture}[Relative Semi-abelian conjecture]\label{WSA-V1}
Let $A$ be an abelian variety defined over $\oQQ$. Let $G$ be an extension of $A$ by the torus $\GG_m^s$, which corresponds to a point $Q=(Q_1, \dots, Q_s) \in A^*(\oQQ)^s$, and denote $\Omega_G$ its matrix of periods. If $R=(R_1,\dots,R_n)$ is a point in $G(\CC)^n$ above a point $P=(P_1,\dots,P_n)\in A(\CC)^n$, then 
\[\mathrm{tran.deg}_{\QQ (\Omega_G)}\, \oQQ \big(\Omega_G, R, \tlog_{G^n}(R) \big) \geq 2\dim B_Q + \dim Z(1)\]
where $B_Q$ is $B\cap(A^n\times\{Q\})$ with $B$ the smallest algebraic subgroup of $A^n\times {A^*}^s$ containing the point $(P,Q)$, and $Z(1)$ the algebraic subgroup of $\GG_m^{ns}$ introduced in Construction \ref{B-Z}(3).
\end{conjecture}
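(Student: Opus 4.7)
My plan is to deduce Conjecture \ref{WSA-V1} from the Generalized Period Conjecture \ref{eq:GPC} by applying it to a carefully chosen mixed 1-motive, following the strategy initiated in the first author's thesis \cite{B02} for the purely toric case and extended in \cite{PSS} to the abelian case. To the data $(G,R)$ I associate the 1-motive $M = [u\colon \ZZ^n \to G^n]$ over $K := \oQQ(R)$, where $u(e_i) = R_i$. Its weight filtration has graded quotients $\ZZ^n$ (weight $0$), $A^n$ (weight $-1$) and $\GG_m^{ns}$ (weight $-2$); the extension $G^n$ of $A^n$ by $\GG_m^{ns}$ is encoded by the $n$ copies of $Q \in A^*(\oQQ)^s$.

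Next I would identify the periods of $M$. Following \cite{B02,PSS}, the entries of a period matrix of $M$ are (up to $\oQQ$) the periods $\Omega_G$ of the pure motive $G$ (integrating invariant differentials of all three kinds along closed cycles), the coordinates of $R$ (which lie in $K$), and the components of $\tlog_{G^n}(R)$ (integrating the same differentials along a path from $0$ to $R$). Consequently, $K\big(\mathrm{periods}(M)\big)$ coincides up to algebraic extension with $\oQQ(\Omega_G, R, \tlog_{G^n}(R))$. Applying Conjecture \ref{eq:GPC} to $M$ over $K$ and using additivity of transcendence degree then yields
\[
\mathrm{tran.deg}_{\oQQ(\Omega_G)}\oQQ\big(\Omega_G, R, \tlog_{G^n}(R)\big) \;\geq\; \dim \Galmot(M) - \mathrm{tran.deg}_\QQ \oQQ(\Omega_G).
\]

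The crux of the proof is thus the dimensional identity
\[
\dim \Galmot(M) - \mathrm{tran.deg}_\QQ \oQQ(\Omega_G) \;\geq\; 2\dim B_Q + \dim Z(1).
\]
I would attack this through the weight-filtration exact sequence
\[
1 \to \mathcal{U}_M \to \Galmot(M) \to \Galmot(G^n) \to 1.
\]
The reductive quotient $\Galmot(G^n)$ has the same dimension as $\Galmot(G)$, and since $\Omega_G$ is exactly the set of periods of the pure motive $G$ one has $\mathrm{tran.deg}_\QQ\oQQ(\Omega_G) \leq \dim \Galmot(G)$ (the ``trivial'' inequality, valid unconditionally for pure motives, complementary to GPC). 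Subtracting reduces the problem to the key identity $\dim \mathcal{U}_M = 2\dim B_Q + \dim Z(1)$.

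This last identity is the principal obstacle, and is where the combinatorics of Construction \ref{B-Z} enters. The unipotent radical $\mathcal{U}_M$ sits naturally inside $\uHom(\ZZ^n, G^n)_{\mathrm{Lie}} \cong \Lie G^n$, and its defining algebraic relations are controlled by the smallest algebraic subgroup of $A^n \times {A^*}^s \times \GG_m^{ns}$ through which the motivic class of $M$ factors. The $A$-weight part of $\mathcal{U}_M$ cuts out $B_Q$, contributing $2\dim B_Q$ (the factor $2$ accounting for both Betti and De Rham directions, i.e.\ for $\Lie A_\CC$ and $\overline{\Lie A_\CC}$ in $\tlog$), while the $\GG_m$-weight part, governing the interaction of $u$ with the extension class $Q$ via the biextension, is precisely $Z(1)$ and contributes $\dim Z(1)$. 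Combining these two contributions and feeding back into the chain of inequalities delivers Conjecture \ref{WSA-V1}.
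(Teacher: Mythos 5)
Your route is essentially the one the paper follows in Proposition \ref{lem:equivWSA-V2-GPC}: attach to $(G,R)$ a 1-motive $M$, apply the Generalized Period conjecture \ref{eq:GPC} to $M$ over $\oQQ(R)$, then subtract off the contribution of $\Omega_G$ and reduce everything to a dimension count for the kernel $\mathcal{U}_M=\ker\big(\Galmot(M)\twoheadrightarrow\Galmot(G)\big)$. (The paper works with $M=[\ZZ^n\to G]$, $x_\ell\mapsto R_\ell$, rather than your $[\ZZ^n\to G^n]$; the two generate the same tannakian subcategory $\langle[\ZZ\to G]_1,\dots,[\ZZ\to G]_n\rangle^{\otimes}$, hence have the same motivic Galois group and periods, so the choice is immaterial.) The one genuine small variation is how $\mathrm{tran.deg}_\QQ\,\oQQ(\Omega_G)$ is handled: the paper uses the \emph{equality} $\mathrm{tran.deg}_\QQ\,\oQQ(\Omega_G)=\dim\Galmot(G)$, which is Grothendieck's Period conjecture \ref{eq:CP} applied to $G$ over $\oQQ$, while you invoke only the easy unconditional inequality $\mathrm{tran.deg}_\QQ\,\oQQ(\Omega_G)\le\dim\Galmot(G)$. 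Since both routes already assume GPC this does not affect the strength of the conclusion, but yours runs on one fewer conjectural input, which is a tidy improvement. (Minor: $G$ is a mixed, not pure, motive.)

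Where your proposal stops being a proof is precisely the step you yourself flag as the "principal obstacle," namely $\dim\mathcal{U}_M = 2\dim B_Q + \dim Z(1)$. You state that the weight-$(-1)$ part of $\mathcal{U}_M$ "cuts out $B_Q$" and the weight-$(-2)$ part is $Z(1)$, but offer only a heuristic. These facts are true but need justification; the paper derives them by applying Lemma \ref{eq:DimUR} (resting on \cite[Thm 0.1]{B03}) twice — to $M$, giving $\dim\UR(M)=2\dim B+\dim Z(1)$, and to $M^\vee/\W_{-2}M^\vee=[v^*:Y^\vee\to A^*]$, giving $\dim\UR(G)=2\dim B_{v^*}$ (in particular $\UR(G)$ has no weight-$(-2)$ part) — and then using the $B_O$-torsor identity $\dim B=\dim B_{v^*}+\dim B_Q$ of \eqref{eq:dimB}. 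To turn your sketch into a proof you must either reproduce this computation or explicitly cite \cite[Thm 0.1]{B03} and Lemma \ref{eq:DimUR}; the essential points you are skipping are (i) that $\Gr_{-1}\UR(M)=H_1(B,\QQ)$ and $\W_{-2}\Lie\UR(M)=Z(1)$ as described in Construction \ref{B-Z}, (ii) that the surjection $\UR(M)\twoheadrightarrow\UR(G)$ kills exactly $H_1(B_O,\QQ)$ in weight $-1$ and nothing in weight $-2$.
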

  
 \medskip

Then, our main result is that the Relative Semi-abelian conjecture \ref{WSA-V1} (also cited in the text as conjecture \ref{WSA-V2}) implies the algebraic independence of the values of iterated semi-abelian exponential and the values of iterated generalized semi-abelian logarithms. To state explicitly our main Theorem we use the notion of algebraic independence of fields, which  for algebraically closed fields coincides with that of linear disjointness, \emph{see} for example \cite{L}, \cite{C} and \cite[Lemma 1]{PSS}. 
\begin{definition} Let $F$ be a field and $F_1,F_2$ be two  extensions of $F$ contained in a larger field $L$. The field $F_1$ is said to be \textit{algebraically independent} (or \textit{free}) from $F_2$ over $F$ if any finite set of elements of $F_1$ that is algebraically independent over $F$, remains   algebraically independent over $F_2$ (as a subset of $L$).
\end{definition}
This definition, which seems non-symmetric, is actually symmetric in $F_1$ and $F_2$. 
If $F_1$ and $F_2$ are algebraically closed and algebraically independent over $F$, then $F_1 \cap F_2 = F$.

\begin{notation} 
 As for the field $\QQ$, we will denote with $\overline{K}$ the algebraic closure in $\CC$ of any sub-field $K$ of $\CC$.
\end{notation}

Now, as in Conjecture \ref{WSA-V1}, denote $\Omega_G$ the set of periods of $G$ and consider the following two towers of algebraically closed fields:
\begin{align}
\label{eq:En}
&{\cE}_0=\overline{\QQ}, &{\cE}_n &= \overline{{\cE}_{n-1}\Big(\exp_G(r): r \in \Lie {G_\CC}(\cE_{n-1})\Big)} &&\mathrm{for} \; n \geq 1,\\
\label{eq:Ln}
&{\cL}_0 = \overline{\QQ}, &{\cL}_n &= \overline{{\cL}_{n-1}\Big( \; \widetilde{\log}_G(R): R\in G({\cL}_{n-1})\Big)} &&\mathrm{for} \; n \geq 1.
\end{align}
Let us define
\[ {\cE} = \bigcup_{n \geq 0} {\cE}_n \quad \mathrm{and} \qquad
{\cL} = \bigcup_{n \geq 0} {\cL}_n . \] 
as two fields extensions of $\oQQ$ contained in the larger field $\CC.$ Our main Theorem says:

\begin{theorem}\label{MainThm}
	If the Relative Semi-abelian conjecture \ref{WSA-V1} (see also \ref{WSA-V2}) is true, then $ {\cE}$ and $ {\cL}$ are algebraically independent over $\oQQ.$ 
\end{theorem}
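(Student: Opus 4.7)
The plan is to prove the stronger statement that for every pair of integers $n,m\geq 0$, the fields $\cE_n$ and $\cL_m$ are algebraically independent over $\oQQ$. Since algebraic independence of two field extensions over a common subfield is a finite-character property, passing to the ascending unions $\cE=\bigcup_n\cE_n$ and $\cL=\bigcup_m\cL_m$ then yields the theorem. I proceed by strong induction on $N=n+m$. The case $n=0$ is trivial as $\cE_0=\oQQ$, while the case $m=0$, $n\geq 1$ --- saying that $\cE_n$ is algebraically independent from $\overline{\QQ(\Omega_G)}=\cL_0$ over $\oQQ$ --- is itself substantive and is absorbed into the generic inductive step below.

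For the inductive step, assume the statement for all pairs $(n',m')$ with $n'+m'<N$. Without loss of generality we pass from $(n-1,m)$ to $(n,m)$, i.e.\ we adjoin to $\cE_{n-1}$ the components of $\exp_G(r)$ for $r\in\Lie G_\CC(\cE_{n-1})$. Choose $r_1,\dots,r_p\in\Lie G_\CC(\cE_{n-1})$ such that the components of $R_i:=\exp_G(r_i)$ produce a transcendence basis of $\cE_n$ over $\cE_{n-1}$, and group points $S_1,\dots,S_q\in G(\CC)$ with coordinates in $\cL_m$ whose $\tlog_G$-values control any finite sub-configuration of $\cL_m$ one wishes to test against $\cE_n$. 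Apply Conjecture \ref{WSA-V1} to the concatenation $\mathbf{R}:=(R_1,\dots,R_p,S_1,\dots,S_q)\in G(\CC)^{p+q}$: one obtains
\[
\mathrm{tran.deg}_{\oQQ(\Omega_G)}\,\oQQ\bigl(\Omega_G,\mathbf{R},\tlog_{G^{p+q}}(\mathbf{R})\bigr)\;\geq\;2\dim B_Q+\dim Z(1),
\]
where $B_Q$ and $Z(1)$ are built from $\mathbf{R}$ via Construction \ref{B-Z}. Since $\log_G(R_i)=r_i\in\cE_{n-1}$ and each $\tlog_G(S_j)$ lies by construction in (the algebraic closure of) $\cL_m$, the field on the left is contained in the compositum of $\cE_n$ and $\cL_m$.

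The technical heart of the proof --- and the main obstacle --- is to establish that the algebraic subgroup $B$ and the torus $Z(1)$ attached to $\mathbf{R}$ split additively in dimension as the sum of the analogous objects attached separately to the $R$-block and the $S$-block. By the induction hypothesis the two blocks contribute independent transcendentals to $\cE_n$ and $\cL_m$; applying Conjecture \ref{WSA-V1} separately to each block yields two lower bounds whose sum must be matched by the combined lower bound. Any genuine algebraic relation mixing the $R$- and $S$-blocks would strictly decrease $\dim B$ compared to this product decomposition, producing a gap between the combined lower bound and the sum of the separate ones and thereby contradicting the induction hypothesis. Once the dimensional splitting is verified, the combined lower bound equals the sum of the separate transcendence contributions, forcing
\[
\mathrm{tran.deg}_{\oQQ}\,\oQQ(\cE_n,\cL_m)=\mathrm{tran.deg}_{\oQQ}\,\cE_n+\mathrm{tran.deg}_{\oQQ}\,\cL_m
\]
on any finite sub-configuration, which is exactly the desired algebraic independence. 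Passing to the direct limit over $n,m$ concludes the proof.
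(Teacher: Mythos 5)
Your proposal correctly identifies the overall architecture: concatenate a block of points coming from the $\cE$-tower with a block coming from the $\cL$-tower, apply the Relative Semi-abelian conjecture to the concatenation, and show that the dimensional invariants of the attached algebraic subgroups decompose as the sum of those of the two blocks. This is indeed the paper's strategy (phrased there as a proof by contradiction on a minimal bad pair rather than as a direct strong induction, but the two are equivalent).

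However, the ``dimensional splitting'' --- Lemma \ref{lem:last} in the paper --- \emph{is} the hard content of the theorem, and your argument for it does not go through. You assert that ``any genuine algebraic relation mixing the $R$- and $S$-blocks would strictly decrease $\dim B$ \ldots\ producing a gap between the combined lower bound and the sum of the separate ones and thereby contradicting the induction hypothesis.'' This is circular: the conjecture supplies only \emph{lower} bounds, so if $\dim B_Q$ or $\dim Z(1)$ of the concatenated motive is strictly smaller than the sum, the combined lower bound is simply weaker, and nothing is contradicted --- the induction hypothesis concerns strictly smaller pairs, not the pair at hand. You are also conflating two different kinds of relations: algebraic relations among transcendental numbers (what you want to rule out) and algebraic-group relations among the points $(P,Q)$ (which govern $\dim B$); a priori neither forces the other. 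The paper's proof of Lemma \ref{lem:last} works geometrically with the torsors $\tE_1,\tE_2,\tE_3$: one embeds $\tE_3$ in $\tE_1\times\tE_2$, takes the quotient map $\lambda=(\lambda_1,\lambda_2)$, specializes the point $\mathbf R_3$ to an algebraic point $S=(S_1,S_2)$, observes that $\lambda_1(S_1-\mathbf R_1)$ has coordinates in $\cE_m$ while $-\lambda_2(S_2-\mathbf R_2)$ has coordinates in $\cL_{n-1}$, and then invokes the induction hypothesis on the strictly smaller pair (in the form $\cE_m\cap\cL_{n-1}=\oQQ$) to conclude that this common point lies in $\tE'(\oQQ)$, hence --- by a further specialization-stability argument --- is zero, forcing $\tE_3=\tE_1\times\tE_2$. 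Nothing in your proposal supplies this mechanism, and without it there is no proof. You also omit the upper bound arguments (equations \eqref{eq:=K1} and \eqref{eq:=K2}) that upgrade the conjectural lower bounds on the separate blocks into exact transcendence degrees; without these, matching the combined lower bound with the sum of the two separate lower bounds does not translate into the additivity of transcendence degrees that you need to conclude.
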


Since ${\cE}$ and ${\cL}$ are algebraically closed over $\oQQ $, by \cite[Lemma 1]{PSS} we deduce from the above Theorem \ref{MainThm} that the fields $\cE$ and $\cL$ are linearly disjoint over $\oQQ$.

\medskip

After introducing generalized semi-abelian logarithms in Section \ref{semiabelian} and relevant motives with their motivic Galois groups in Section \ref{motivesandtorsors}, we discuss semi-abelian analogues of Schanuel conjecture in Section \ref{semiabelianSchanuel}. The proof of Theorem \ref{MainThm} is given in Section \ref{proofMainThm} and in Section \ref{application} we study in details the case of semi-elliptic surfaces. We end Section \ref{application} correcting the table of dimensions of the motivic Galois group of the 1-motive $M=[u:\ZZ \rightarrow G]$, where $G$ is an extension of an elliptic curve by $\GG_m$, given in \cite[\S 4]{B19}, see Subsection \ref{Rk-tableau}.

\section*{Acknowledgements}

We would like to thank the referee for her/his enlightening report and the editor for his kind consideration. The authors also thank Daniel Bertrand, Yves Andr\'e and Michel Waldschmidt for very helpful discussions and comments during the preparation of this text. Finally, we thank Pierre Deligne for writing us a clarifying letter.

The first author has worked on this paper during a 2 months stay at the Centro di Ricerca Matematica Ennio De Giorgi: she thanks this institution for the wonderful work conditions. The three last authors thanks the LIA ``Indo-French Program for Mathematics'' of CNRS for supporting their exchanges. The research of the last author is also supported by an IIT Delhi SEED grant.
 

\section{A generalized semi-abelian logarithm}\label{semiabelian}

Let $A$ be a $g$-dimensional abelian variety defined over a sub-field $K$ of $\CC$. Denote by $A_\CC$ the abelian variety obtained from $A$ extending the scalars from $K$ to the field of complex numbers. We will use similar notations $G_\CC$, $\GG_{m,\CC}$ for the other groups appearing.

\subsection{Differential forms of the first kind} Let $\omega_1, \dots , \omega_g $ be invariant differentials of the first kind on $A$ spanning the $g$-dimensional $K$-vector space $\HH^0(A, \Omega^1_A)$ of holomorphic differentials. Consider the abelian exponential $\exp_A:\Lie A_\CC\to A(\CC)$, whose kernel is the lattice $\HH_1(A(\CC),\ZZ)$ of rank $2g$ in the $g$-dimensional complex vector space $\Lie A_\CC$. Denote $z_i$, $i=1,\dots,g$, holomorphic functions on $\Lie A_\CC$, null at $0$, such that
\begin{equation}\label{eq:1stKind}
dz_i = \exp_{A}^{*}(\omega_i),\quad i=1, \dots,g.
\end{equation} 

If $\{\theta_0( z) , \theta_1( z),\dots, \theta_d ( z) \}$ is a basis of the global sections of $L^{\otimes 3}$, with $L$ any positive definite line bundle on $A_\CC$ (that is a basis of theta functions for the factor of automorphy of $L^{\otimes 3}$), we get a projective embedding:
\begin{align}
\nonumber	\exp_{A}:\Lie A_\CC & \longrightarrow  A(\CC) \subseteq \PP^d(\CC)\\
\nonumber	 z & \longmapsto \exp_{A}( z) =[ \theta_0( z): \theta_1( z): \dots: \theta_d( z)].
\end{align}
  
\begin{definition} An \textit{abelian logarithm}, denoted by $\log_A$, is an inverse map of the abelian exponential which is given by integration, more precisely 
\begin{equation}\begin{matrix}\label{eq:log_A}
\log_A: &A (\CC) & \longrightarrow  &\Lie A_\CC\hfill \\
 &P & \longmapsto &\log_A(P) =\Big(\int_O^P\omega_1, \dots ,\int_O^P  \omega_g \Big)
\end{matrix}\end{equation}
 where $O$ is the neutral element for the group law of the abelian variety $A$.
\end{definition}
\noindent There are several abelian logarithms since the map (\ref{eq:log_A}) depends on the path from $O$ to $P$ that we choose.

\medskip

Fix a positive ample (Weil) divisor $D$ on $A$ not containing the origin. According to \cite{We} there exists a unique holomorphic theta function $\theta ( z)$ on $\Lie A_\CC$ whose divisor is $\exp_A^*(D)$, with value $1$ at $0$. By \cite[\S 8 Thm 1]{M74} the map $\phi_D : A \to \mathrm{Pic}^0(A) = A^*, Q \mapsto \mathrm{div} \big( \frac{\theta ( z - q)}{\theta ( z)}\big)$ is surjective, where $q$ is any logarithm of $Q$. For any $\lambda$ in the lattice $\HH_1(A(\CC),\ZZ)$ the theta function satisfies a functional equation
\begin{equation}\label{automorphyfactor}
\frac{\theta( z+\lambda)}{\theta( z)}=\alpha(\lambda)e^{\pi H( z,\lambda)+\frac{\pi}{2}H(\lambda,\lambda)}.
\end{equation}
Here $(H,\alpha)$ are the data associated to the linear class of the divisor $D$,  where $H$ is an Hermitian form on $\Lie A_{\CC}$ satisfying ${\rm Im}(H)(\lambda,\lambda')\subseteq{\ZZ}$ and $\alpha:\HH_1(A(\CC),\ZZ)\to\{\xi\in{\CC};|\xi|=1\}$ satisfies $\alpha(\lambda+\lambda')=\alpha(\lambda)\alpha(\lambda')e^{{\rm i}\pi{\rm Im}(H)(\lambda,\lambda')}$ for $\lambda,\lambda'\in \HH_1(A(\CC),\ZZ)$.
Starting from the above theta function we construct differentials of the second and the third kind on $A$.

\subsection{Differential forms of the second kind}\label{secseckind} For $i=1, \dots,g,$ consider the function
\begin{equation}\label{eq:2ndKind-bis}
 h_i ( z) = \frac{d}{dz_i}\big(\log(\theta( z))\big) = \frac{1}{\theta( z)} \frac{d}{dz_i}\big( \theta( z)\big).
\end{equation}
Its differential $dh_i(z)$ is the pull-back by $\exp_A$ of a meromorphic form on $A$, because the function $h_i(z+\lambda)-h_i(z)$ does not depend on $z$ by \eqref{automorphyfactor}, for any $\lambda\in\ker(\exp_A)$. It also shows that for any $p\in\Lie A_\CC$ the function $h_i(z+p)-h_i(z)$ is periodic with respect to the lattice $\HH_1(A(\CC),\ZZ)$ and it is thus the pull-back of a rational function on $A$. In particular $dh_i(z)$ is a differential form of the second kind and
by formula
\begin{equation}\label{eq:exp-eta}
\exp_A^*(\eta_i)=dh_i(z)
\end{equation}
we define a differential form of the second kind $\eta_i$ on $A$, whose cohomology class in the first De Rham cohomology group $\HH^1_\dR(A)$ of the abelian variety $A$, is invariant by translations.
Observe that $h_i(z+\lambda)-h_i(z) = \int_{\gamma_\lambda}\eta_i$, where $\gamma_\lambda$ is the image by $\exp_A$ of a path from $0$ to $\lambda$ in $\mathrm{Lie}A_{\CC}$. We have that $\eta_1,\dots,\eta_g$ span the space of differential forms of the second kind modulo exact and holomorphic ones. Then, $\omega_1, \dots , \omega_g , \eta_1, \dots , \eta_g$ is a basis of  $\HH^1_\dR(A)$, which is a $2g$-dimensional $K$-vector space. Recall that if $K= \CC$ then $\HH^1_\dR(A)=\HH^0(A, \Omega^1_A) \oplus \HH^1(A, \mathcal{O}_A) $, where $\HH^1(A, \mathcal{O}_A)$ is isomorphic to the  $\CC$-vector space  of differential forms of the second kind modulo holomorphic and exact ones.

\medskip

Differential forms of the second kind on $A$ are closely related to the universal vector extension of $A$. Denote by $A^\natural$ this universal vector extension of the $g$-dimensional abelian variety $A$. The extension $A^\natural$ is not the product $A \times \overline{\Lie A_\CC}$, where $\overline{\Lie A_\CC}$ is the complex conjugate of $\Lie A_\CC$ (\emph{i.e.} the \emph{antiholomorphic} tangent space of $A$ at the origin), but $A^\natural$ is birationally isomorphic to $A \times \overline{\Lie A_\CC}$, with group law defined as
$$(P,W)+_{A^\natural}(P',W') = (P+_AP',W+W'+\mathcal H(P,P')),$$
where $\mathcal H$ is a factor system of the extension $A^\natural$. Then, $\Lie A^\natural_\CC$ is isomorphic to $\Lie A_\CC\times \overline{\Lie A_\CC}$ and the factor system $\mathcal H$ satisfies
$${\mathcal H}\big(\exp_A(z),\exp_A(z')\big) = h(z+z')-h(z)-h(z'),$$
where $h:\Lie A_\CC \to \overline{\Lie A_\CC}$ is given in coordinate system as
\begin{equation}\label{eq:2ndkind-h(z)}
z=\sum_{i=1}^gz_i\frac{\partial}{\partial Z_i} \longmapsto h(z)=\sum_{i=1}^gh_i(z)\frac{\partial}{\partial\overline{Z}_i},
\end{equation}
with the functions $h_i(z)$ introduced in \eqref{eq:2ndKind-bis}. The exponential map of $A^\natural$ is
$$\begin{matrix}
\exp_{A^\natural}: &\Lie A^\natural_\CC &\longrightarrow &A^\natural(\CC)\hfill\\
&(z,w) &\longmapsto &\big(\exp_A(z),w+h(z)\big)\\
\end{matrix}$$
and its kernel is the lattice
\begin{equation}\label{eq:H_1Anatural}
\HH_1(A^\natural(\CC),\ZZ) = \sum_{j=1}^{2g} \ZZ(\omega_{1j},\dots,\omega_{gj},-\eta_{1j},\dots,-\eta_{gj}) \subset \Lie A^\natural_\CC,
\end{equation}
where $\omega_{ij} = \int_{\gamma_j}\omega_i$ and $\eta_{ij} = \int_{\gamma_j}\eta_i$ with $\gamma_1,\dots,\gamma_{2g}$ a basis of $\HH_1(A(\CC),\ZZ)$. This exponential map induces an isomorphism between $\Lie A^\natural_\CC / \HH_1(A^\natural(\CC),\ZZ)$ and $A^\natural(\CC)$. 

Let $\eta^{A^\natural}_i$, $i=1,\dots,g$, be invariant differential forms on $A^\natural$, which induce the usual invariant differential forms on $\overline{\Lie A_\CC}$ and which are defined modulo the addition of the pull-back of a differential form of the first kind on $A$. We complete $z_1,\dots,z_g$ into a basis of $\Lie A^\natural_\CC$ with the complex coordinate functions $w_1,\dots,w_g$ defined by
\begin{equation}\label{eq:exp-xi}
dw_i = \exp_{A^\natural}^*(\eta_i^{A^\natural}).
\end{equation}
The invariant differential form $\eta_i^{A^\natural}$ is  the difference between the pull-back on $A^\natural$ of an invariant differential form on $\overline {\Lie A_\CC}$ and the pull-back  on $A^\natural$ of $\eta_i$. Identifying topologically $A(\CC)$ to $A(\CC)\times\{0\}\subset A^\natural(\CC)$, and choosing a path inside $A(\CC)$ from the neutral element $O$ of $A$ to a point $P\in A(\CC)$, we verify the equality $\int_O^P \eta^{A^\natural}_i = \int_O^P \eta_i$.

\begin{definition}\label{defgenablog}
A \textit{generalized abelian logarithm}, denoted by $\tlog_A$, is a map defined as
\begin{equation}\begin{matrix}\label{eq:tlog_A}
\tlog_A: &A (\CC) & \longrightarrow &\Lie A_\CC\times\overline{\Lie A_\CC} \\
&P & \longmapsto &\tlog_A(P) &\kern-25pt=\Big( \log_A(P), \int_O^P \eta_1, \dots ,\int_O^P \eta_g \Big)\hfill \\
&&&&\kern-25pt=\Big(\int_O^P\omega_1, \dots ,\int_O^P  \omega_g , \int_O^P \eta_1, \dots ,\int_O^P \eta_g \Big)
\end{matrix}\end{equation}
 where $O$ is the neutral element for the group law of the abelian variety $A$.
\end{definition}
\noindent As in the case of abelian logarithms, there are several generalized abelian logarithms depending on the choice of the path of integration. 

\medskip

A logarithm for the universal vector extension $A^\natural$ of $A$ is induced by a generalized abelian logarithm $\tlog_A$ of $A$. Indeed, for $P\in A(\CC)$ one views $\tlog_A(P)$ as an element of $\Lie A_\CC\times\overline{\Lie A_\CC}$, identifying $\big(\int_O^P\eta_1,\dots,\int_O^P\eta_g\big)$ with $h(p)\in\overline{\Lie A_{\CC}}$ by \eqref{eq:2ndkind-h(z)}, then
$$\begin{matrix}
\log_{A^\natural}: &A^\natural (\CC) &\longrightarrow  & \Lie A^\natural_\CC \simeq \Lie A_\CC\times\overline{\Lie A_\CC} &\longrightarrowdbl &\Lie A^\natural_\CC \big/ \HH_1(A^\natural(\CC),\ZZ)\\
&(P,W) &\longmapsto &\tlog_A(P)+(0,W)\\
\end{matrix}$$
gives a locally well-defined inverse map to the complex analytic isomorphism induced by $\exp_{A^\natural}$: 
\[\Lie A^\natural_\CC \big/ \HH_1(A^\natural(\CC),\ZZ) \to A^\natural(\CC). \]

\subsection{Differential forms of the third kind} As explained in \cite[\S 9]{Br} differentials of the third kind on $A$ are closely related to extensions of $A$ by $\GG_m.$ Consider an extension $G$ of our abelian variety $A$ by the torus $\GG_m,$ which is defined over the subfield $K$ of $\CC$. Via the isomorphism $\underline{\Ext}^1(A,\GG_m) \cong \mathrm{Pic}^0(A) = A^*$, it is equivalent to having the extension $G$ or to have a divisor in $\mathrm{Pic}^0(A) $ or to have a point $Q$ in $ A^*(K)$. We now complete the basis $\omega_1,\dots,\omega_g,\eta_1,\dots,\eta_g$ of $\HH^1_\dR(A)$ into a $K$-basis of the first De Rham cohomology group $\HH^1_\dR(G)$.

Let $q$ be a logarithm in $\mathrm{Lie}A^*_{\mathbb C}$ of the point $Q$ and consider the function
\begin{equation}\label{eq:def-Fq}
F_q(z)= \frac{\theta(z+q)}{\theta(z)\theta(q)} e^{-\sum_{i=1}^g h_i(q)z_i},
\end{equation}
which generalizes Serre's function \cite[(1.6)]{B19}, see also Table \ref{tablecorr} in Section \ref{application}. Since $\frac{F_q(z+\lambda)}{F_q(z)}$ is constant in $z$ for any $\lambda\in \HH_1(A(\CC),\ZZ)$, the differential form $d\log(F_q( z))$ is the pull-back via the abelian exponential of a differential form of the third kind $\xi_Q$ on $A$ whose residual divisor is $\phi_D (-Q)$. One checks that the function $\frac{F_q( z+ z')}{F_q( z)F_q( z')}$ induces a rational map from $A^2$ to $\GG_m$, which is a factor system for the extension $G$ of $A$ by $\GG_m$ corresponding to the point $Q$. 
If $Q$ isn't a torsion point of $A^*$, the extension $G$ is not the product $A \times \GG_m$, but $G$ is birationally isomorphic to $A \times \GG_m$ with the group law twisted by the above factor system. The exponential map of $G$ is
\begin{equation}\label{eq:semiabexpi}
\begin{matrix}
\exp_{G}:  &\Lie G_\CC\cong \Lie A_\CC \times \Lie \GG_{m,\CC}\hfill  &\longrightarrow   & A(\CC)\times\CC^\times \cong G(\CC) \hfill\\
&(  z, t) &\longmapsto   &\big(\exp_A(z), e^{t}F_{q}(z)\big).\hfill\end{matrix}\end{equation}
Let $\xi^G_Q$ be an invariant differential form on $G$, which induces the usual invariant differential form on $\GG_m$ and which is defined modulo the addition of the pull-back of a differential form of the first kind on $A$. We complete $z_1,\dots,z_g$ into a basis of $\Lie G_\CC$ with the complex coordinate function $t$ defined by
\begin{equation}\label{eq:exp-xii}
dt = \exp_G^*(\xi^G_Q).
\end{equation}
Note that the pull-back $\tilde\xi_Q$ of the invariant differential form $\frac{dZ}{Z}$ on $\GG_m$, via the rational map $G \cong A \times \GG_m \to \GG_m$, is not invariant on $G$ and it satisfies
\begin{equation*}
\exp_{G}^{*}(\tilde\xi_Q) = \frac{d\big(e^{t}F_{q}(z)\big)}{e^{t}F_{q}(z)} = dt + d\log\big(F_q( z)\big) = dt + \frac{1}{F_q( z)}\sum_{i=1}^g \frac{d}{dz_i}\big(F_q( z)\big) d z_i.
\end{equation*}
We thus have $\xi^G_Q=\tilde\xi_Q-\pi^*\xi_Q$, where $\pi$ is the projection from the extension $G$ to $A$. For $R=(P,e^\ell)\in G(\CC)$ and $\exp_A(p)=P=\pi(R)$ we compute
\begin{equation}\label{eq:3rdKind}
\int_O^R\xi^G_Q = \int_1^{e^\ell}\frac{dZ}{Z} - \int_O^P\xi_Q \equiv \ell-\log\big(F_q(p)\big)
\end{equation}
modulo the kernel of $\exp_G$, since the integrals depend on the choice of a path of integration. Therefore,
\begin{equation*}\begin{matrix}
\log_G: &G (\CC) &\longrightarrow  &\Lie G_\CC \cong \Lie A_\CC \times \Lie \GG_{m,\CC}\hfill \\
&R &\longmapsto &\log_G(R)\hfill &\kern-100pt=\Big(\log_A(P), \int_O^R \xi^G_{Q}  \Big)
\end{matrix}\end{equation*}
is an inverse map of the exponential of $G$.

Consider now an extension $G$ of our abelian variety $A$ by $\GG_m^s,$ which is defined over $K$. Via the isomorphism ${A^*}^s = \mathrm{Pic}^0(A)^s \cong \underline{\Ext}^1(A,\GG_m^s)$, having the extension $G$ is equivalent to having $s$ points $Q_1, \ldots, Q_{s}$ in $ A^*(K)$. A basis of the first De Rham cohomology group  $\HH^1_\dR(G)$ of the extension $G$ is given by $\{\pi^*\omega_1, \dots , \pi^*\omega_g, \pi^*\eta_1, \dots , \pi^*\eta_g, \xi^G_{Q_1}, \dots ,\xi^G_{Q_s}\}$, where $\pi:G\to A$ is the projection underlying the extension $G$.
 
The extension $G$ is birationally isomorphic to $ A \times \GG_m^s $, thus its dimension is $g+s$ and $\Lie G_\CC $ is isomorphic to the $(g+s)$-dimensional complex vector space $\Lie A_\CC \times  (\Lie \GG_{m,\CC})^s.$ In \cite[\S 9]{Br} Bertrand has computed explicitly the semi-abelian exponential map and an embedding for $G$:
\begin{equation}\label{eq:semiabexp}
\begin{matrix}
\exp_{G}:  &\Lie G_\CC  &\longrightarrow   &G(\CC) \subseteq (\PP^{d}\times\PP^s)(\CC)\hfill\\
&(  z, t_1, \dots, t_s) &\longmapsto   &\Big(\exp_A(  z) \; , \; [e^{t_1} F_{q_1}( z): \dots : e^{t_s} F_{q_s}( z) :1 \big]
\Big)\hfill
\end{matrix}\end{equation}
whose kernel is the lattice $\HH_1(G(\CC),\ZZ)$ of rank $2g+s$ of the complex vector space $\Lie G_\CC .$

\begin{definition}\label{eq:log_G} A \textit{semi-abelian logarithm}, denoted by $\log_G$, is an inverse map of the semi-abelian exponential which is given by integration, more precisely 
\begin{equation*}\begin{matrix}
\log_G: &G (\CC) &\longrightarrow  &\Lie G_\CC \kern7pt\cong \Lie A_\CC \times  \Lie \GG_{m,\CC}^s\hfill \\
&R &\longmapsto &\log_G(R)\hfill &\kern-106pt=\Big(\log_A(P), \int_O^R \xi^G_{Q_1}, \dots ,\int_O^R \xi^G_{Q_s}  \Big)\hfill \\
&&&&\kern-106pt=\Big(\int_O^P\omega_1, \dots ,\int_O^P  \omega_g , \int_O^R \xi^G_{Q_1}, \dots ,\int_O^R \xi^G_{Q_s}  \Big)
\end{matrix}\end{equation*}
where $P=\pi(R)$ and $O$ is the neutral element for the group law of the semi-abelian variety $G$ or the abelian variety $A$, accordingly.
\end{definition}
\noindent Again, there are several semi-abelian logarithms since the map $\log_G$ in Definition \ref{eq:log_G} depends on the path from $O$ to $R$ we choose. The inverse map of the complex analytic isomorphism $\Lie G_\CC / \HH_1(G(\CC),\ZZ) \to G(\CC)$ induced by the semi-abelian exponential, is given by $G(\CC) \to \Lie G_\CC / \HH_1(G(\CC),\ZZ), R \mapsto \log_G(R)$ mod $\HH_1(G(\CC),\ZZ)$, which is well defined.

\begin{definition}\label{deftlog_G} A \textit{generalized semi-abelian logarithm}, denoted by $\tlog_G$, is a map defined by
\begin{equation*}\begin{matrix}	
\tlog_G: &G (\CC) & \longrightarrow  &\Lie A_\CC\times\overline{\Lie A_\CC} \times  \Lie \GG_{m,\CC}^s \\
&R & \longmapsto &\tlog_G(R)\hfill &\kern-100pt=\Big(\tlog_A(P), \int_O^R \xi^G_{Q_1}, \dots ,\int_O^R \xi^G_{Q_s} \Big)\hfill  \\
&&&&\kern-100pt =\Big(\int_O^P\omega_1, \dots ,\int_O^P  \omega_g , \int_O^P \eta_1, \dots ,\int_O^P \eta_g, \int_O^R \xi^G_{Q_1}, \dots ,\int_O^R \xi^G_{Q_s} \Big)
\end{matrix}\end{equation*}
 where $P=\pi(R)$ and $O$ is the neutral element for the group law of the semi-abelian variety $G$ or the abelian variety $A$, accordingly.
\end{definition} 
\noindent As before, there are several generalized semi-abelian logarithms depending on the choice of the path of integration.

Given $\gamma_1,\dots,\gamma_{2g}$ a basis of $\HH_1(A(\CC),\ZZ)$, we can lift these paths in $\HH_1(G(\CC),\ZZ)$, as $\tilde\gamma_1,\dots,\tilde\gamma_{2g}$, since topologically $G(\CC) \cong A(\CC) \times (\CC^\times )^s$. We then form \emph{period matrices} associated to $A$ and $G$:
\begin{equation}\label{eq:period-matrix-A-G}
\Omega_A := \Big(\begin{matrix}
\int_{\gamma_j}\omega_i &\int_{\gamma_j}\eta_i\end{matrix}\Big)_{ji}\enspace,\quad
\Omega_G := \left(\begin{matrix}\Omega_A &\big(\int_{\tilde\gamma_j}\xi^G_{Q_k}\big)_{jk} \\[2mm]
0 &2{\rm i}\pi{\rm Id}_{s\times s}\end{matrix}\right),
\end{equation}
where $i$ is running from $1$ to $g$, $j$ from $1$ to $2g$ and $k$ from $1$ to $s$. The entries of the matrix $\Omega_A$ are also written $\omega_{ij}$ and $\eta_{ij}$ in the sequel. They are called the \textit{periods} and \textit{quasi-periods} of $A$, respectively. Note that two generalized semi-abelian logarithms differ by $\ZZ$-linear combinations of entries of $\Omega_G$. 
\begin{remark}\label{rem:splitint}
	The $ns$ complex numbers $\int_O^{P_i}\xi_{Q_k}$ are the abelian integrals of the third kind,
which are computed explicitly in the elliptic case in \cite[Prop 2.3]{B19}. The exponentials of the numbers $\int_{\gamma_j}\xi_{Q_k}$ are called the \textit{quasi-quasi-periods} of $A$. According to formula \eqref{eq:3rdKind} the integral $\int_O^{R_i}\xi^G_{Q_k}$ breaks in two pieces $\int_1^{e^{\ell_i}}\frac{dZ_k}{Z_k}$ and $-\int_O^{P_i}\xi_{Q_k}$, where $R_i=(P_i,e^{\ell_i})$.
\end{remark}

\medskip

For convenience, in this paper we will use small letters for semi-abelian logarithms of points on $G(\CC)$ which are written with capital letters, that is $\exp_{G}(r)=R \in G (\CC)$ for any $r \in \Lie G_\CC$.


\section{1-motives and torsors}\label{motivesandtorsors}
Before we state semi-abelian analogues of Schanuel conjecture in the next section, we recall some basic facts about 1-motives, their motivic Galois groups and related torsors. 


 In this section $K$ is a sub-field of $\CC$.

\subsection{Motivic Galois groups of 1-motives}\label{motivicGaloisgroups}

\begin{definition}\label{def-motive}
A 1-motive $M=[u:X \rightarrow G]$ over an arbitrary scheme $S$ consists of:
\begin{itemize}
	\item  a group scheme $X$ over $S$, defined by a finitely generated free $\ZZ\,$-module, which is a locally constant group scheme for the \'etale topology,
	\item an extension $G$ of an abelian $S$-scheme $A$ by an $S$-torus $T_0$, which is defined over $S$,
	\item a morphism $u:X \to G$ of group schemes over $S$.
\end{itemize}

\end{definition}

In this paper we will work with $S =\mathrm{Spec} (K).$ With this hypothesis
we think of $X$ as the character group of a torus defined over $K$, that is a finitely generated $\gal$-module. 
We identify $X(\oK)$  with a finitely generated free $\ZZ\,$-module, denoted $\ZZ^{{\rk}X}.$
Since any morphism of $K$-schemes can be seen as a $\gal$-equivariant morphism  
of the corresponding $\oK$-schemes, having the group morphism $u:X \to G$ is equivalent to having a $\gal$-equivariant group morphism 
\[u:  X(\oK) \longrightarrow G(\oK)\]
 from $\ZZ^{{\rk} X}$ to the $\oK$-rationnal points of $G$.

There is a more symmetrical definition of 1-motives. In fact, having the 1-motive $M=[u:X \rightarrow G]$ is equivalent to having the 7-tuple $(X, Y^\vee,A ,A^*, v ,v^*,\psi)$ where:
\begin{itemize}
	\item $X$ is the character group of a $n$-dimensional torus;
	\item $Y^\vee$ is the character group of a $s$-dimensional torus $T_0$;
	\item $A^*\simeq \underline{\Ext}^1(A,\GG_m)$ is the Cartier dual of the abelian variety $A$;
	\item $v: X \rightarrow A$ and $v^*:Y^\vee \rightarrow A^*$ are two group morphisms of $K$-group varieties. Having the group morphism $v$ is equivalent to having $n$ points $P_1, \ldots, P_n\in A(K)$ , whereas having the group morphism $v^*$ is equivalent to having $s$ points $Q_1, \ldots, Q_s\in A^*(K)$ which parameterize the isomorphy class of the extension $G$ of $A$ by $T_0$. 
	\item $\psi$ is a trivialization of the pull-back $(v,v^*)^*\mathcal{P}$ via $(v,v^*)$ of the Poincar\'e biextension $\mathcal{P}$ of $(A,A^*)$ by $\GG_m$ (\textit{see} \cite[Chapter 2, \S 2.5, page 37 - 40]{BL}). Having this trivialization $\psi$ is equivalent to having $n$ points $R_1, \ldots, R_n \in G(K)$ whose images via the projection $G \to A$ are the $n$ points $P_1, \ldots, P_n$ respectively. Therefore having $\psi$ is equivalent to having the group morphism $u:X \rightarrow G$. 
\end{itemize}

We denote $Y(1)$ the torus whose character group is $Y^\vee$ and co-character group is $Y$. In particular, with notations as above $T_0=Y(1)$.

\medskip

Any 1-motive $M=[u:X \rightarrow G] $ is endowed with an increasing filtration $\W_{\bullet}$, called the \textit{weight filtration} of $M$:
\begin{equation}\label{eq:weight-filtration}
\W_{0}(M)=M,\quad  \W_{-1}(M)=[0 \to G],\quad \W_{-2}(M)=[0 \to Y(1)].
\end{equation}
If we set ${\Gr}_{n}^{\W} := \W_{n} / \W_{n-1},$ we have 
$${\Gr}_{0}^{\W}(M) = [ X \to 0],\quad
{\Gr}_{-1}^{\W}(M) = [0 \to A],\quad
{\Gr}_{-2}^{\W}(M) = [0 \to Y(1)].$$

Let $x_1,\dots,x_n$ be a base of $X\simeq\ZZ^n$ and $M=[u:X \rightarrow G]$ be a 1-motive defined over $K$ such that $u(x_k)=R_k \in G(K)$, $k=1,\dots,n$, where $G$ is an extension of $A$ by $\GG_m^s$ corresponding to $s$ points $Q_1,\dots,Q_s \in A^*(K)$. Denote by $P_1,\dots,P_n$ the images of the $n$ points $R_1,\dots,R_n$ via the projection $\pi:G\to A$. The periods of $M$ are, by definition, the coefficients of the matrix which represents the isomorphism between the De Rham and the Hodge realizations of $M$. That is $1$, $\omega_{ij}$, $\eta_{ij}$, $\int_{\tilde\gamma_j} \xi^G_{Q_k}, 2 \mathrm i \pi$, $\tlog_G(R_\ell)$ for $i=1, \dots, g$, $j=1, \dots, 2g$, $k=1,\dots,s$ and $\ell=1,\dots,n$, see \cite[Prop 2.3]{B19}, giving with notations \eqref{eq:period-matrix-A-G}:
\begin{equation}\label{eq:OmegaM}
\Omega_M := \left(\begin{matrix}&\tlog_G(R_1)\\ {\rm Id}_n &\vdots\\ &\tlog_G(R_n)\\ 0 &\Omega_G\end{matrix}\right).
\end{equation}

Denote by $\mathcal{MM}_{\leq 1}(K)$ the tannakian sub-category with rational coefficients generated by the 1-motives defined over $K$, in Nori's (tannakian) category of mixte motives, see \cite{N00}. The 1-motive $\ZZ=\ZZ(0):= [ \ZZ \to 0]$ induces the unit object $\QQ(0)$ of $\mathcal{MM}_{\leq 1}(K)$ and the 1-motive $\GG_m=\ZZ(1):= [ 0 \to  \GG_m ]$ induces the object $\QQ(1)$ of $\mathcal{MM}_{\leq 1}(K)$. If $M$ is a 1-motive, we denote by $M^\vee  \cong \uHom (M, \ZZ)$ its dual and by $ev_M : M \otimes M^\vee \to \ZZ$ and $\delta_M:  \ZZ \to M^\vee \otimes M$ the morphisms of $\mathcal{MM}_{\leq 1}(K)$ characterizing this dual. The Cartier dual of $M$ is $M^*= M^\vee \otimes \QQ(1) = (Y^\vee,X,A^*,A,v^*,v,\psi\circ s)$, where $s$ is the morphism which exchange the factors.

\begin{notation}
In this section, the letters $X$, $A$, $A^*$, $T_0$ stands for the 1-motives $[X\to 0]$, $[0\to A]$, $[0\to A^*]$ and $[0\to T_0]$, associated to the $\ZZ$-module $X$, the abelian varieties $A$, $A^*$ and the torus $T_0=Y(1)$, respectively. Then, the sums $+$ and tensor products $\otimes$ are taken in the category $\mathcal{MM}_{\leq 1}(K)$.
\end{notation}

If $M_1$, $M_2$, $M_3$ are 1-motives, we set
\begin{equation}\label{eq:BiextHom}
\Hom_{\mathcal{MM}_{\leq 1}(K)}(M_1 \otimes M_2, M_3):= \mathrm{Biext}^1 (M_1,M_2; M_3)
\end{equation}
where $\mathrm{Biext}^1(M_1,M_2;M_3)$ is the abelian group of isomorphism classes of biextensions of $(M_1,M_2)$ by $M_3$, see \cite[\S10.2]{D75}, \cite[Definition 3.1.1]{B09} and \cite{B09bis,BM09, BT14, BT18, B12, B13, BG, BG2}.
In particular, the isomorphism class of the Poincar\'e biextension $\mathcal{P}$ of $(A,A^*)$ by $\GG_m$ is the Weil pairing of $A$ \cite{D02}:
\begin{equation}\label{eq:Weil-pairing}
\mathcal{W}_A : A \otimes A^* \to \GG_m.
\end{equation}

The tannakian sub-category $\langle M\rangle^\otimes$ generated by the 1-motive $M$ is the full sub-category of $\mathcal{MM}_{\leq 1}(K)$ whose objects are sub-quotients of direct sums of $M^{\otimes \; n}  \otimes M^{\vee \; \otimes \; m}$, and whose fibre functor is the restriction of the fibre functor of $\mathcal{MM}_{\leq 1}(K)$ to $\langle M\rangle^\otimes$. 

\begin{definition}\label{motivic-Galois}
The \emph{motivic Galois group} $\Galmot (M)$ of $M$ is
the motivic affine group scheme ${\rm Sp}( \Lambda),$ where $ \Lambda$ is the commutative Hopf algebra  of $\langle M\rangle^\otimes$ which is universal for the following property: for any object $U$ of $\langle M\rangle^\otimes,$ there exists a morphism  $\lambda_U:  U^{\vee} \otimes U \to \Lambda$ functorial on  $U$, i.e. for any morphism $f: U \to V$ in $\langle M\rangle^\otimes$ and $f^t$ its transpose, the diagram 
 \[\begin{matrix} 
V^{\vee} \otimes U&{\buildrel f^t \otimes 1 \over \longrightarrow}&	U^{\vee} \otimes U \cr
{\scriptstyle 1 \otimes f}\downarrow \quad \quad & & \quad \quad \downarrow{\scriptstyle \lambda_U}\cr
V^{\vee} \otimes V & {\buildrel \lambda_V \over \longrightarrow} & \Lambda
\end{matrix}\]
is commutative, \emph{see} \cite[D\'efinition 8.13]{D90} and \cite[D\'efinition 6.1]{D89}.
\end{definition}

Two 1-motives $M_i=[u_i:X_i \rightarrow G_i]$ over $K$ (for $i=1,2$) are isogeneous if there exists a morphism of complexes $(f_X,f_G):M_1 \to M_2$ such that $f_X:X_1 \to X_2$ is injective with finite cokernel, and $f_G:G_1 \to G_2$ is surjective with finite kernel (\textit{see} \cite{BB}). Since \cite[Construction (10.1.3)]{D75} is true modulo isogenies, two isogeneous 1-motives have the same periods. Moreover, two isogeneous 1-motives build the same tannakian category and so they have the same motivic Galois group. Hence in this paper \textit{we can and will work modulo isogenies}. 
In particular, we identify the abelian variety $A$ with its Cartier dual $A^*$.

\medskip
The weight filtration $\W_{\bullet}$ of $M$, see \eqref{eq:weight-filtration}, induces a filtration on its motivic Galois group $\Galmot (M)$, see \cite[Chapter IV \S 2]{S72}:
\medskip
\begin{equation}\label{eq:weightonGalois}
\begin{matrix}
\hfill\W_{0}(\Galmot(M)) &= &\Galmot(M)\hfill\\[1mm]
\hfill\W_{-1}(\Galmot(M)) &= &\big\{ g \in \Galmot(M) \, \, \vert \, \, (g - id)W_n(M) \subseteq \W_{n-1}(M)\, , \,\forall n\, \big\}\hfill\\[1mm]
\hfill\W_{-2}(\Galmot(M)) &= &\big\{ g \in \Galmot(M) \, \, \vert \, \, (g - id)W_n(M) \subseteq \W_{n-2}(M) \, , \,\forall n\,\big\}\hfill\\[1mm]
\hfill\W_{-3}(\Galmot(M)) &= &\{id\}.\hfill\\[1mm]
\end{matrix}\end{equation}
Clearly $\W_{-1}(\Galmot(M))$  is unipotent. Denote by $\UR (M)$ the unipotent radical of $\Galmot(M)$ and set ${\Gr}_{n}^{\W}(\Galmot(M)) = W_n(\Galmot(M))/W_{n-1}(\Galmot(M))$. In \cite[\S 3]{B19}, the first author has proved: 
\begin{itemize}
	\item ${\Gr}_{0}^{\W}(\Galmot(M)) = \Galmot (X+A+Y(1))$ is reductive. In fact $\Galmot(X+A+Y(1))=\Galmot(A)$, 
	\item $\W_{-1}(\Galmot(M))=\ker \big[\Galmot (M) \twoheadrightarrow  \Galmot (X+A+Y(1)) \big]$ is the unipotent radical $ \UR (M)$ of $\Galmot(M).$ In particular,
\begin{equation}\label{eq:Dim}
\dim \Galmot (M) = \dim \Galmot (A) + \dim \UR (M),
\end{equation}
	\item $\W_{-2}(\Galmot(M))=\ker \big[\Galmot (M) \twoheadrightarrow   \Galmot (N)\big]$ setting
	$$N := (M + M^*) /\W_{-2} (M + M^*),$$
	\item ${\Gr}_{-1}^{\W}(\Galmot(M))$ 
is  the unipotent radical $\W_{-1} \big( \Galmot (N)\big)$  of $\Galmot\big(N\big)$:
\begin{equation} \label{eq:UR}
 0 \to \W_{-2}\big(\Galmot(M)\big) \to \UR(M) \to \UR\big(N\big) \to 0.  
 \end{equation}
\end{itemize}

\medskip

Consider the motive
\[\mathbb{E}=\W_{-1}\big( {\underline {\End}}(X+A+Y(1))\big).\]
which is the direct sum of the pure motives $\mathbb{E}_{-1}:={\Gr}_{-1}^{\W}(\mathbb{E})=(X^\vee \otimes A) + (A^* \otimes Y)$ and $\mathbb{E}_{-2}:={\Gr}_{-2}^{\W}(\mathbb{E})=(X^\vee \otimes Y)(1)$ of weight $-1$ and $-2$ respectively. Recall that we have set $\rk(X)=n$, $\rk(Y^\vee)=s$. As observed in \cite[\S 3.1, p.599]{B03}, the composition of endomorphisms furnishes a ring structure on $\mathbb{E}$ which is given by the morphism $PW: \mathbb{E} \otimes \mathbb{E} \to \mathbb{E}$ of $\langle M \rangle^\otimes$ whose only non trivial component is 
$$\mathbb{E}_{-1} \otimes \mathbb{E}_{-1} \longrightarrow (X^\vee \otimes A) \otimes (A^* \otimes Y) \longrightarrow {\ZZ}(1) \otimes X^\vee \otimes Y = \mathbb{E}_{-2},$$
where the first arrow is the projection from  $ \mathbb{E}_{-1} \otimes \mathbb{E}_{-1}$ to $(X^\vee \otimes A) \otimes (A^* \otimes Y)$ and the second arrow is $ns$ copies of the Weil pairing $\mathcal{W}_A: A \otimes A^* \to \ZZ(1)$ of $A$, see \eqref{eq:Weil-pairing} and \cite[(2.8.4), p.598]{B03}. Because of definition (\ref{eq:BiextHom}) the product $PW:\mathbb{E}_{-1} \otimes \mathbb{E}_{-1} \to \mathbb{E}_{-2}$ defines a biextension $\mathcal{B}$ of $(\mathbb{E}_{-1},\mathbb{E}_{-1})$ by $\mathbb{E}_{-2}$, whose pull-back $d^* \mathcal{B}$ via the diagonal morphism $d:\mathbb{E}_{-1} \to \mathbb{E}_{-1} \times \mathbb{E}_{-1}$ is a $\Sigma - (X^\vee \otimes Y)(1)$-torsor over $\mathbb{E}_{-1}$, see \cite[D\'efinition 5.4]{Breen}. In \cite[Example 2.8]{B03} the biextension $\mathcal{B}$ is constructed explicitly using $ns$ copies of the Poincar\'e biextension $\mathcal{P}$ of $(A,A^*)$ by $\GG_m$, $ns$ copies of the Poincar\'e biextension $\mathcal{P}^*$ of $(A^*,A)$ by $\GG_m$, $n^2$ copies of the trivial biextension of $(A,A)$ by $\GG_m$ and $s^2$ copies of the trivial biextension of $(A^*,A^*)$ by $\GG_m$. By \cite[Lem 3.3, p.600]{B03} the $\Sigma - (X^\vee \otimes Y)(1)$-torsor $ d^* \mathcal{B}$ induces a Lie bracket 
\[ [\cdot,\cdot]: \mathbb{E} \otimes \mathbb{E} \to \mathbb{E}\]
 on $\mathbb{E}$ (an explicit description of this Lie bracket is given in \cite[(2.8.4)]{B03}).
 
Observe that the motives $X^\vee\otimes A$ and $A^n$ coincide, the same remark applies to $A^*\otimes Y$ which coincide with ${A^*}^s$. For the ease of notation we will denote $T$ the motive $(X^\vee\otimes Y)(1)$. Thus the Lie bracket $[\,,\,]$ restricted to $(A^n\times{A^*}^s)\otimes (A^n\times{A^*}^s)$ has image in $T$.

As explained in \cite[\S 3]{B03},
\begin{itemize}
	\item to have the morphisms $v: X \to A$ and $v^*: Y^\vee \to A^*$ underlying the 1-motive $M$, defined by the points $P_1,\dots,P_n\in A(K)$ and $Q_1,\dots,Q_s\in {A^*}(K)$ respectively, is the same thing as to have the morphisms
\begin{equation}\label{defV-V*}
\begin{matrix}
V: &\ZZ &\to &A^n\hfill\\
&\hfill 1 &\mapsto &P:=(P_1,\dots,P_n)
\end{matrix}
\qquad \mbox{and}\qquad 
\begin{matrix}
V^*: &\ZZ &\to &{A^*}^s\hfill\\
&\hfill 1 &\mapsto &Q:=(Q_1,\dots,Q_s),
\end{matrix}
\end{equation}
i.e. to have a point $(P,Q)$ of $ \mathbb{E}_{-1}(K)= (A^n\times {A^*}^s)(K)$. 
	\item having the morphism $u: X \to G$, that is having $n$ points $R=(R_1,\ldots, R_{n})$ of $G$, whose images via the projection $G \to A$ are the points $P_1, \ldots, P_{n}$ respectively, is equivalent to having a point 
\begin{equation}\label{R}
\widetilde{R} \in (d^*\mathcal{B})_{(P,Q)}
\end{equation}
in the fibre of $d^*\mathcal{B}$ over the point $(P,Q)$ of $\mathbb{E}_{-1}(K)=(A^n\times {A^*}^s)(K)$. Recall that a point on a bundle is defined modulo its factor of automorphy. The bundle is split whenever this factor is trivial.
\end{itemize}

\begin{construction}\label{B-Z}
We will consider the following pure motives $B$, $Z'(1)$ and $Z(1)$, associated to the 1-motive $M$:
\begin{enumerate}
\item $B$ is the \textit{smallest} abelian sub-variety (modulo isogenies) of  $A^n\times {A^*}^s$ which contains the point $(P,Q) \in (A^n\times {A^*}^s)(K) $. The pull-back
\begin{equation} \label{eq:idB}
i^*d^* \mathcal{B}
\end{equation}
of $d^* \mathcal{B}$ via the inclusion $i: B \hookrightarrow A^n\times {A^*}^s$, is a $\Sigma-T$-torsor over $B$, see \cite[D\'efinition 5.4]{Breen}. 
\item $Z'$ is the \textit{smallest} $\gal$-sub-module of $X^\vee \otimes Y$  such that the sub-torus $Z'(1)$ of $T$ contains the image of the Lie bracket $[\cdot,\cdot]: B \otimes B \to T$. The push-down ${pr}_*i^*d^* \mathcal{B}$ of the $\Sigma-T$-torsor $i^*d^* \mathcal{B}$ via the projection
$pr:T \twoheadrightarrow T/Z'(1)$ is the trivial $\Sigma-T/Z'(1)$-torsor over $B$, i.e. 
\[{pr}_*i^*d^* \mathcal{B}= B \times(T/Z'(1)).\] Denote $\pi: {pr}_*i^*d^* \mathcal{B} \twoheadrightarrow T/Z'(1)$ the canonical projection. We still denote $\widetilde{R}$ the points of $i^*d^* \mathcal{B}$ and of ${pr}_*i^*d^* \mathcal{B}$ living over $(P,Q) \in B$.
\item $Z$ is the \textit{smallest} $\gal$-sub-module of $X^\vee\otimes Y$ containing $Z'$ and such that the sub-torus $(Z/Z')(1)$ of $T/Z'(1)$ contains $\pi (\widetilde{R})$.
\end{enumerate}
We summarize the construction in the following diagram:
   \begin{equation*}
  \begin{array}{ccccccccc}
  T &= &  T &= &T &\stackrel{pr}{\twoheadrightarrow}&T/Z'(1) &\hookleftarrow&(Z/Z')(1)\\[1mm]
  \circlearrowright && \circlearrowright &&\circlearrowright &&\;\longuparrow\rlap{$\pi$} &&\longuparrow\\[2mm]
  \mathcal{B} &  \hookleftarrow & d^*\mathcal{B}  & \hookleftarrow  & i^*d^*\mathcal{B} &  \longrightarrow &B\times(T/Z'(1)) &\hookleftarrow &B\times(Z/Z')(1)\\[1mm]
   \longdownarrow & &\longdownarrow & &\longdownarrow  &&\longdownarrow &&\longdownarrow\\[1mm]
 ( A^n \times A^{*s})^2 & \stackrel{d}{\hookleftarrow} &  A^n \times A^{*s} &\stackrel{i}{\hookleftarrow} &  B &= &B &= &B\rlap{.}\\
  \end{array}
  \end{equation*}
 \end{construction}

\smallskip

Recall that we denote $\UR (M)$ the unipotent radical of $\Galmot(M)$. By \cite[Thm 0.1]{B03} the Lie algebra $\Lie \UR (M)$ is the extension of $B$ by $Z(1)$ defined by the adjoint action of the Lie algebra $(B,Z(1), [\, , \,])$ over $B+Z(1)$.

\begin{lemma}\label{eq:DimUR}
Let $K\subset\CC$ be algebraically closed. One has ${\Gr}_{-1}^{\W}( \Lie\Galmot(M)) =B  \subseteq A^n\times {A^*}^s$, $W_{-2} ( \Lie\Galmot(M)) =  Z(1) \subseteq T$ and
$$\dim_{\QQ} \UR (M) = 2\dim B + \dim Z(1).$$
In particular,
$$\dim\Galmot(M) = \dim\Galmot(A) + 2\dim B + \dim Z(1),$$
\end{lemma}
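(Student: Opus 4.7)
The plan is to deduce this lemma by taking the Hodge realization of \cite[Thm 0.1]{B03}, which identifies (modulo isogeny) the Lie algebra $\Lie\UR(M)$ with the extension of the pure motive $B$ of weight $-1$ by the pure motive $Z(1)$ of weight $-2$ determined by the adjoint action of the Lie bracket $[\cdot,\cdot]:\mathcal{E}\otimes\mathcal{E}\to\mathcal{E}$. That bracket was built from the Weil pairing of $A$ and the point $\widetilde{R}$ of Construction \ref{B-Z}, and the substantive geometric content of the statement, as well as the main difficulty of the argument, is entirely contained in \cite{B03}; the present lemma is essentially a translation.

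Concretely, I would first recall that the Hodge realization is a faithful exact $\otimes$-functor from $\mathcal{MM}_{\leq 1}(K)$ (with $K\subset\CC$ algebraically closed) to mixed $\QQ$-Hodge structures, sending $\Galmot(M)$ to the Mumford--Tate group $\MT(M)$, which has the same dimension as $\Galmot(M)$. Under this functor the motive of an abelian variety $B$ of complex dimension $\dim B$ has image of $\QQ$-dimension $2\dim B$, and the motive $Z(1)$ of a torus has image of $\QQ$-dimension $\dim Z(1)$. Moreover the motivic weight filtration \eqref{eq:weightonGalois} corresponds under the functor to the Hodge-theoretic weight filtration.

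Applying the Hodge realization to \cite[Thm 0.1]{B03} now yields the first two claims of the lemma, namely ${\Gr}_{-1}^\W(\Lie\Galmot(M))=B\subseteq A^n\times {A^*}^s$ and $W_{-2}(\Lie\Galmot(M))=Z(1)\subseteq\GG_m^{ns}$. Additivity of $\QQ$-dimension in the short exact sequence
\[
0\to Z(1)\to\Lie\UR(M)\to B\to 0
\]
of mixed Hodge structures then gives $\dim_\QQ\UR(M)=2\dim B+\dim Z(1)$. Finally, combining this with the splitting \eqref{eq:Dim} produces the last displayed equality $\dim\Galmot(M)=\dim\Galmot(A)+2\dim B+\dim Z(1)$, completing the plan. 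The only point requiring care is keeping straight the two notions of dimension involved—namely the $\QQ$-dimension of the Hodge realization versus the complex dimension of the underlying algebraic group—which accounts for the factor $2$ in front of $\dim B$ and the absence of such a factor in front of $\dim Z(1)$.
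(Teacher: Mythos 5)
Your proposal is correct and follows essentially the same route as the paper: take the Hodge realization of \cite[Thm 0.1]{B03} (the paper also cites \cite[Cor 3.10]{B03} for the graded pieces), identify the motivic Galois group with the Mumford--Tate group, and count $\QQ$-dimensions in the resulting extension $0\to Z(1)\to\Lie\UR(M)\to B\to 0$, then add $\dim\Galmot(A)$ via \eqref{eq:Dim}. One caveat worth flagging: the assertion that $\MT(M)$ ``has the same dimension as $\Galmot(M)$'' is not a formal consequence of the Hodge realization being a faithful exact $\otimes$-functor --- faithfulness and essential surjectivity only yield the closed immersion $\MT(M)\hookrightarrow\Galmot(M)$. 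The equality of the two group schemes when $K$ is algebraically closed is the substantive result \cite[Thm 1.2.1]{A19} of Andr\'e, which the paper invokes explicitly and which you should cite at that step.
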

\begin{proof}
The computation of the graded parts of $\Lie\Galmot(M)$ is done in \cite[Cor 3.10]{B03}. 

The image of the motivic Galois group $\Galmot(M)$ via the fibre functor ``Hodge realization" of the tannakian category $\langle M\rangle^\otimes$, is the Mumford-Tate group $\mathrm{MT}(M)$. Since $K$ is algebraically closed, by \cite[Theorem 1.2.1]{A19} these two group schemes coincide. We then write 
\begin{equation*}\begin{matrix} 
\hfill\dim_{\QQ} \UR (M) & = &\dim_\QQ \UR (\mathrm{MT}(M)) \hfill\\
& = &\dim_\QQ \Lie \UR (\mathrm{MT}(M)) \hfill\\
& = &\dim_\QQ H_1 (B, \QQ ) +\dim_\QQ H_1(Z(1), \QQ) \hfill\\
& = &2\dim B + \dim Z(1) .\hfill\\
\end{matrix}\end{equation*}
The second equality just combines the above and \eqref{eq:Dim}. \end{proof}

\begin{remark} By definition, the motivic Galois group $\Galmot(M)$ and the Mumford-Tate group $\mathrm{MT}(M)$ of a 1-motive $M$ are  the fundamental groups associated to $M$, considering $M$ as an object of two different tannakian categories: one is Nori's tannakian category of mixed motives and the other is the tannakian category of mixed Hodge structures. In \cite[Theorem 1.2.1]{A19} André proves that, via the fibre functor ``Hodge realization", these two group schemes $\Galmot(M)$ and $\mathrm{MT}(M)$ coincide.
\end{remark}

\subsection{Torsors}\label{torsors}
Let $K\subset\CC$ be algebraically closed, in particular we assume that the torii split. Recall that $G$ is an extension of $A$ by the torus $T_0\simeq\GG_m^s$ defined over $K$, which corresponds to points $Q_1, \dots, Q_s \in A^*(K)$. Consider $n$ points $R_1,\dots,R_n$ of $G(\CC)$ living over the points $P_1,\dots,P_n $ in $A(\CC)$. As usual we will denote $p_\ell$ and $q_k$ logarithms of $P_\ell$ and $Q_k$. As in Construction \ref{B-Z} (1), denote by $B$  the smallest abelian sub-variety $B$ of $A^n \times A^{*s} $ containing the point $(P_1,\dots,P_n,Q_1, \dots, Q_s )$. Restricting the second projection $ A^n \times A^{*s} \rightarrow  A^{*s} $ to $B$ we get a group morphism $  \pi: B \to A^{*s}$. The image of $\pi$ is the smallest abelian sub-variety of $ A^{*s} $ containing the point $Q=(Q_1, \dots, Q_s )$, that we denote
 \begin{equation}\label{Bv*}
 B_{v^*}.
 \end{equation}
Now $\pi: B \rightarrow B_{v^*}$ is a surjective group morphism. If we denote $B_O$ the fibre of $B$ above the neutral element $O$ of $B_{v^*}$, that is
\[B_O = B \cap (A^n \times \{O\}),\]
the abelian variety $B$ is in fact a  $B_O$-torsor over $ B_{v^*}$. Observe that the fibre $B_O$ is itself an abelian variety. The fibre $B_Q$ of $B$ above the point $Q=(Q_1, \dots, Q_s) \in B_{v^*} $, namely
\begin{equation}\label{BQ}
B_Q = B \cap (A^n \times \{Q\}),
\end{equation}
is an algebraic variety but not necessarily a group variety. However it is the translate of the abelian variety $B_O$ by the point $(P,Q)=(P_1,\dots,P_n,Q_1,\dots,Q_s)$:
\begin{align*}
B_Q &= B \cap (A^n \times \{Q\}) \\
&=\big(B+(P,Q)\big)\cap (A^n \times \{Q\}) \\
&=(B\cap (A^n  \times \{O\})) + (P,Q) \\
&= B_O+(P,Q).
\end{align*}
Identifying $A^n \times \{Q\}$ with $A^n$ we get 
\begin{equation}\label{eq:B_QB_O}
B_Q = B_O+P.
\end{equation} 
 Since $B$ is endowed with a structure of $B_O$-torsor over $ B_{v^*}$, we have
 \begin{equation}\label{eq:dimB}
 \dim B = \dim B_{v^*} + \dim B_O = \dim B_{v^*} + \dim B_Q.
 \end{equation}
 
\smallskip
 
\begin{remark}\label{rk:dimB_Q}
	 Let $F=\End( A) \otimes_\ZZ \QQ$ be the (possibly skew) field of endomorphisms of a simple, $g$-dimensional abelian variety $A$. Then:
	\begin{itemize}
	\item  the dimension of $B$ is $g$ times the dimension of the $F$-vector space generated modulo $\sum_{j=1}^{2g}F(\omega_{1j},\dots,\omega_{gj})$ by the points $p_\ell,q_k$, where  $\ell=1,\dots,n$, $k=1,\dots,s$. Similarly, the dimension of $B_{v^*}$ is $g$ times the dimension of the $F$-vector space generated modulo $\sum_{j=1}^{2g} F(\omega_{1j},\dots,\omega_{gj})$ by the $q_k$'s. Then, the dimension of $B_O$ (and $B_Q$) is $g$ times the dimension of the $F$-vector space generated by the images of the $p_\ell$'s modulo $\sum_{j=1}^{2g} F(\omega_{1j},\dots,\omega_{gj})+\sum_{k=1}^sFq_k$.
   \item if we set $\HH_F = \HH_1(A^\natural(\CC),\ZZ)\otimes_\ZZ F$ \eqref{eq:H_1Anatural},
	the dimension of $B$ is also equal to $g$ times the dimension of the $F$-vector space generated by the generalized abelian logarithms $\tlog_{A}(P_1),\dots,\tlog_{A}(P_n)$, $\tlog_{A^*}(Q_1),\dots,\tlog_{A^*}(Q_s)$ modulo $\HH_F$.
		\item  if the $n+s$ points $P_1,\dots,P_n,Q_1, \dots, Q_s $ are $F$-linearly independent, then $\dim B= (n+s)g$, $\dim B_{v^*} = sg$, $\dim B_Q =ng$ and in particular $\dim B_Q$ is maximal. If the points $Q_1, \dots, Q_s $ are $F$-linearly independent and the points $P_1,\dots,P_n$ belong to the $F$-vector space generated by  $Q_1, \dots, Q_s $, then $\dim B= \dim  B_{v^*} = sg$, $\dim B_Q =0$ and in particular $\dim B_Q$ is minimal.
\end{itemize}
\end{remark}

Recall that we assume $K$ algebraically closed so that the torus $(X^\vee\otimes Y)(1)$ is split isomorphic to $\GG_m^{ns}$. As in \eqref{R} and \eqref{eq:idB}, having the $n$ points $R_1, \dots, R_n$ of the extension $G$ is equivalent to having a point $\widetilde{R}$ in the fibre of the  $\GG_m^{ns}$-torsor $i^*d^* \mathcal{B}$ above $(P,Q) \in B$. 
 By (\ref{BQ}) we can consider the pull-back via the inclusion $I_Q^1: B_Q \hookrightarrow B$ of the $\GG_m^{ns}$-torsor $i^*d^* \mathcal{B}$ over $B$: 
 \begin{equation}\label{eq:E'}
 E':={I_Q^1}^*i^*d^* \mathcal{B},
 \end{equation}
which is  a $\GG_m^{ns}$-torsor over $B_Q$. The inclusion $Z \hookrightarrow \ZZ^{ns}$ of modules induces the projection $Pr:\GG_m^{ns} \twoheadrightarrow Z(1)$. Denote by 
  \begin{equation} \label{eq:E}
E:= {Pr}_*E'= {Pr}_*{I_Q^1}^*i^*d^* \mathcal{B}
  \end{equation}
the push-down of the $\GG_m^{ns}$-torsor $E'$ via $Pr$. We have that $E$ is a $Z(1)$-torsor over $B_Q$. We still denote by $\widetilde{R}$ the point of $E$ living over $P\in B_Q$. We can summarize the situation in the following diagram:
   \[
  \begin{array}{ccccccccc}
  \GG_m^{ns} &= &  \GG_m^{ns} &= &\GG_m^{ns} &= &\GG_m^{ns} &\stackrel{Pr}{\longrightarrow}&Z(1) \\[1mm]
  \circlearrowright &     & \circlearrowright  &&  \circlearrowright &&\circlearrowright &&\circlearrowright\\[1mm]
  \mathcal{B} &  \hookleftarrow & d^*\mathcal{B}  & \hookleftarrow  & i^*d^*\mathcal{B} &  \hookleftarrow &E' &  \longrightarrow &E\\
   \longdownarrow & &\longdownarrow & &\longdownarrow  &&\longdownarrow  &&\longdownarrow \\
 ( A^n \times A^{*s})^2 & \stackrel{d}{\hookleftarrow} &  A^n \times A^{*s} &\stackrel{i}{\hookleftarrow} &  B &\stackrel{I_Q^1}{\hookleftarrow}& B_Q &= &  B_Q.\\
  \end{array}
  \]
  
  We have then
  
  \begin{lemma}\label{lem:tildeR}
  	Having the $n$-tuple $R=(R_1, \dots, R_n)$ of points of the extension $G$ is equivalent to having a point $\widetilde{R}$ in the fibre of the  $Z(1)$-torsor $E$ above the point $P$ of $ B_Q$. By construction this $Z(1)$-torsor   $E$  over $B_Q$ is the smallest torsor (with respect to pull-backs and push-downs) defined over $K$ and
containing the point $\widetilde R$.
  \end{lemma}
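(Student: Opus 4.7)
My plan is to track the point $\widetilde R$ through the two-step construction $i^*d^*\mathcal B \leadsto E' \leadsto E$, splitting the statement of the lemma into (a) the equivalence between giving $R=(R_1,\dots,R_n)\in G(\CC)^n$ and giving a point in the fibre of $E$ over $P$, and (b) the minimality of $E$.

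For (a), I would first use that pull-back of a torsor preserves fibres: since $I_Q^1(P)=(P,Q)$, the fibre of $E' = {I_Q^1}^* i^*d^*\mathcal B$ over $P\in B_Q$ coincides canonically, as a $\GG_m^{ns}$-torsor, with the fibre of $i^*d^*\mathcal B$ over $(P,Q)\in B$. By \eqref{R} and \eqref{eq:idB}, giving a point in this fibre is precisely the datum of the $n$ lifts $R_1,\dots,R_n \in G(\CC)$ of $P_1,\dots,P_n$. The push-down $Pr_*E'=E$ then replaces the structure group $\GG_m^{ns}$ by the quotient $Z(1)$, and via the explicit formula of Lemma \ref{trivialisation} I would check that at $P$ no essential information about $\widetilde R$ is lost in this push-down, since the construction of $Z$ in Construction \ref{B-Z}(3) is designed so that $\pi(\widetilde R)\in\GG_m^{ns}/Z'(1)$ already lands inside $(Z/Z')(1)$.

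For (b), I would argue minimality in two parts. On the base, $B_Q=B\cap(A^n\times\{Q\})$ is the smallest possible base because, by \eqref{eq:B_QB_O}, any strictly smaller $K$-defined closed subvariety of $A^n$ containing $P$ would, after translation back and taking product with the $B_{v^*}$-direction, produce a $K$-defined abelian subvariety of $A^n\times {A^*}^s$ strictly smaller than $B$ but still containing $(P,Q)$, contradicting the minimality of $B$ in Construction \ref{B-Z}(1). On the structure group, any $K$-defined torus through which the image of $\widetilde R$ could be further pushed down must first absorb the image of the Lie bracket $[\,,\,]$ restricted to $B\otimes B$, which governs the factor of automorphy of $E'$, and then accommodate the class $\pi(\widetilde R)$ in the residual quotient; by the minimality of $Z'$ and $Z$ built into Construction \ref{B-Z}(2)--(3), the kernel of any such further projection from $\GG_m^{ns}$ is forced to be contained in $\ker(Pr)$, so that the resulting push-down factors through $E$.

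The main obstacle I anticipate is keeping the ``defined over $K$'' hypothesis aligned with the $\gal$-stability of the character modules $Z$ and $Z'$ in $X^\vee\otimes Y$: a $K$-defined subtorus or quotient of $\GG_m^{ns}$ must correspond to a $\gal$-sub-module, and the minimality assertions in Construction \ref{B-Z} are phrased precisely in those terms. Once this Galois compatibility is carefully handled, the rest reduces to routine bookkeeping on torsors over abelian varieties.
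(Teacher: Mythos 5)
The proposal follows the same broad strategy as the paper — establish the correspondence $R\leftrightarrow\widetilde R$ and deduce minimality from the minimality built into Construction~\ref{B-Z} — but it misses the one step the paper's proof actually writes out, and the extra argument it supplies for base minimality has a gap.

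Concretely, the paper's proof spends its first paragraph proving that $B_Q$, and hence $E$, is \emph{defined over $K$}: $B$ and $B_{v^*}$ are $K$-rational, $Q\in A^{*s}(K)$, so $B_Q = B\cap(A^n\times\{Q\})$ is $K$-defined, and therefore so is $E$. Your proposal never establishes this for the base; the closing remark about aligning ``defined over $K$'' with $\gal$-stability only addresses the toric part $Z$, $Z'$. Since ``defined over $K$'' is part of the statement, this omission matters.

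Your contradiction argument for minimality of $B_Q$ also does not go through as written. You claim that a strictly smaller $K$-defined translate $c+C\subsetneq B_Q$ containing $P$ would, ``after translation back and taking product with the $B_{v^*}$-direction,'' give a $K$-defined abelian subvariety of $A^n\times A^{*s}$ strictly smaller than $B$ and still containing $(P,Q)$. But Construction~\ref{B-Z}(1) defines $B$ as the smallest abelian \emph{subgroup} through $(P,Q)$, not the smallest translate. After translating $c+C$ by $-c$ to the subgroup $C$ and forming $C\times B_{v^*}$ (even using the isogeny splitting $B\sim B_O\times B_{v^*}$), the subvariety obtained contains $(P-c,Q)$, not $(P,Q)$; it is $(c,O)+(C\times B_{v^*})$ that contains $(P,Q)$, and that is a $K$-translate rather than a subgroup, so no contradiction with the minimality of $B$ results. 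The paper's own proof simply asserts this minimality of $B_Q$ without argument, so the hole is latent there too — but the attempted filling does not close it.

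Finally, in part (a) the assertion that ``at $P$ no essential information about $\widetilde R$ is lost in the push-down'' because $\pi(\widetilde R)$ lands in $(Z/Z')(1)$ is not the right mechanism for recovering $R$ from $\widetilde R$; the recovery comes from the embedding $E\hookrightarrow G^n$ of Lemma~\ref{lem:DansGn}, which your argument does not cite. The paper silently treats the equivalence as already given by \eqref{R} and \eqref{eq:idB} and focuses its proof entirely on $K$-rationality and minimality.
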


  \begin{proof}
  The abelian sub-varieties $B\subset A^n\times {A^*}^s$ and $B_{v^*}\subset {A^*}^s$ are defined over $K$. Since $Q \in {A^*}^s(K)$, we have that $B_Q$ is defined over $K$. Hence the torsor $E$ is defined over $K$.
  
  Then $B_Q$ is a $B_O$-torsor, it is the smallest torsor defined over $K$ and containing $P$. Furthermore, $Z(1)$ is the smallest sub-torus of $\GG_m^{ns}$ such that the torsor $E$ over $B_Q$, contains $\tilde R$ and is defined over $K$. Therefore, $E$ is the smallest such torsor with respect to pull-backs and push-downs.
  \end{proof}

\medskip

We now want to make a link between the $Z(1)$-torsor $E$ over $B$ and the semi-abelian variety $G$. Since the biextension $\mathcal{B}$ is constructed using $ns$ copies of the Poincar\'e biextension $\mathcal{P}$ of $(A,A^*)$ by $\GG_m$ and $ns$ copies of the Poincar\'e biextension $\mathcal{P}^*$ of $(A^*,A)$ by $\GG_m$, the pull-back $i_Q^*d^*\mathcal{B}$ of the  $\GG_m^{ns}$-torsor $d^*\mathcal{B}$ over $A^n \times A^{*s}$ via the inclusion $i_Q:  A^n \times \{Q\}  \hookrightarrow  A^n \times A^{*s}$ is in fact an extension of $A^n$ by $\GG_m^{ns}$. More precisely, if $v^*:\ZZ^s \rightarrow A^*$ (\emph{resp.} $V^*:\ZZ\to {A^*}^s$ as in \eqref{defV-V*}) is the group morphism defined by $Q=(Q_1, \dots, Q_s)$ which characterizes the extension $G$ of $A$ by $\GG_m^s$, then the extension $i_Q^*d^*\mathcal{B}$ of $A^n$ by $\GG_m^{ns}$ corresponds to the group morphism $(v^*)^n: (\ZZ^{s})^n \rightarrow A^{*n}$ (\emph{resp.} $(V^*)^n:\ZZ^n\to {A^*}^{ns}$) defined by the $n$-tuple $(Q, \dots, Q)$. In particular we have the following equivalent commutative diagrams 
   \[
  \begin{array}{ccc}
  \hfill\ZZ^{s} &\stackrel{v^*}{\longrightarrow}&  A^*\hfill\\[1mm]
  \hfill{\scriptstyle d_{\ZZ^{s}}} \longdownarrow\ \ &     & \longdownarrow{\scriptstyle d_{A^*}} \hfill\\
 \hfill(\ZZ^{s})^n & \stackrel{(v^*)^n}{\longrightarrow} &{A^*}^{n}\hfill\\
  \end{array}
  \qquad \qquad
  \begin{array}{ccc}
  \hfill\ZZ &\stackrel{V^*}{\longrightarrow}&  A^{*s}\hfill\\[1mm]
  \hfill{\scriptstyle d_\ZZ}\longdownarrow  &     & \longdownarrow{\scriptstyle(d_{A^*})^s} \\
  \hfill\ZZ^{n} & \stackrel{(V^*)^n}{\longrightarrow} &{A^*}^{ns}\hfill\\
  \end{array}
  \]
where we have denoted by $d_{\ZZ^{s}}, d_{\ZZ}, d_{A^*}$ the diagonal morphisms. Therefore the extension $i_Q^*d^*\mathcal{B}$ of $A^n$ by $\GG_m^{ns}$ is just $G^n:$
   \[i_Q^*d^*\mathcal{B} = G^n .\]
By (\ref{BQ}) we have an inclusion $I_Q^2: B_Q \hookrightarrow A^n \times \{Q\}\simeq A^n$ and we can consider the pull-back of the extension $G^n$ via this inclusion. This pull-back ${I_Q^2}^*G^n$ is  a $\GG_m^{ns}$-torsor over $B_Q$. Since the diagram of inclusions 
\[\begin{matrix}
\hfill B_Q &\stackrel{I^1_Q}{\hookrightarrow} &B\hfill\\[1mm]
\hfill\scriptstyle I^2_Q\ \hookdownarrow\ \ &&\ \hookdownarrow\scriptstyle\ i\hfill\\
\hfill A^n\times\{Q\} &\stackrel{i_Q}{\hookrightarrow} &A^n\times{A^*}^s\hfill\\
\end{matrix}\]
is commutative,
we have
  \[ {I_Q^2}^* G^n = {I_Q^2}^*i^*_Qd^* \mathcal{B} =   {I_Q^1}^*i^*d^* \mathcal{B} =E' .\]  
If we denote by $In:Z(1) \hookrightarrow \GG_m^{ns}$ the inclusion such that the composition $Pr \circ In$  with the projection $Pr:\GG_m^{ns} \twoheadrightarrow Z(1)$ is the identity, we can summarize the situation with a diagram:
   \begin{equation}\label{eq:E'Gn}
  \begin{array}{ccccccccc}
  \GG_m^{ns} &= &\GG_m^{ns} &= &(\GG_m^{s})^n &= &\GG_m^{ns} &\stackrel{In}{\hookleftarrow} &Z(1)\\[1mm]
  \circlearrowright & &\circlearrowright & &\hookdownarrow & &\circlearrowright & &\circlearrowright\\[1mm]
  \mathcal{B} &\hookleftarrow &d^*\mathcal{B} &\hookleftarrow  &G^n &\hookleftarrow &{I_Q^2}^* G^n=E' &  &E\\
  \longdownarrow & &\longdownarrow & &\longdownarrow & &\longdownarrow & &\longdownarrow\\
  (A^n\times A^{*s})^2
  &\stackrel{d}{\hookleftarrow} &  A^n\times A^{*s} &\stackrel{i_Q}{\hookleftarrow} &A^n\times\{Q\} &\stackrel{I_Q^2}{\hookleftarrow} &B_Q &= &B_Q.\\
  \end{array}
  \end{equation}
  
  \begin{lemma}\label{lem:DansGn}
  The torsor $E$ is embedded inside $G^n$.
  \end{lemma}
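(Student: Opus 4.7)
The plan is to factor the desired inclusion $E \hookrightarrow G^n$ through $E'$, using the right-hand portion of diagram \eqref{eq:E'Gn}.

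First, I would realize $E'$ as a closed subvariety of $G^n$. By the middle of diagram \eqref{eq:E'Gn} we have $E' = {I_Q^2}^*G^n$, which is the fiber product of $B_Q$ and $G^n$ over $A^n$ (identifying $A^n\times\{Q\}$ with $A^n$ via the second projection). By the universal property of the pullback, this fiber product is canonically isomorphic to the preimage $(\pi^n)^{-1}(B_Q)$ under the projection $\pi^n: G^n \twoheadrightarrow A^n$ underlying $G^n$, and this preimage sits as a closed subvariety of $G^n$, inheriting the $\GG_m^{ns}$-torsor structure over $B_Q$. That this embedding is defined over $K$ follows from the $K$-rationality of $B_Q$, already established in the proof of Lemma \ref{lem:tildeR}.

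Second, I would exhibit $E$ as a closed subvariety of $E'$. The identity $E' = In_*Pr_*E' = In_*E$, obtained just before the statement of the lemma from the fact that $In$ is a section of $Pr$, says that the $\GG_m^{ns}$-torsor $E'$ is the extension of structure group of the $Z(1)$-torsor $E$ along the closed immersion $In: Z(1) \hookrightarrow \GG_m^{ns}$. Concretely, $E' \cong E \times^{Z(1)}\GG_m^{ns}$, and the canonical map $E \hookrightarrow E \times^{Z(1)}\GG_m^{ns}$ sending $e \mapsto [e,1]$ is a $Z(1)$-equivariant closed immersion over $B_Q$, defined over $K$ because $Z(1)$ and its inclusion $In$ into $\GG_m^{ns}$ are defined over $K$ by the construction of $Z$ as a $\gal$-submodule in Construction \ref{B-Z}(3).

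Composing the two inclusions gives $E \hookrightarrow E' \hookrightarrow G^n$, which is the content of the lemma. The only delicate point is checking that the map $E \hookrightarrow E'$ coming from the splitting $In$ is actually an embedding of schemes (and not just a morphism of torsors); this is immediate from the fact that $In$ realizes $Z(1)$ as a closed subgroup of $\GG_m^{ns}$, so the induced map on contracted products is a closed immersion. I do not foresee any substantive obstacle beyond this bookkeeping.
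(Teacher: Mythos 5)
Your proposal is correct and takes essentially the same route as the paper: the paper likewise factors the inclusion as $E\hookrightarrow E'\hookrightarrow G^n$, getting $E'\hookrightarrow G^n$ from diagram \eqref{eq:E'Gn} and $E\hookrightarrow E'$ from $In_*E=E'$. You simply spell out these two steps (pullback as preimage, push-forward as contracted product) more explicitly; the only slip is that $A^n\times\{Q\}$ is identified with $A^n$ via the first, not the second, projection.
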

  
\begin{proof}
  Denote by $U$ a complement to the sub-module $Z$ of $\ZZ^{ns}$. In particular $\GG_m^{ns} = Z(1) \times U(1)$. The morphism $Pr:\GG_m^{ns} \twoheadrightarrow Z(1)$ is the projection to the first factor. By \cite[Expos\'e VII, 1.4, (1.4.4)]{SGA7} $\HH^1(B_Q,\GG_m^{ns} ) \cong \Ext^1( \ZZ[B_Q],\GG_m^{ns})$ and since $\Ext^1( \ZZ[B_Q],\GG_m^{ns})$ is additive in the variable $\GG_m^{ns}$, we have that $\HH^1(B_Q,\GG_m^{ns} ) \cong \HH^1(B_Q, Z(1) ) \times \HH^1(B_Q, U(1) ).$ In particular, the $\GG_m^{ns}$-torsor $E'$ is isomorphic to the torsor  $In_*E \times E''$, where $E''$ is the push-down (via the inclusion $ U(1) \hookrightarrow \GG_m^{ns}$) of a $U(1)$-torsor. By the diagram \eqref{eq:E'Gn} we have an inclusion of the $\GG_m^{ns}$-torsor $E'$ inside $G^n$. Therefore we get inclusions of torsors $E \hookrightarrow In_*E  \hookrightarrow E'  \hookrightarrow G^n$. 
\end{proof}

Via the inclusion of $E$ in $G^n$ furnished by the above lemma, the points $\tilde R$ and $R=(R_1,\dots,R_n)$ can be identified. Denote by $\tE$ the pull-back via the translation by $R$, $\Tr_R: G^n \to G^n$, $S\mapsto S+R$, of the $Z(1)$-torsor $E$ over $B_Q$:
  \begin{equation}\label{def:tE}
  \tE := \Tr_R^* E = E-R.
  \end{equation}
 It is an extension of the abelian variety $B_O$ by the torus $Z(1)$. 
  
\begin{remark}\label{rk:defEtE}
   Since $B$ is defined over $K$, the fibre $B_O$ is defined over $K$ and so is the extension $\tE$.
\end{remark}

 
 \section{Semi-abelian analogues of Schanuel conjecture}\label{semiabelianSchanuel}
 
 We first write down various special cases of the Generalized Period conjecture applied to semi-abelian varieties, which extend the famous Schanuel conjecture. Then we consider relative versions of these conjectures above the fields generated by the periods of the semi-abelian varieties. We dub these as weakening of the previous statements. In this section $K$ is an algebraically closed sub-field of $\CC$.

\subsection{Extended Schanuel conjectures.}

Consider the well known Schanuel conjecture:

\begin{conjecture}[Schanuel conjecture] \label{SC}
If $x_1,\ldots , x_n$ are complex numbers which are $\QQ$-linearly independent, then  
\[
\mathrm{tran.deg}_{\QQ}\, \QQ  \big(x_1,\ldots , x_n, e^{x_1}, \ldots , e^{x_n} \big)\geq n.
\]
\end{conjecture}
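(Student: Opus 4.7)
Schanuel's conjecture is one of the central open problems of transcendental number theory, so I cannot propose an unconditional proof. What I can sketch is the reduction, due to the first author in \cite[Cor.~1.3 and \S 3]{B02} and recalled explicitly in the introduction above, of Conjecture \ref{SC} to a special case of the Generalized Period conjecture \ref{eq:GPC}, i.e.\ to the same tannakian machinery that will be the workhorse of the rest of the paper.

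Given $\QQ$-linearly independent complex numbers $x_1,\ldots,x_n$, set $\alpha_i=e^{x_i}$ and $K=\overline{\QQ(\alpha_1,\ldots,\alpha_n)}$, and introduce the toric 1-motive
\[ M \;=\; [\,u\colon \ZZ^n \longrightarrow \GG_m^n\,] \]
over $K$, with $u(e_i)=(1,\ldots,1,\alpha_i,1,\ldots,1)$. Its weight filtration \eqref{eq:weight-filtration} has $\Gr_0^{\W}=\ZZ^n$ and $\Gr_{-2}^{\W}=\GG_m^n$, with no abelian part, so the reductive quotient of $\Galmot(M)$ is $\Galmot(\GG_m^n)=\GG_m$ (a single copy, acting by the cyclotomic character on all factors). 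The purely toric analogue of Lemma \ref{eq:DimUR} identifies the unipotent radical $\UR(M)$ with the smallest $\QQ$-subspace of $\Lie\GG_m^n(\CC)\cong\CC^n$ containing $(x_1,\ldots,x_n)$ modulo the lattice $2\pi{\rm i}\,\ZZ^n$; the $\QQ$-linear independence of the $x_i$ forces this dimension to be exactly $n$, so that $\dim\Galmot(M)=n+1$.

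The periods of $M$ are $1$, $2\pi{\rm i}$ and $x_1,\ldots,x_n$ (the integrals of ${\rm d}Z/Z$ along chosen paths from $1$ to the $\alpha_i$). Applying \ref{eq:GPC} to $M$ over $K$ yields
\[ \mathrm{tran.deg}_{\QQ}\, K\bigl(2\pi{\rm i},\,x_1,\ldots,x_n\bigr) \;\geq\; n+1, \]
and since $K\subseteq\overline{\QQ(\alpha_1,\ldots,\alpha_n)}$, removing the one additional transcendental contributed by $2\pi{\rm i}$ (and absorbing the edge case where some $x_i$ is a rational multiple of $2\pi{\rm i}$ by a trivial separate argument on $\pi$) gives
\[ \mathrm{tran.deg}_{\QQ}\,\QQ(x_1,\ldots,x_n,\alpha_1,\ldots,\alpha_n)\;\geq\; n, \]
which is Schanuel.

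The main, and structurally insurmountable, obstacle is of course that \ref{eq:GPC} is itself open: the above is only a reduction between two equally deep conjectures. The one genuinely unconditional ingredient is the dimension computation $\dim\Galmot(M)=n+1$, which is the actual content of \cite{B02} in the toric case and the template on which the semi-abelian analogues developed later in this paper are built.
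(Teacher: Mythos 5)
This is Schanuel's conjecture, which the paper states as an open conjecture and does not prove: the only ``proof'' the paper offers is the citation to \cite[Cor.~1.3 and \S~3]{B02} for the \emph{equivalence} of Conjecture \ref{SC} with the Generalized Period conjecture \ref{eq:GPC} applied to a 1-motive $[u:\ZZ^n\to\GG_m]$. You have correctly recognised that no unconditional proof is available, and your sketch of the reduction from \ref{eq:GPC} to \ref{SC} is in the same spirit as \cite{B02}.

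Two points of comparison with the reference the paper actually invokes. First, \cite{B02} uses the Kummer 1-motive $M=[u:\ZZ^n\to\GG_m]$ with $u(e_i)=\alpha_i$, whereas you use $[u:\ZZ^n\to\GG_m^n]$ with $u(e_i)$ placing $\alpha_i$ in the $i$th slot; the two generate the same tannakian subcategory (each is a summand of a power of the other up to isogeny), so $\Galmot$ is the same, but the single-$\GG_m$ version is the one matching the formula $\dim\Galmot(M)=\dim\Galmot(A)+2\dim B+\dim Z(1)$ of Lemma \ref{eq:DimUR} with $A=0$, $B=0$, $s=1$. Second, your treatment of the $2\pi\mathrm{i}$ edge case is slightly off as written: the dimension $\dim Z(1)$ is the dimension of the $\QQ$-span of $x_1,\dots,x_n$ \emph{modulo} $2\pi\mathrm{i}\,\QQ$, so it equals $n$ when $2\pi\mathrm{i}\notin\sum\QQ x_i$ (and then $\dim\Galmot(M)=n+1$, and subtracting $2\pi\mathrm{i}$ costs one), but it can drop to $n-1$ when $2\pi\mathrm{i}\in\sum\QQ x_i$ (and then $\dim\Galmot(M)=n$, and $2\pi\mathrm{i}$ is already inside $\QQ(x_1,\dots,x_n)$ so subtracting it costs nothing). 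The two cases compensate exactly, which is the actual content of the ``equivalence'' in \cite{B02}; calling it a separate ``trivial argument on $\pi$'' understates where the balance comes from. With these adjustments your outline matches the cited reduction.
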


In \cite[Cor 1.3 and \S 3]{B02} the first author showed that the Schanuel conjecture {\it{is equivalent}} to the Generalized Period conjecture \ref{eq:GPC} applied to a 1-motive defined over $K = \overline{\QQ (e^{x_1}, \ldots , e^{x_n})}$ without abelian part, that is $[u:{\ZZ}^n \to \GG_m]$.

\medskip

The elliptic analogue of Schanuel conjecture {\it{is}}
the Generalized Period conjecture \ref{eq:GPC} applied to the 1-motive $[u:{\ZZ}^n \to E]$ defined over $K$ and with $E$ an elliptic curve, see \cite[Thm 1.2]{B02}.

\begin{conjecture}[Elliptic analogue of Schanuel conjecture]
Let $E$ be an elliptic curve, defined over $\CC$, and with field of endomorphisms $ F=\End(E) \otimes_\ZZ \QQ.$ Let $\omega_{1},\omega_{2},\eta_{1},\eta_{2}$ be the periods and quasi-periods of $E$, respectively. Let $g_{2}=60 \; \mathrm{G}_{4}$ and $g_{3}=140 \; \mathrm{G}_{6}$, where $\mathrm{G}_{4}$ and $\mathrm{G}_{6}$ are the Eisenstein series of weight 4 and 6. Let $\wp(z)$ and $\zeta(z)$ be the Weierstrass functions relative to the lattice $ \HH_1(E(\CC),\ZZ)$). If $p_{1},\dots,p_{n}$ are $n$ points of $\Lie E_\CC/(F\omega_1+F\omega_2)$ which are $F$-linearly independent, then the transcendence degree over $\mathbb Q$ of the field 
\[
\QQ \big(g_{2},g_{3}, \wp(p_1), \dots,\wp(p_{n}),\omega_{1},\omega_{2},\eta_{1}, \eta_{2},  p_1, \dots,p_{n},\zeta(p_1), \dots, \zeta(p_{n})\big)\]
is at least $2 n + \frac{4}{\dim_{\QQ} F}$.
\end{conjecture}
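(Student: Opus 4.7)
The plan is to recognize that the stated conjecture is the Generalized Period conjecture \ref{eq:GPC} unwound for the 1-motive $M=[u:\ZZ^n\to\cE]$ with $u(x_\ell)=P_\ell$, as announced in the sentence preceding the statement (and worked out in \cite[Thm~1.2]{B02}). The task thus splits into matching the two sides of the inequality in \ref{eq:GPC} with the two sides of the conjecture.

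For the left-hand side I would read off the periods of $M$ from \eqref{eq:OmegaM} specialized to $G=\cE$ (no torus, $s=0$). They comprise the entries of $\mathrm{Id}_n$, the periods and quasi-periods $\omega_1,\omega_2,\eta_1,\eta_2$ of $\cE$, and for each $\ell$ the generalized elliptic logarithm $\tlog_\cE(P_\ell)=\bigl(p_\ell,\int_O^{P_\ell}\eta\bigr)$. The classical identity $\zeta'=-\wp$ together with the definition of $\eta$ in Section~\ref{semiabelian} identifies $\int_O^{P_\ell}\eta$ with $-\zeta(p_\ell)$ modulo elementary terms linear in $p_\ell,\omega_i,\eta_i$. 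The coefficients $g_2,g_3$ lie in the field of definition $K$ of $\cE$, and the affine coordinates $\wp(p_\ell),\wp'(p_\ell)$ of $P_\ell$ appear because $M$ carries the datum $u(x_\ell)=P_\ell$. Hence, up to algebraic dependence, $K(\mathrm{periods}(M))$ coincides with the field appearing in the left-hand side of the conjecture.

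For the right-hand side I would apply Lemma \ref{eq:DimUR}: since $s=0$ one has $Z(1)=0$, and
\[\dim\Galmot(M)=\dim\Galmot(\cE)+2\dim B,\]
where $B$ is the smallest abelian sub-variety of $\cE^n$ containing $(P_1,\dots,P_n)$. Identifying $\Galmot(\cE)$ with the Mumford-Tate group via \cite[Thm~1.2.1]{A19}, the classical computation of $\mathrm{MT}(\cE)$ (equal to $\mathrm{GL}_2$ in the non-CM case and to a $2$-dimensional torus in the CM case) yields $\dim\Galmot(\cE)=4/\dim_\QQ F$. The $F$-linear independence of $p_1,\dots,p_n$ modulo $F\omega_1+F\omega_2$ forces $B=\cE^n$ by the last bullet of Remark \ref{rk:dimB_Q}, whence $\dim B=n$, and finally
\[\dim\Galmot(M)=2n+\frac{4}{\dim_\QQ F},\]
matching the right-hand side. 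The step needing most care is the analytic-to-motivic dictionary for the quantities $\wp(p_\ell),\wp'(p_\ell),\zeta(p_\ell)$ versus the columns of $\Omega_M$; once in place, the conjecture follows by direct substitution in \ref{eq:GPC}. There is no genuine obstacle here since this is a concrete translation of the general framework to the particular 1-motive $[u:\ZZ^n\to\cE]$.
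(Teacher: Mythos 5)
Your proposal matches the paper's own justification: the paper states (citing \cite[Thm~1.2]{B02}) that this conjecture \emph{is} the Generalized Period conjecture~\ref{eq:GPC} applied to $M=[u:\ZZ^n\to\cE]$, and your matching of the two sides — periods of $M$ via~\eqref{eq:OmegaM} with $s=0$ and $\tlog_\cE(P_\ell)$ versus $(p_\ell,\zeta(p_\ell))$ on the left, and $\dim\Galmot(M)=\dim\Galmot(\cE)+2\dim B=4/\dim_\QQ F+2n$ via Lemma~\ref{eq:DimUR} and Remark~\ref{rk:dimB_Q} on the right — is precisely that translation. The only cosmetic caveat is that GPC is over $K(\mathrm{periods}(M))$ with $K=\overline{\QQ(g_2,g_3,\wp(p_\ell),\wp'(p_\ell))}$ rather than $\QQ(\dots)$, but as you note these have the same transcendence degree over $\QQ$ since $\wp'(p_\ell)$ is algebraic over $\QQ(g_2,g_3,\wp(p_\ell))$.
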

The only known case of this conjecture is proved by Chudnovsky in \cite[Chap.7, Cor.5.4]{Ch}, for $n=0$ and $E$ with complex multiplications.

\medskip
 
 The abelian analogue of Schanuel conjecture {\it{is}} the Generalized Period conjecture \ref{eq:GPC} applied to a 1-motive defined over $K$ without toric part, that is $[u:{\ZZ}^n \to A]$ with $A$ an abelian variety, \emph{see} \cite{V18}. Before we state it, we establish a lemma computing the dimension of an abelian variety in terms of dimensions of vector spaces.

Let $A$ be an abelian variety, defined over $K$ and of dimension $g$. Poincar\'e's complete reducibility theorem \cite[Chap.IV, \S19, Thm.1]{M74} allows to write $A\sim A_1^{r_1}\times \dots \times  A_m^{r_m}$ up to isogeny, with the $A_i$'s simple, defined over $K$ and pairwise non-isogenous. Denote $g_i=\dim A_i$, $F_i=\End(A_i)\otimes_\ZZ\QQ$, $\pi_{ij}:A\to A_i$, $i=1,\dots,m$, $j=1,\dots,r_i$, the projections of $A$ on the $j$-th factor $A_i$ of $A_i^{r_i}$ and $\pi_i=(\pi_{i1},\dots,\pi_{ir_i})$ the projection of $A$ on $A_i^{r_i}$.

\begin{lemma}\label{lem:Poincarereducibility}
With these notations, the dimension of the smallest abelian subvariety of $A^n$ containing a given point $P=(P_1,\dots,P_n)$ of $ A^n(\CC) $ is $\sum_{i=1}^{m}n_ig_i$, where $n_i$ is the dimension of the $F_i$-vector space generated in $A_i$ by the points $\pi_{i1}(P_1),\dots,\pi_{ir_i}(P_1),\dots,\pi_{i1}(P_n),\dots,\\ \pi_{ir_i}(P_n)$.
\end{lemma}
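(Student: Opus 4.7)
My plan is to proceed in two steps: first reduce via Poincar\'e's complete reducibility theorem to the case of a single simple factor, then use the \emph{Morita}-type correspondence between abelian subvarieties of a power $A_i^N$ (up to isogeny) and $F_i$-subspaces of $F_i^N$ to read off the dimension from linear algebra on the coordinates of the point.

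For the reduction, I would note that $\End(A^n)\otimes_\ZZ\QQ \cong \prod_{i=1}^m M_{nr_i}(F_i)$, since $\Hom(A_i,A_j)\otimes\QQ=0$ for $i\ne j$ by simplicity and non-isogeny. Abelian subvarieties of $A^n$ up to isogeny correspond to idempotents (modulo conjugation) in this algebra, and since an idempotent in a product ring is a tuple of idempotents in the factors, every abelian subvariety $B\subset A^n$ decomposes up to isogeny as $B\sim B^{(1)}\times\cdots\times B^{(m)}$ with $B^{(i)}\subset A_i^{nr_i}$. As the $A_i$'s are pairwise non-isogenous, $B$ contains $P$ if and only if each $B^{(i)}$ contains the projection $P^{(i)}\in A_i^{nr_i}$ of $P$, so minimality of $B$ is equivalent to minimality of each $B^{(i)}$, and $\dim B=\sum_i\dim B^{(i)}$. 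This reduces the lemma to the simple case, where I must show that the smallest abelian subvariety of $A^N$ containing a point $P'=(P'_1,\dots,P'_N)$ has dimension $g\cdot n'$, with $n'$ the $F$-dimension of the subspace of $A(\CC)\otimes_\ZZ\QQ$ generated by $P'_1,\dots,P'_N$.

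For this simple case, I would use $\End(A^N)\otimes\QQ=M_N(F)$ to set up a bijection between abelian subvarieties of $A^N$ (up to isogeny) and $F$-subspaces $V\subset F^N$: an $F$-subspace with basis $v_1,\dots,v_k$ (viewed as rows in $F^N$, cleared of denominators) corresponds to the image $B_V$ of the morphism $A^k\to A^N$, $(a_1,\dots,a_k)\mapsto \sum_j v_j\cdot a_j$, which has dimension $g\dim_F V$. An abelian subvariety $B_V$ contains $P'$ if and only if every row vector $(\alpha_1,\dots,\alpha_N)\in F^N$ annihilating $B_V$ also satisfies $\sum_j\alpha_jP'_j=0$ in $A(\CC)\otimes_\ZZ\QQ$; thus the minimal such subvariety corresponds to the annihilator of $L=\{(\alpha_1,\dots,\alpha_N)\in F^N:\sum_j\alpha_jP'_j=0\}$. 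By linear duality its $F$-dimension equals $N-\dim_F L$, which is precisely the $F$-dimension of the span of $P'_1,\dots,P'_N$ in $A(\CC)\otimes\QQ$, delivering the formula. The main technical hurdle will be to check that the correspondence between abelian subvarieties and $F$-subspaces respects containment of points cleanly, since elements of $F$ are only quasi-endomorphisms and denominators must be cleared throughout; this is handled by standard manipulations (replacing morphisms by integer multiples) that do not affect dimensions.
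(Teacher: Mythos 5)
Your proposal is correct and takes essentially the same route as the paper: the paper's proof likewise reduces to the simple case via Poincar\'e complete reducibility, writing $B\sim B_1\times\cdots\times B_m$ with $B_i\subset A_i^{r_in}$, and then concludes $B_i\sim A_i^{n_i}$ by appealing to the bijection between equations of $B_i$ and $F_i$-linear relations among the projected points. You simply flesh out the two steps with more detail (the idempotent decomposition of $\End(A^n)\otimes\QQ$ for the reduction, and the explicit annihilator duality $V\leftrightarrow V^\perp$ with the caveat about clearing denominators in $F$ for the simple case), but the underlying argument is the same.
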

%
%
\begin{proof}
The smallest abelian sub-variety $B$ of $A^n$ containing $P$ is isogenous to $B_1\times\dots\times B_m$, where $B_i$ is the smallest abelian sub-variety of $A_i^{r_in}$ containing the point $(\pi_{i}(P_1),\dots,\pi_{i}(P_n))$. Then $B_i$ is isogenous to $A_i^{n_i}$ with $n_i$ the dimension of the $F_i$-vector space generated in $A_i$ by the points $\pi_{i1}(P_1),\dots,\pi_{ir_i}(P_1),\dots,\pi_{i1}(P_n),\dots,\pi_{ir_i}(P_n)$, because there is a bijection between equations of $B_i$ and $F_i$-linear relations between those points. Thus, the dimension of the variety $B$ is $\dim B = \sum_{i=1}^m\dim B_i = \sum_{i=1}^mn_ig_i$.
\end{proof}

\begin{conjecture}[Abelian analogue of Schanuel conjecture]\label{A}
Let $A$ be an abelian variety, defined over $K$ and of dimension $g$. Denote by $\Omega_A$ its matrix of periods. If $P_1,\dots,P_n$ are $n$ points of $A(K)$, then with notations as above we have that
\[\mathrm{tran.deg}_{\QQ}\, K \big(\Omega_A,  \tlog_A(P_1), \dots, \tlog_A(P_n)\big) \geq  2\sum_{i=1}^mn_ig_i +\dim{\Galmot}(A).\]
\end{conjecture}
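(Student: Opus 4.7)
The plan is to derive Conjecture \ref{A} as the special case of the Generalized Period conjecture \ref{eq:GPC} applied to the $1$-motive $M = [u: \ZZ^n \to A]$, where $u$ is the morphism sending a chosen basis of $\ZZ^n$ to the given points $P_1, \dots, P_n \in A(K)$. This $1$-motive has no toric part, i.e.\ $Y = 0$ and $s = 0$ in the notation of Section \ref{motivesandtorsors}, and it is defined over $K$ (which is algebraically closed, so Lemma \ref{eq:DimUR} applies).

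First I would identify the periods of $M$. Using formula \eqref{eq:OmegaM} specialised to $s=0$, so that $G = A$, $R_i = P_i$ and $\tlog_G(R_i) = \tlog_A(P_i)$, the period matrix of $M$ reads
\[
\Omega_M \;=\; \left(\begin{matrix} \mathrm{Id}_n & \tlog_A(P_1),\dots,\tlog_A(P_n) \\ 0 & \Omega_A \end{matrix}\right).
\]
Hence $K(\mathrm{periods}(M)) = K\bigl(\Omega_A,\tlog_A(P_1),\dots,\tlog_A(P_n)\bigr)$, which is precisely the field appearing on the left-hand side of Conjecture \ref{A}.

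Next I would compute $\dim\Galmot(M)$. Since $s=0$, the lattice $Z \subseteq X^\vee \otimes Y$ is automatically zero, so $Z(1) = 0$. Lemma \ref{eq:DimUR} then gives
\[
\dim\Galmot(M) \;=\; \dim\Galmot(A) \,+\, 2\,\dim B,
\]
where $B \subseteq A^n$ is the smallest abelian sub-variety containing the point $P = (P_1,\dots,P_n)$ (the factor ${A^*}^s$ collapses). Applying Lemma \ref{lem:Poincarereducibility} to this point of $A^n$, one obtains $\dim B = \sum_{i=1}^m n_i g_i$ with the integers $n_i$ as in the statement. Substituting these two computations into the inequality of Conjecture \ref{eq:GPC} yields exactly
\[
\mathrm{tran.deg}_{\QQ}\, K\bigl(\Omega_A,\tlog_A(P_1),\dots,\tlog_A(P_n)\bigr) \;\geq\; 2\sum_{i=1}^m n_i g_i + \dim\Galmot(A),
\]
which is the content of Conjecture \ref{A}.

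The argument is thus essentially bookkeeping. The only delicate point is making sure that the period matrix \eqref{eq:OmegaM} specialises correctly when $s=0$, and that in Lemma \ref{eq:DimUR} the vanishing of the toric weight $-2$ piece indeed forces $Z(1)=0$; both are immediate from the definitions. Once checked, the conjecture follows from \ref{eq:GPC} with no further input, which is also the reason why \ref{A} is usually stated as an \emph{equivalent} reformulation of the Generalized Period conjecture restricted to $1$-motives without toric part.
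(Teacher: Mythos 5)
Your derivation is correct and matches the paper's own justification: the paper states (just before the conjecture) that Conjecture~\ref{A} \emph{is} the Generalized Period conjecture applied to $M=[u:\ZZ^n\to A]$, and your computation—periods of $M$ via~\eqref{eq:OmegaM} with $s=0$, $\dim\Galmot(M)=\dim\Galmot(A)+2\dim B$ from Lemma~\ref{eq:DimUR} with $Z(1)=0$, and $\dim B=\sum_i n_ig_i$ from Lemma~\ref{lem:Poincarereducibility}—is precisely the $s=0$ specialisation of the proposition the paper proves in the semi-abelian case. Nothing to add.
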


If $L$ is the field of definition of the abelian variety $A$, the above  estimation is the strongest for $K=\overline{L(P_1, \dots , P_n)}$. Note that, if the points $\pi_{i1}(P_1),\dots,\pi_{ir_i}(P_1),\dots,\pi_{i1}(P_n),\dots,\pi_{ir_i}(P_n)$ are $F_i$-linearly independent in $A_i$, the dimension $n_i$ is maximal and equal to $nr_i$. If this occurs for all $i=1,\dots,m$, then $\sum_{i=1}^mn_ig_i = ng$.

\medskip

 The semi-abelian analogue of Schanuel conjecture  {\it{is}} the Generalized Period conjecture \ref{eq:GPC} applied to the 1-motive $M=[u:{\ZZ}^n \to G]$ defined over $K$ and with $G$ an extension of an abelian variety $A$ by a torus $\GG_m^s$.  Before we state it, we need to compute the dimensions of the 1-motives introduced in Construction \ref{B-Z} (1) and (3).

 According to Lemma \ref{lem:Poincarereducibility} applied to the point $(P,Q)$  of $(A^n \times A^{*s})(\CC),$ with $A$ and $A^*$ isogeneous to $ A_1^{r_1}\times\dots\times A_m^{r_m}$, the dimension 
 of the smallest sub-abelian variety $B$ of $A^n \times A^{*s}$ containing $(P,Q)$ is equal to
\begin{equation}\label{eq:dimbmax}
\dim B = \sum_{i=1}^md_ig_i,
\end{equation}
where $d_i$ is the dimension of the $F_i$-vector space generated in $A_i$ by the points $\pi_{ij}(P_\ell)$ and $\pi_{ij}^*(Q_k)$, $j=1,\dots,r_i$, $\ell=1,\dots,n$ and $k=1,\dots,s$, with the notations $\pi_{ij}$ introduced before Lemma \ref{lem:Poincarereducibility} and  
$\pi^*_{ij}$ the analogue for $A^*$. The maximal value of $d_i$ is $(n+s)r_i$, obtained when all the points $\pi_{ij}(P_\ell)$ and $\pi^*_{ij}(Q_k)$ are assumed $F_i$-linearly independent. When this occurs for $i=1,\dots,m$, then $\dim B = \sum_{i=1}^m (n+s) r_ig_i = (n+s)g$ (see Remark \ref{rk:dimB_Q} for other explicit computations of the dimension of B).

The dimension of the torus $Z(1)$ in Construction \ref{B-Z} (3) is denoted 
\begin{equation}\label{eq:dimz}
\dim Z(1) = t.
\end{equation}
The maximal value of $t$ is clearly $ns$.

\begin{conjecture}[Semi-abelian analogue of Schanuel conjecture] \label{SA}
Let $A$ be an abelian variety, defined over $K$ and of dimension $g$. 
Denote by $G$ the extension of $A$ by the torus $\GG_m^s$ which corresponds to $s$ points $Q_1, \dots, Q_s $ of $ A^*(K)$, and denote by $\Omega_G$ its matrix of periods. Let $R_1,\dots,R_n$ be $n$ points of $G(K)$ and $P_1,\dots,P_n$ their projections on $A(K)$.
With notations as above we have
\[
\mathrm{tran.deg}_{\QQ}\, K \big(\Omega_G,  \tlog_G(R_1), \dots , \tlog_G(R_n) \big) \geq  2\sum_{i=1}^md_ig_i  + t + \dim{\Galmot}(A).\]
\end{conjecture}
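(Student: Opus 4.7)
The plan is to derive the conjectured inequality as a direct specialization of the Generalized Period Conjecture \ref{eq:GPC} applied to the 1-motive
$$M = [u : \ZZ^n \to G],\qquad u(x_\ell) = R_\ell \ (\ell = 1,\dots,n),$$
whose lattice part encodes the points $R_\ell$, whose abelian part is $A$, and whose toric part is $\GG_m^s$ parametrized by $Q_1,\dots,Q_s$. The proof therefore consists of two matching computations: identifying the field generated by $\mathrm{periods}(M)$ with the field on the left-hand side, and computing $\dim\Galmot(M)$ to match the right-hand side.

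First, I would identify the periods. By the description of the period matrix $\Omega_M$ in \eqref{eq:OmegaM}, the entries of $\Omega_M$ are (besides the trivial entries coming from the identity block) precisely the entries of $\Omega_G$—which include $\omega_{ij}$, $\eta_{ij}$, $\int_{\tilde\gamma_j}\xi^G_{Q_k}$ and $2\mathrm{i}\pi$—together with the coordinates of $\tlog_G(R_1),\dots,\tlog_G(R_n)$. Hence
$$K\bigl(\mathrm{periods}(M)\bigr) = K\bigl(\Omega_G,\tlog_G(R_1),\dots,\tlog_G(R_n)\bigr),$$
which is exactly the field appearing on the left-hand side of the conjecture.

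Second, I would compute $\dim\Galmot(M)$. Since $K$ is algebraically closed, Lemma \ref{eq:DimUR} gives
$$\dim\Galmot(M) = \dim\Galmot(A) + 2\dim B + \dim Z(1),$$
where $B$ and $Z(1)$ are the motives of Construction \ref{B-Z} attached to $M$. Applying Lemma \ref{lem:Poincarereducibility} to the point $(P,Q) \in A^n\times {A^*}^s$ and using that $A$ and $A^*$ are isogenous to $A_1^{r_1}\times\cdots\times A_m^{r_m}$ (so the identification $\pi_{ij}^*$ makes sense), formula \eqref{eq:dimbmax} yields $\dim B = \sum_{i=1}^m d_i g_i$. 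Formula \eqref{eq:dimz} gives $\dim Z(1) = t$. Putting these together produces the right-hand side $2\sum_{i=1}^m d_i g_i + t + \dim\Galmot(A)$.

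The main conceptual point to check—the most delicate step—is that the reductive quotient of $\Galmot(M)$ collapses to $\Galmot(A)$, even though the toric part $\GG_m^s$ and the lattice part $\ZZ^n$ are nontrivial. This is furnished by the first bullet in Section \ref{motivicGaloisgroups}, which states $\mathrm{Gr}_0^{\mathrm{W}}(\Galmot(M)) = \Galmot(X+A+Y(1)) = \Galmot(A)$; the point is that $\QQ(1)$ already lies in $\langle A\rangle^\otimes$ through the Weil pairing $\mathcal{W}_A:A\otimes A^*\to\GG_m$ of \eqref{eq:Weil-pairing}, and $X$ and $Y^\vee$ contribute only their ranks, absorbed into the unipotent radical through $B$ and $Z(1)$. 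Once this is granted, Conjecture \ref{eq:GPC} applied to $M$ gives
$$\mathrm{tran.deg}_\QQ\,K\bigl(\mathrm{periods}(M)\bigr) \;\geq\; \dim\Galmot(M),$$
which, after substituting the two identifications above, is exactly the statement of Conjecture \ref{SA}.
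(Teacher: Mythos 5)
Your proposal is correct and follows the same route as the paper's own proof: it identifies $K(\mathrm{periods}(M))$ with the field $K(\Omega_G,\tlog_G(R_1),\dots,\tlog_G(R_n))$ via \eqref{eq:OmegaM}, and it rewrites $\dim\Galmot(M)$ as $\dim\Galmot(A)+2\dim B+\dim Z(1)$ via Lemma \ref{eq:DimUR} and then plugs in \eqref{eq:dimbmax} and \eqref{eq:dimz}. Your added paragraph on the collapse of the reductive quotient to $\Galmot(A)$ is a correct elaboration of material the paper states in Section \ref{motivicGaloisgroups} but does not repeat inside the proof.
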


If $L$ is the field of definition of the abelian variety $A$, the above  estimation is the strongest for $K=\overline{L(Q_1, \dots, Q_s,R_1, \dots , R_n)}$.

Using the notation of the above conjecture, consider the 1-motive $M=[u:{\ZZ}^n \to G]$ defined over $K$, where the group morphism $u$ is defined by the $n$ points $R_1, ..., R_n$ of $G(K)$.

\begin{proposition}
Conjecture \ref{SA} is the Generalized Period conjecture \ref{eq:GPC} applied to the 1-motive $M$.
\end{proposition}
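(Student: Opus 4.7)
The plan is to show that, when the Generalized Period conjecture \ref{eq:GPC} is specialised to the 1-motive $M=[u:\ZZ^n\to G]$, the left- and right-hand sides of the resulting inequality coincide on the nose with those of Conjecture \ref{SA}. This is a matter of identifying the set of periods of $M$ with $\Omega_G\cup\{\tlog_G(R_1),\dots,\tlog_G(R_n)\}$, and of computing $\dim\Galmot(M)$ by means of Lemma \ref{eq:DimUR} together with the explicit dimension formulas \eqref{eq:dimbmax} and \eqref{eq:dimz}.

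For the left-hand side, I would invoke formula \eqref{eq:OmegaM}, which gives the De Rham\,--\,Hodge period matrix of $M$ as
\[
\Omega_M=\left(\begin{matrix} &\tlog_G(R_1)\\ \mathrm{Id}_n &\vdots\\ &\tlog_G(R_n)\\ 0 &\Omega_G\end{matrix}\right).
\]
Its entries are $1$, the entries of $\Omega_G$ (namely $\omega_{ij}$, $\eta_{ij}$, $\int_{\tilde\gamma_j}\xi^G_{Q_k}$ and $2\mathrm{i}\pi$), together with the components of the generalised semi-abelian logarithms $\tlog_G(R_\ell)$, $\ell=1,\dots,n$. Consequently, the field generated over $K$ by the periods of $M$ is exactly $K\big(\Omega_G,\tlog_G(R_1),\dots,\tlog_G(R_n)\big)$, and the transcendence degree in Conjecture \ref{eq:GPC} is exactly the one appearing in Conjecture \ref{SA}.

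For the right-hand side, I would apply Lemma \ref{eq:DimUR}, which (since $K$ is algebraically closed) yields
\[
\dim\Galmot(M)=\dim\Galmot(A)+2\dim B+\dim Z(1),
\]
with $B$ the smallest abelian subvariety of $A^n\times {A^*}^s$ containing $(P,Q)$ (Construction \ref{B-Z}(1)) and $Z(1)$ the toric part obtained in Construction \ref{B-Z}(3). By Lemma \ref{lem:Poincarereducibility} applied to $(P,Q)\in(A^n\times{A^*}^s)(\CC)$, and recording the Poincar\'e decomposition $A\sim A_1^{r_1}\times\dots\times A_m^{r_m}$, one obtains $\dim B=\sum_{i=1}^m d_ig_i$ as in \eqref{eq:dimbmax}, while the description in \eqref{eq:dimz} gives $\dim Z(1)=t$. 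Putting these together,
\[
\dim\Galmot(M)=\dim\Galmot(A)+2\sum_{i=1}^m d_ig_i+t,
\]
which is precisely the quantity appearing on the right-hand side of Conjecture \ref{SA}.

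There is no real obstacle: all structural work has been done in Sections \ref{semiabelian} and \ref{motivesandtorsors}, and the argument reduces to carefully matching the two sides. The only point requiring attention is to make sure that the description of the periods of $M$ via \eqref{eq:OmegaM} and the dimension computations of $B$ and $Z(1)$ invoked here really do refer to the same objects as those used to state Conjecture \ref{SA}; this is guaranteed by the notational conventions fixed before the statement of that conjecture and by the definition of $d_i$ via the projections $\pi_{ij},\pi_{ij}^*$ introduced in Lemma \ref{lem:Poincarereducibility}.
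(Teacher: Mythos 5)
Your proof is correct and follows essentially the same route as the paper: identify the periods of $M$ via \eqref{eq:OmegaM} with $\Omega_G$ together with the $\tlog_G(R_\ell)$, then apply Lemma \ref{eq:DimUR} together with the dimension formulas \eqref{eq:dimbmax} and \eqref{eq:dimz} to match $\dim\Galmot(M)$ with the right-hand side of Conjecture \ref{SA}.
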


\begin{proof} By \eqref{eq:OmegaM} the periods of $ M$ are the elements of $\Omega_G$ and the generalized semi-abelian logarithms $\tlog_G(R_1), \dots , \tlog_G(R_n)$.
	Applying the Generalized Period conjecture \ref{eq:GPC} to the 1-motive $M$, we get that 
	$$\mathrm{tran.deg}_{\QQ}\, K\big(\Omega_G, \tlog_G(R_1), \dots , \tlog_G(R_n) \big) \geq \dim {\Galmot}(M). $$
	Since 
	by Lemma \ref{eq:DimUR} $ \dim \Galmot(M) = 2 \dim B+ \dim Z(1) + \dim \Galmot(A),$
	the equalities \eqref{eq:dimbmax} and \eqref{eq:dimz} allow then to conclude that Conjecture \ref{SA} and Conjecture \ref{eq:GPC} applied to $M$ coincide.
\end{proof}

\medskip

\subsection{Relative Semi-abelian conjectures.}

We now turn to relative versions of the above conjectures that we will need for our main Theorem \ref{MainThm}.
We use the terminology ``relative conjecture" because we are considering the relative transcendence degree of a field over a sub-field.

 Recall that the notations $Z(1)$ and $B_Q$ have been introduced in Construction \ref{B-Z} and \S\ref{torsors}. 

 \begin{conjecture}[Relative Semi-abelian conjecture]\label{WSA-V2}
Let $A$ be an abelian variety defined over $\oQQ$ and of dimension $g$. Let $G$ be an extension of $A$ by the torus $\GG_m^s$ which corresponds to $s$ points $Q_1, \dots, Q_s $ of $ A^*(\oQQ)$, and denote by $\Omega_G$ its matrix of periods. Let $R_1,\dots,R_n $ be $n$ points of $G(\CC)$. Then 
	\[\mathrm{tran.deg}_{\QQ (\Omega_G)}\, \QQ \big(\Omega_G, R_1,\dots,R_n, \tlog_G(R_1), \dots , \tlog_G(R_n) \big) \geq 2 \dim B_Q + \dim Z(1) .\]
\end{conjecture}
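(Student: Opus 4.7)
The plan is to derive the stated inequality as a consequence of the Generalized Period conjecture \ref{eq:GPC} applied to the 1-motive
\[
M := [u:\ZZ^n\to G],\qquad u(e_k)=R_k \ \text{ for } \ k=1,\dots,n.
\]
Since $A$ and each $Q_k$ is defined over $\oQQ$, while the $R_k$'s live in $G(\CC)$, the 1-motive $M$ is defined over the algebraically closed subfield $K := \overline{\oQQ(R_1,\dots,R_n)} \subset \CC$, which is the natural base field for applying Lemma \ref{eq:DimUR}.

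By \eqref{eq:OmegaM} the periods of $M$ consist of $\Omega_G$ together with the components of $\tlog_G(R_1),\dots,\tlog_G(R_n)$. Applying Conjecture \ref{eq:GPC} to $M$ and using Lemma \ref{eq:DimUR} then gives
\begin{equation*}
\mathrm{tran.deg}_\QQ K\big(\Omega_G,\tlog_G(R_1),\dots,\tlog_G(R_n)\big) \geq \dim \Galmot(A) + 2\dim B + \dim Z(1),
\end{equation*}
and since algebraic extensions of the base field do not change transcendence degree, the left-hand side equals $\mathrm{tran.deg}_\QQ \oQQ(\Omega_G,R_1,\dots,R_n,\tlog_G(R_1),\dots,\tlog_G(R_n))$. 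Next I would invoke the \emph{unconditional} upper-bound half of Grothendieck's period conjecture, namely that for any motive $N$ over $\oQQ$ one has $\mathrm{tran.deg}_\QQ \oQQ(\mathrm{periods}(N)) \leq \dim \Galmot(N)$, because the De Rham--Betti comparison exhibits the periods as the coordinates of a point on a $\Galmot(N)_\CC$-torsor. Applied to the 1-motive $[0\to G]$ over $\oQQ$, for which Lemma \ref{eq:DimUR} specializes with $n=0$ (so the pairing $[\,,\,]$ is trivial, $Z(1)=0$, and the variety $B$ reduces to $B_{v^*}$), this yields
\begin{equation*}
\mathrm{tran.deg}_\QQ \oQQ(\Omega_G) \leq \dim \Galmot(A) + 2\dim B_{v^*}.
\end{equation*}

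Subtracting this upper bound from the previous lower bound, and using the relation $\dim B = \dim B_{v^*} + \dim B_Q$ from \eqref{eq:dimB}, I obtain
\begin{equation*}
\mathrm{tran.deg}_{\oQQ(\Omega_G)} \oQQ\big(\Omega_G,R_1,\dots,R_n,\tlog_G(R_1),\dots,\tlog_G(R_n)\big) \geq 2\dim B_Q + \dim Z(1),
\end{equation*}
which is exactly the inequality to prove. The bulk of the argument is bookkeeping with dimensions coming from Lemma \ref{eq:DimUR} and with the tower law for transcendence degrees; the one substantive ingredient that is not assumed by Conjecture \ref{eq:GPC} itself is the unconditional upper bound on $\mathrm{tran.deg}_\QQ \oQQ(\Omega_G)$, and I expect the main point requiring careful justification within the motivic framework of the paper to be that this upper bound is indeed available for the 1-motive $[0\to G]$.
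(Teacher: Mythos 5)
Your proposal matches the paper's proof of Proposition \ref{lem:equivWSA-V2-GPC} almost step for step: apply Conjecture \ref{eq:GPC} to the 1-motive $M=[u:\ZZ^n\to G]$, compute $\dim\Galmot(M)$ via Lemma \ref{eq:DimUR}, subtract the contribution of $\Omega_G$, and use the splitting $\dim B=\dim B_{v^*}+\dim B_Q$ from \eqref{eq:dimB}. The one place you diverge is that for the bound $\mathrm{tran.deg}_\QQ\,\oQQ(\Omega_G)\le\dim\Galmot(A)+2\dim B_{v^*}$ you appeal only to the unconditional ``torsor'' half of the period conjecture, whereas the paper invokes Grothendieck's Period conjecture \ref{eq:CP} to get the full (conditional) equality $\mathrm{tran.deg}_\QQ\,\oQQ(\Omega_G)=\dim\Galmot(G)$. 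Your observation is correct, and it is a genuine, if modest, sharpening: only the trivial inequality is needed for the subtraction, so the Relative Semi-abelian conjecture follows from Conjecture \ref{eq:GPC} applied to $M$ alone, without also assuming \ref{eq:CP} for $[0\to G]$. The identification $\dim\Galmot(G)=\dim\Galmot(A)+2\dim B_{v^*}$ via Lemma \ref{eq:DimUR} at $n=0$ (so $\mathcal E_{-2}=0$, the bracket vanishes, $Z(1)=0$, and $B=B_{v^*}$) is also fine, and it amounts to the paper's identity $\dim\UR(G)=\dim\UR(M^\vee/\W_{-2}M^\vee)=2\dim B_{v^*}$.
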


In the context of Conjecture \ref{SA}, here we have taken $K= \overline{\QQ (R_1,\dots,R_n)}.$

\begin{remark}
	If $s=0$, and so in particular $R_i=P_i \in A(K)$ for $i=1, \dots n$, under the hypothesis of Conjecture \ref{WSA-V2} the above inequality becomes
	\begin{equation} \label{WA}
	\mathrm{tran.deg}_{\QQ (\Omega_A)}\, \QQ \big(\Omega_A, P_1,\dots,P_n, \tlog_A(P_1), \dots , \tlog_A(P_n) \big) \geq   2 \dim B_Q
	\end{equation}
	which is the Weak Abelian Schanuel conjecture so called in \cite{V18} and \cite[Conjecture 3]{PSS}.
\end{remark}

A weak version of the Relative Semi-abelian conjecture \ref{WSA-V2} is 

\begin{conjecture}[Weak Relative Semi-abelian conjecture]\label{WRSA-V2}
Let $G$ be a semi-abelian variety defined over $\oQQ$ and denote by $\Omega_G$ its matrix of periods. Let $r$ be a point of $\Lie G_\CC$, then the transcendence degree over ${\QQ(\Omega_G)}$ of the field generated by $r$ and $\exp_G(r)$ is at least the dimension of the smallest translate in $G$ of an algebraic group defined over $\oQQ$ and which contains $\exp_G(r)$.
\end{conjecture}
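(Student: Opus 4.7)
The plan is to deduce Conjecture \ref{WRSA-V2} from the Relative Semi-abelian Conjecture \ref{WSA-V2}, applied to the 1-motive $M = [u:\ZZ \to G]$ with $u(1) = R := \exp_G(r)$. Write $P = \pi(R) \in A(\CC)$.

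\emph{Step 1 (Identify $\dim V$ motivically).} By Lemma \ref{lem:tildeR} and \eqref{def:tE}, the $Z(1)$-torsor $E \subseteq G$ attached to $M$ is a translate of the algebraic subgroup $\tE = E - R$ of $G$ (an extension of $B_O$ by $Z(1)$, both defined over $\oQQ$), and $E$ is minimal with respect to pull-backs and push-downs. Any $\oQQ$-defined translate of an algebraic subgroup of $G$ containing $R$ is in particular such a torsor, so $V = E$ and
\[ \dim V = \dim E = \dim B_Q + \dim Z(1). \]

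\emph{Step 2 (Apply Conjecture \ref{WSA-V2} and strip second-kind data).} With $n = 1$,
\[
\mathrm{tran.deg}_{\oQQ(\Omega_G)}\,\oQQ\bigl(\Omega_G, R, \tlog_G(R)\bigr) \;\geq\; 2\dim B_Q + \dim Z(1).
\]
Since the components of $r$ and of $\log_G(R)$ differ by $\oQQ$-linear combinations of entries of $\Omega_G$, the field $\oQQ(\Omega_G, r, R)$ is obtained from $\oQQ(\Omega_G, R, \tlog_G(R))$ by removing the $g$ second-kind quasi-periods $\int_O^P \eta_1, \ldots, \int_O^P \eta_g$. The heart of the argument is then to show that only $\dim B_Q = \dim B_O$ of these quasi-periods are \emph{new} over $\oQQ(\Omega_G, r, R)$: indeed, $P$ lies on the $\oQQ$-rational translate $B_Q = P_0 + B_O$ of the abelian subvariety $B_O \subseteq A$, so the restrictions $\eta_i|_{B_O}$ span at most $\dim B_O$ independent second-kind cohomology classes in $\HH^1_\dR(B_O)$; the remaining $g - \dim B_O$ restrictions become exact on $B_O$, and their integrals are algebraic combinations of the previously available data (the contribution of the $\oQQ$-rational shift $P_0$ being absorbed by $R$ itself). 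Dropping these $g$ quasi-periods therefore lowers the transcendence degree by at most $\dim B_Q$, giving
\[
\mathrm{tran.deg}_{\oQQ(\Omega_G)}\,\oQQ(\Omega_G, r, R) \;\geq\; \bigl(2\dim B_Q + \dim Z(1)\bigr) - \dim B_Q \;=\; \dim V.
\]

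\emph{Main obstacle.} The critical delicate point is the quasi-period bookkeeping above: one needs to rigorously identify the ``new'' contribution of $\int_O^P \eta_i$ modulo $\oQQ(\Omega_G, r, R)$ as exactly $\dim B_Q$-dimensional. A cleaner route would be to apply Conjecture \ref{WSA-V2} directly to a sub-1-motive of $M$ whose abelian part is $B_O$ (rather than $A$), so that its generalized logarithm involves only $\dim B_O$ second-kind quasi-periods from the outset; one must then check that the constructions $B$ and $Z(1)$ for this sub-1-motive recover $B_Q$ and $Z(1)$ of the ambient $M$, and that $\Omega_{B_O} \subseteq \oQQ(\Omega_G)$ by functoriality of the De Rham--Betti comparison under the $\oQQ$-inclusion $B_O \hookrightarrow A$, yielding the desired bound.
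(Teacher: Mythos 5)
Your proposal follows exactly the paper's own two-sentence deduction: apply Conjecture \ref{WSA-V2} to the 1-motive $M=[u:\ZZ\to G]$ with $u(1)=R$, identify the target dimension as $\dim E=\dim B_Q+\dim Z(1)$ via Lemma \ref{lem:tildeR}, and subtract the second-kind contribution, bounded by $\dim B_Q$. You also correctly flag as the delicate point the bookkeeping of the second-kind quasi-periods, which the paper states only as an ``observation'' without further detail.
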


To deduce this statement from Conjecture \ref{WSA-V2}, it suffices to observe that the transcendence degree over ${\mathbb Q(\Omega_G)}$ of the field generated by the integrals of the second kind is bounded above by $\dim B_Q$ and moreover $\dim B_Q+\dim Z(1)$ is the dimension of the torsor $E$ by Lemma \ref{lem:tildeR}.

\medskip

We now compute the dimension of $B_Q$, which is equal to $\dim B-\dim B_{v^*}$. With the same notations as in \S 3.1, applying Lemma \ref{lem:Poincarereducibility} to $B_{v^*}$ we get
\begin{equation}\label{eq:dimbv*}
\dim B_{v^*} = \sum_{i=1}^ms_ig_i 
\end{equation}
with $s_i$ the dimension of the $F_i$-vector space generated in $A_i$ by the points $\pi^*_{ij}(Q_k)$, $j=1,\dots,r_i$, $k=1,\dots,s$. Together with \eqref{eq:dimbmax}, this gives
\begin{equation}\label{eq:dimbq}
\dim B_Q = \sum_{i=1}^m(d_i-s_i)g_i  := \sum_{i=1}^m c_i g_i ,
\end{equation}
where $c_i=d_i-s_i$ turns out to be the dimension of the $F_i$-vector space generated in $A_i$ by the points $\pi_{ij}(P_\ell)$ and $\pi^*_{ij}(Q_k)$, $j=1,\dots,r_i$, $\ell=1,\dots,n$, $k=1,\dots,s$ modulo the one generated by the $\pi^*_{ij}(Q_k)$'s.

Recalling the definition of $t$ in \eqref{eq:dimz}, we get as a corollary of Conjecture \ref{WSA-V2}:

\begin{conjecture}[Explicit Relative Semi-abelian conjecture]\label{WSA}
With notations as above and as in Conjecture \ref{WSA-V2}, 
we have 
\[\mathrm{tran.deg}_{\QQ (\Omega_G)}\, \QQ \big(\Omega_G, R_1,\dots,R_n, \tlog_G(R_1), \dots , \tlog_G(R_n) \big) \geq 2\sum_{i=1}^m c_ig_i+t.\]
\end{conjecture}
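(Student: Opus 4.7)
The plan is to derive Conjecture \ref{WSA} as a direct corollary of Conjecture \ref{WSA-V2} by substituting the explicit dimension formulas established just before the statement. No new transcendence input is required; what is needed is to show that the abstract lower bound $2\dim B_Q + \dim Z(1)$ equals $2\sum_{i=1}^m c_i g_i + t$ under the given hypotheses.

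First I would invoke Poincar\'e's complete reducibility theorem to write $A \sim A_1^{r_1}\times\cdots\times A_m^{r_m}$ up to isogeny, with the $A_i$ simple and pairwise non-isogenous, and defined over $K$. Since every abelian variety is isogenous to its Cartier dual (and by the convention stated in \S\ref{motivicGaloisgroups} we work modulo isogenies), $A^*$ admits the same isotypic decomposition, so that the projections $\pi_{ij}$ on $A$ and their analogues $\pi^*_{ij}$ on $A^*$ are well defined on $A^n \times {A^*}^s$. Applying Lemma \ref{lem:Poincarereducibility} to the point $(P,Q)\in(A^n\times{A^*}^s)(\CC)$ yields $\dim B = \sum_{i=1}^m d_i g_i$ as recorded in \eqref{eq:dimbmax}, and applying the same lemma to the point $Q\in {A^*}^s(\CC)$ yields $\dim B_{v^*} = \sum_{i=1}^m s_i g_i$ as recorded in \eqref{eq:dimbv*}.

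Next I would use the torsor structure established in \S\ref{torsors}: equation \eqref{eq:dimB} gives $\dim B = \dim B_{v^*} + \dim B_Q$ because $B$ is a $B_O$-torsor over $B_{v^*}$ and $B_Q = B_O + P$ by \eqref{eq:B_QB_O}, so in particular $\dim B_O = \dim B_Q$. Subtracting the two formulas produces $\dim B_Q = \sum_{i=1}^m c_i g_i$ with $c_i = d_i - s_i$. The identification of $c_i$ with the dimension of the $F_i$-vector space generated by the $\pi_{ij}(P_\ell)$'s modulo the $F_i$-span of the $\pi^*_{ij}(Q_k)$'s is immediate linear algebra over the skew-field $F_i$: for $V_i$ the combined $F_i$-span and $W_i\subset V_i$ the $F_i$-span of the $\pi^*_{ij}(Q_k)$'s alone, one has $\dim_{F_i}(V_i/W_i) = \dim_{F_i}V_i - \dim_{F_i}W_i = d_i - s_i$.

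Finally I would substitute $\dim B_Q = \sum_{i=1}^m c_i g_i$ and $\dim Z(1) = t$ (the latter being the very definition of $t$ in \eqref{eq:dimz}) into the right-hand side of the inequality in Conjecture \ref{WSA-V2}. The resulting lower bound is precisely $2\sum_{i=1}^m c_i g_i + t$, matching the statement of Conjecture \ref{WSA}. The only step requiring any care is the uniform application of Lemma \ref{lem:Poincarereducibility} to $A^n \times {A^*}^s$, but since the isotypic components of $A$ and $A^*$ coincide up to isogeny this presents no genuine obstacle; the deepest input is Conjecture \ref{WSA-V2} itself, which is taken as hypothesis.
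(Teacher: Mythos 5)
Your derivation matches the paper's own argument: the authors likewise present Conjecture \ref{WSA} as an immediate corollary of Conjecture \ref{WSA-V2}, obtained by substituting $\dim B_Q = \sum_{i=1}^m c_i g_i$ (from Lemma \ref{lem:Poincarereducibility} applied to $(P,Q)$ and to $Q$, combined via \eqref{eq:dimB}) and $\dim Z(1) = t$ (the definition in \eqref{eq:dimz}). Your brief justification of $c_i = d_i - s_i$ as a quotient dimension over the possibly skew field $F_i$ is correct and consistent with the paper's remark preceding the statement.
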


\begin{remark}
If $g=0$ and $n=1$, and so in particular $R_1=R=(\tau_{1}, \dots, \tau_{s} ) \in \GG_m^s(K)$, the above inequality becomes
\[\mathrm{tran.deg}_{\QQ (2i \pi)}\, \QQ \big(2 i \pi,\tau_{1}, \dots, \tau_{s},\log(\tau_{1}),\dots,\log(\tau_{s}) \big) \geq t,\] where $t$ is the dimension of the $\QQ$-vector space generated by $\log(\tau_1),\dots,\log(\tau_s)$ in $\CC/2\mathrm i\pi\QQ$.
This is Schanuel conjecture (up to the transcendency of $\pi$).
\end{remark}

\begin{proposition}\label{lem:equivWSA-V2-GPC} The Relative Semi-abelian conjecture \ref{WSA-V2} and, in particular, the Explicit Relative Semi-abelian conjecture \ref{WSA} are consequences of the Generalized Period conjecture \ref{eq:GPC}.
\end{proposition}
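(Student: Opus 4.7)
The plan is to apply the Generalized Period conjecture \ref{eq:GPC} to two 1-motives attached to our data and combine the resulting estimates by a direct transcendence-degree subtraction. Consider first the 1-motive $M=[u:\ZZ^n\to G]$, where $u$ sends the canonical generators of $\ZZ^n$ to $R_1,\dots,R_n$. This 1-motive is defined over $K=\overline{\oQQ(R_1,\dots,R_n)}$, and by \eqref{eq:OmegaM} its periods are exactly $\Omega_G$ together with the entries of $\tlog_G(R_1),\dots,\tlog_G(R_n)$. Applying Conjecture \ref{eq:GPC} to $M$ and combining with Lemma \ref{eq:DimUR} (which applies since $K$ is algebraically closed) yields
\[
\mathrm{tran.deg}_\QQ\, K\big(\Omega_G,\tlog_G(R_1),\dots,\tlog_G(R_n)\big)\ \geq\ \dim\Galmot(A) + 2\dim B + \dim Z(1).
\]
Because $K$ is algebraic over $\oQQ(R_1,\dots,R_n)$, the left-hand side equals $\mathrm{tran.deg}_\QQ\, \oQQ\big(\Omega_G,R_1,\dots,R_n,\tlog_G(R_1),\dots,\tlog_G(R_n)\big)$.

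Next, consider the sub-motive $M'=[0\to G]$; its periods are precisely $\Omega_G$, and it is defined over $\oQQ$ since $G$ is. For $M'$ one does \emph{not} need the conjecture: the period isomorphism furnishes a complex point of the period torsor of $M'$, an $\oQQ$-variety of dimension $\dim\Galmot(M')$, whence the standard (unconditional) estimate
\[
\mathrm{tran.deg}_\QQ\, \oQQ(\Omega_G)\ \leq\ \dim\Galmot(M') = \dim\Galmot(A) + 2\dim B_{v^*}.
\]
Here the second equality is Lemma \ref{eq:DimUR} applied to $M'$: since the $X$-part of $M'$ is trivial one has $X^\vee\otimes Y=0$, hence $Z(1)=0$ for $M'$, and the abelian datum for $M'$ reduces to $B_{v^*}$ of \eqref{Bv*}.

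Subtracting these two displays and using the identity $\dim B=\dim B_Q+\dim B_{v^*}$ from \eqref{eq:dimB}, one obtains
\[
\mathrm{tran.deg}_{\oQQ(\Omega_G)}\, \oQQ\big(\Omega_G,R_1,\dots,R_n,\tlog_G(R_1),\dots,\tlog_G(R_n)\big)\ \geq\ 2\dim B_Q + \dim Z(1),
\]
which is precisely Conjecture \ref{WSA-V2}. The Explicit Relative Semi-abelian conjecture \ref{WSA} then follows immediately upon substituting the expressions \eqref{eq:dimbq} for $\dim B_Q$ and \eqref{eq:dimz} for $\dim Z(1)$.

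The delicate point of the argument is that one is subtracting a conjectural lower bound (for $M$) from a standard upper bound (for $M'$), and it is crucial that these bounds fit together cleanly. This is ensured by the fact that $M'$ is a sub-quotient of $M$ with trivial $X$-part, so that Lemma \ref{eq:DimUR} produces the matching $\dim\Galmot(A)$ term and forces $Z(1)=0$ on the $M'$ side, leaving exactly the semi-abelian relative quantity $2\dim B_Q+\dim Z(1)$.
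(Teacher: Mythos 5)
Your proof is correct and follows essentially the same strategy as the paper's: apply the Generalized Period conjecture to the 1-motive $M=[u:\ZZ^n\to G]$, use Lemma \ref{eq:DimUR} to identify $\dim\Galmot(M)=\dim\Galmot(A)+2\dim B+\dim Z(1)$, subtract $\mathrm{tran.deg}_\QQ\,\oQQ(\Omega_G)$ to pass to the relative transcendence degree, and conclude with \eqref{eq:dimB}, \eqref{eq:dimbq} and \eqref{eq:dimz}. The one point where you diverge is in bounding $\mathrm{tran.deg}_\QQ\,\oQQ(\Omega_G)$: the paper invokes the Grothendieck Period conjecture \ref{eq:CP} to assert the \emph{equality} $\mathrm{tran.deg}_\QQ\,\oQQ(\Omega_G)=\dim\Galmot(G)$, whereas you observe, correctly, that only the upper bound $\mathrm{tran.deg}_\QQ\,\oQQ(\Omega_G)\le\dim\Galmot(G)$ is needed in the subtraction, and that this inequality is \emph{unconditional} since the period matrix of $G$ is a $\CC$-point of the period torsor, an $\oQQ$-variety of dimension $\dim\Galmot(G)$. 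This is a small but genuine refinement: it shows that Conjecture \ref{WSA-V2} already follows from applying the Generalized Period conjecture to the single 1-motive $M$ over $K=\overline{\oQQ(R_1,\dots,R_n)}$, without a second conjectural input for $[0\to G]$ over $\oQQ$.
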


\begin{proof} Let $x_1,\dots, x_n$ be generators of the character group ${\ZZ}^n$. Consider the 1-motive $M=[u:{\ZZ}^n \to G]$ with $u(x_\ell) = R_\ell$ for $\ell=1, \dots, n$. By \eqref{eq:OmegaM} the periods of $ M$ are the elements of $\Omega_G$ and the generalized semi-abelian logarithms $\tlog_G(R_1), \dots , \tlog_G(R_n)$. Applying the Generalized Period conjecture \ref{eq:GPC} to $M$ we get 
\[
\mathrm{tran.deg}_{\QQ}\, \QQ  \big(\Omega_G, R_1,\dots,R_n,  \tlog_G(R_1), \dots , \tlog_G(R_n) \big) \geq \dim \Galmot (M).
\]
Since the extension $G$ is defined over $\oQQ$, Grothendieck Period conjecture \ref{eq:CP} gives
\begin{equation}\label{dimGmot(G)}
\begin{aligned} 
\mathrm{tran.deg}_{\QQ}\,\QQ\big(\Omega_G \big) &= \dim\Galmot (G)\\
&= \dim \Galmot (A) + \dim \UR (G),
\end{aligned}\end{equation}
where the second equality is \eqref{eq:Dim} applied to the motive $[0\to G]$. Again by \eqref{eq:Dim}, 
 \[\dim \Galmot (M) = \dim \Galmot (A) + \dim \UR (M)\]
 and so
\begin{equation}\label{eq:DiffUR}
\begin{split}
\mathrm{tran.deg}_{\QQ (\Omega_G)}\, \QQ \big(\Omega_G, R_1,\dots,R_n,  \tlog_G(R_1), \dots , \tlog_G(R_n) \big) \geq \kern3cm\\
\dim \UR (M) - \dim \UR (G).
\end{split}\end{equation}
Applying Lemma \ref{eq:DimUR} to $M$, we get that $\dim \UR (M)= \dim Z(1) + 2 \dim B.$ Then applying Lemma \ref{eq:DimUR} to the Cartier dual of $G$, which is the 1-motive $M^*/\W_{-2}M^* =[v^* :Y^\vee \to A^*]$, we get that $ \dim \UR(G) = \dim \UR (M^* /\W_{-2}  M^*) = 2 \dim B_{v^*}$, where $B_{v^*}$ is defined in \eqref{Bv*} (observe that we have the first equality because $G$ and its Cartier dual define the same tannakian category, and so they have the same motivic Galois group). Finally the inequality (\ref{eq:DiffUR}) together with  the equalities  (\ref{eq:dimB}), \eqref{eq:dimbq} and \eqref{eq:dimz} give the conclusion.
\end{proof}
	
\begin{remark} In the context of the Explicit Relative Semi-abelian conjecture \ref{WSA}, assume $n=s=1$ and $P=Q$. Under these hypotheses the three 1-motives $[0 \to G]$, $M/\W_{-2}M=[v:\ZZ\to A ]$ and $M^*/\W_{-2} M^*=[v^*:\ZZ\to A^* ]$ are isogeneous. Hence the periods $\Omega_G$ of $[0 \to G]$, which are the periods $\Omega_A$ of $A$ and the $2g$ abelian integrals of the third kind  $ \int_{\tilde\gamma_j} \xi^G_{Q} $ for $j=1,\dots, 2g$, \textit{generate the same field over $\oQQ$} as the periods of $M/\W_{-2}M$, which are the periods $\Omega_A$ of $A$ and the $2g$ abelian integrals of the first and the second kind $\int_O^P \omega_i$, $\int_O^P \eta_i$ for $i=1,\dots,g$. 
Conjecture \ref{WSA-V2} then reduces to 
\[\mathrm{tran.deg}_{\QQ (\Omega_G)}\, \QQ \left(\Omega_G, R,  \int_O^R \xi^G_{Q} \right) \geq \dim Z(1)\]
with $\dim Z(1)=1$ or $0$.

\end{remark}


\section{Proof of the main theorem \ref{MainThm}}\label{proofMainThm}

Let $G$ be an extension of a $g$-dimensional abelian variety $A$ by a the torus $\GG_m^s$, which is defined over $\oQQ$ and which corresponds to the points $Q_1,\dots,Q_{s}$ of $A^*(\oQQ)$.

By convention, $\exp_G \tlog_G(R)= R$. Given a suitable set $S$, we denote
$\exp_G(S)$ (resp. $\log_G(S), \tlog_G(S))$ the set of elements $\exp_G(r)$ for $r \in S$ (resp. $\log_G(R), \tlog_G(R)$ for $R \in S$). By induction we can rewrite the two towers ${\cE}_n$ (\ref{eq:En}) and ${\cL}_n$ (\ref{eq:Ln}) in the following way 
\begin{align}
\nonumber &\cE_0 = \oQQ, &{\cE}_n  &= \overline{{\QQ}\Big(
 \exp_G\big(\Lie G_\CC(\cE_{n-1})\big)\Big)} \quad \mathrm{for} \; n \geq 1,\\
\nonumber &\cL_0=\oQQ, &{\cL}_n &= \overline{{\QQ}\Big( \Omega_G,  \; \widetilde{\log}_G\big(G({\cL}_{n-1})\big)\Big)} \quad \mathrm{for} \; n \geq 1.
\end{align}

We have to prove that if Relative Semi-abelian conjecture \ref{WSA-V2} is true, then  $ {\cE}= \cup_{n \geq 0} {\cE}_n$ and $ {\cL}=\cup_{n \geq 0} {\cL}_n $ are algebraically independent over $\oQQ.$ It is enough to prove that $ {\cE}_m$ and $ {\cL}$ are algebraically independent over $\oQQ$ for all $m$. We proceed by contradiction: if $\mathcal E_m$ and $\mathcal L$ are not algebraically independent, there exists $n$ such that $\mathcal E_m$ and $\mathcal L_n$ are not algebraically independent. We choose such a $n$ minimal. Clearly $m,n \geq 1.$
 
 As $\cE_m$ and $\cL_n$ are not algebraically independent over $\oQQ$,
there exist elements $\ell_1, \ldots, \ell_h$ of $\cL_n$, algebraically independent
over  $\oQQ$ which are algebraically dependent over $\cE_m$ i.e. there exists
a finite subset $\{e_1,\ldots,e_k\}$ of elements of $\cE_m$ 
such that $\ell_1,\ldots, \ell_h$ are algebraically dependent over $\oQQ(e_1, \ldots, e_k)$.

  Lemmas 2 and 3 of \cite{PSS} are still true if we replace abelian varieties with semi-abelian varieties since their proofs are based on the construction of the two towers $\{ {\cE}_n\}_n$ and $\{ {\cL}_n\}_n$. Applying \cite[Lemma 2]{PSS} to  $\{e_1,\dots , e_k\} \subseteq {\cE}_m ,$  there exists a finite subset $\mathcal{A} \subseteq {\cE}_{m -1}$ such that $\mathcal{A} \cup \{e_1,\dots , e_k\}$ is algebraic over the field $\oQQ \big(\exp_G (\mathcal{A}^{g+s}) \big) .$ On the other hand by \cite[Lemma 3]{PSS} applied to  $\{\ell_1,\ldots,\ell_h\} \subseteq {\cL}_n ,$ there exists a finite subset $\mathcal{C} \subseteq {\cL}_{n}^{g +s}$  with $\exp_G(\mathcal{C})\subseteq G( {\cL}_{n-1})$ such that the set $\{\mathrm{components\; of} \exp_G (\mathcal{C} ) \} \cup \{\ell_1,\ldots,\ell_h\}$ is algebraic over the field $\oQQ \big(\Omega_G, \mathcal{C} \big)$.
  
\medskip
  
Let $\vert \mathcal{A}^{g+s} \vert =n_1$ and $\vert \mathcal{C} \vert =n_2$. Consider the point 
   \[ \mathbf{r_1}:= (r_{11}, r_{12},\ldots, r_{1n_1})  \in \big(\Lie G_\CC\big)^{n_1} \cong \big( \Lie A_\CC \times \Lie \GG_{m,\CC}^{s} \big)^{n_1} \]
with $r_{1i} \in  \mathcal{A}^{g+s}$ for $i=1,\ldots,n_1$. Using $ \mathbf{r_1}$ we construct the 1-motive $M_1=[u_1: \ZZ \to G^{n_1}]$, where $ u_1(1)=\mathbf{R_1} = \exp_{G^{n_1}} ( \mathbf{r_1} ).$ Similarly, consider the point
   \[ \mathbf{r_2}:= (r_{21}, r_{22},\ldots,r_{2n_2}) \in \big(\Lie G_\CC \big)^{n_2} \cong  \big( \Lie A_\CC \times \Lie \GG_{m,\CC}^{s} \big)^{n_2} \]
with $r_{2i} \in  \mathcal{C}$ for $i=1, \ldots , n_2, $ and let $M_2$ be the 1-motive $[u_2: \ZZ \to G^{n_2}]$, where $u_2(1) = \mathbf{R_2}= \exp_{G^{n_2}} ( \mathbf{r_2} )$. Fix an extended logarithm $\widetilde{\log}_{G^{n_1}}(\mathbf{R_1})$ of the point $\mathbf{R_1}$. With these notations, we construct the two fields
 \begin{align}
 \nonumber K_1 &:=\overline{\QQ \big(\mathbf{R_1}, \tlog_{G^{n_1}} (\mathbf{R_1}) \big)},\\
 \nonumber K_2 &:=\overline{\QQ \big(\Omega_G, \mathbf{R_2}, \tlog_{G^{n_2}}(\mathbf{R_2}) \big)}.
 \end{align}

 Observe that the fields generated by $\Omega_G$, $\Omega_{G^{n_2}}$ and $\Omega_{G^{n_1}}$ are the same. By \cite[Lemma 2]{PSS}, $\{\mathrm{components\; of} \; \mathbf{r_1}\} \cup \{e_1,\dots , e_k\}$ is algebraic over  $\oQQ \big(\mathrm{components\; of} \; \mathbf{R_1} \big) $ and this implies the following two facts
 \begin{equation}\label{eq:e}
  \oQQ(e_1,\dots , e_k) \subseteq K_1 \enspace,
 \quad  K_1 =\overline{\QQ \big(\mathbf{R_1},\widetilde{\log}''_{G^{n_1}}(\mathbf{R_1}) \big)} \enspace,
 \end{equation}
where $\widetilde{\log}''_{G^{n_1}}(\mathbf{R_1})$ are the integrals of the second kind associated to the point $\mathbf{R_1}$. By \cite[Lemma 3]{PSS}, $\{\mathrm{components\; of}  \; \mathbf{R_2}\} \cup \{\ell_1,\ldots,\ell_h\}$ is algebraic over the field $\oQQ \big(\Omega_G,  \mathbf{r_2}  \big) $ and this implies that
  \begin{equation}\label{eq:l}
  \oQQ(\ell_1,\ldots,\ell_h) \subseteq K_2\enspace,
  \quad K_2 = \overline{\QQ \big(\Omega_G, \tlog_{G^{n_2}}(\mathbf{R_2}) \big)}\enspace.
  \end{equation}
   By Relative Semi-abelian conjecture \ref{WSA-V2} we have 
  \begin{align}
   \label{eq:K1prov}\mathrm{tran.deg}_{\QQ (\Omega_G)}\, K_1(\Omega_G ) \geq 
  \dim Z^1(1) + 2 \dim B_Q^1,\\
\label{eq:>K2}
  \mathrm{tran.deg}_{\QQ (\Omega_G)}\,K_2 \geq 
 \dim Z^2(1) + 2 \dim B_Q^2,
 \end{align}
where $Z^i(1),B_Q^i $ are the pure motives attached as in Construction \ref{B-Z} and \eqref{BQ} to the 1-motives $M_i$ for $i=1,2$. Since $\mathrm{tran.deg}_{\QQ(\Omega_G)}\, K_1(\Omega_G)\leq \mathrm{tran.deg}_{\QQ}\, K_1$, the inequality (\ref{eq:K1prov}) entails:
\begin{equation}\label{eq:>K1}
\mathrm{tran.deg}_{\QQ} K_1 
\geq \dim Z^1(1) + 2 \dim B_Q^1. 
\end{equation}

By \eqref{eq:E} and Lemma \ref{lem:tildeR} having the point $u_1(1)=\mathbf{R_1}$ of  $G^{n_1}$ is equivalent to having a point $\widetilde{\mathbf{R}}_1$ of the $Z^1(1)$-torsor $E_1$ over $B^1_Q$. Thanks to Lemma \ref{lem:tildeR}, the torsor $E_1$ is defined over $\oQQ$ and therefore we have
 \[\mathrm{tran.deg}_{\QQ }\, \oQQ (\mathbf{R_1}) = \mathrm{tran.deg}_{\QQ }\, \oQQ (\widetilde{\mathbf{R}}_1)\leq \dim E_1 =  \dim Z^1(1) +  \dim B_Q^1. \]
As in (\ref{eq:B_QB_O}) the algebraic variety $B_Q^1$ is the translation of the abelian variety $B_O^1$ whose dimension is an upper bound for the transcendence degree of the field generated by the abelian integrals of the second kind associated to its points. Therefore
\[\mathrm{tran.deg}_{\QQ } \,K_1  \leq  \dim Z^1(1) + 2 \dim B_Q^1.  \]
Because of (\ref{eq:>K1}) we can conclude 
\begin{equation}\label{eq:=K1}
\mathrm{tran.deg}_{\QQ} K_1 = \dim Z^1(1) + 2 \dim B_Q^1. 
\end{equation}

As in the case of $\mathbf{R_1} \in G^{n_1}$, by \eqref{eq:E} and Lemma \ref{lem:tildeR} having the point $u_2(1)=\mathbf{R_2}$ of  $G^{n_2}$ is equivalent to having a point $\widetilde{\mathbf{R}}_2$ of the $Z^2(1)$-torsor $E_2$ over $B^2_Q$. Since $\Lie E_{2,\CC} = \Lie B^2_{Q,\CC} \times \Lie Z^2(1)_\CC$ has the same dimension as $E_2$ and
$\mathbf{r_2} = \log_{G^{n_2}} (\mathbf{R_2})$ is an element of $\Lie E_{2,\CC}$, we have that
 \[\mathrm{tran.deg}_{\QQ (\Omega_G)}\, \oQQ (\Omega_G ,\log_{G^{n_2}} (\mathbf{R_2}))\leq \dim \Lie E_{2,\CC} = \dim E_{2} =  \dim Z^2(1) +  \dim B_Q^2. \]
 Again, as in \eqref{eq:B_QB_O}, the algebraic variety $B_Q^2$ is the translation of the abelian variety $B_O^2$ whose dimension is an upper bound for the transcendence degree of the field generated by the abelian integrals of the second kind associated to its points. Therefore
\[\mathrm{tran.deg}_{\QQ(\Omega_G) } \,K_2 \leq \dim Z^2(1) + 2 \dim B_Q^2.  \]
Remembering (\ref{eq:>K2}) we can conclude 
  \begin{equation} \label{eq:=K2}
\mathrm{tran.deg}_{\QQ (\Omega_G)}\,K_2 =
\dim Z^2(1) + 2 \dim B_Q^2 .
\end{equation}

Consider now the point 
\[ \mathbf{r_3}:= (\mathbf{r_1},\mathbf{r_2})  \in \big(\Lie G_\CC\big)^{n_1+n_2} \cong \big( \Lie A_\CC \times \Lie \GG_{m,\CC}^{s} \big)^{n_1+n_2}, \]
and construct the 1-motive $M_3=[u_3: \ZZ \to G^{n_1+n_2}]$, where $ u_3(1)= \mathbf{R_3}= \exp_{G^{n_1+n_2}} ( \mathbf{r_3} ) = (\mathbf{R_1},\mathbf{R_2})$.
By Conjecture \ref{WSA-V2}  we have that 
\[
\mathrm{tran.deg}_{\QQ (\Omega_G)}\, K_3 \geq 
\dim Z^3(1) + 2 \dim B_Q^3 ,
\]
where $ K_3 :=\overline{\QQ \big(\Omega_G, \mathbf{R_3}, \tlog_{G^{n_1+n_2}}(\mathbf{R_3}) \big)}$ and $Z^3(1),B_Q^3 $ are the pure motives attached as in Construction \ref{B-Z} and \eqref{BQ} to the 1-motive $M_3$. Observe that by (\ref{eq:e}) and (\ref{eq:l})
\[ K_3 :=\overline{\QQ \big(\Omega_G, \mathbf{R_1}, \tlog''_{G^{n_1}}(\mathbf{R_1}), \tlog_{G^{n_2}}(\mathbf{R_2}) \big)}. \]
Since by construction $K_3=K_1K_2,$ we get 
\begin{equation} \label{eq:>K3}
\mathrm{tran.deg}_{\QQ (\Omega_G)}\, K_1 K_2 \geq 
\dim Z^3(1) + 2 \dim B_Q^3.
\end{equation}
Now from Lemma \ref{lem:last} (that we will show at the end of this proof), equalities (\ref{eq:=K1}), (\ref{eq:=K2}) and inequality (\ref{eq:>K3}), we get 
\[
\mathrm{tran.deg}_{\QQ (\Omega_G)}\, K_1 K_2 \geq
\mathrm{tran.deg}_{\QQ} K_1  + \mathrm{tran.deg}_{\QQ(\Omega_G)} K_2. \]
Adding $\mathrm{tran.deg}_{\QQ} \; \QQ (\Omega_G)$ to both sides of this last inequality, we obtain 
\[
\mathrm{tran.deg}_{\QQ} K_1 K_2 \geq
\mathrm{tran.deg}_{\QQ} K_1  + \mathrm{tran.deg}_{\QQ} K_2, \]
that is 
\[
\mathrm{tran.deg}_{\QQ} K_1 K_2 =
\mathrm{tran.deg}_{\QQ} K_1  + \mathrm{tran.deg}_{\QQ} K_2. \]
Therefore the fields $K_1$ and $ K_2$ are algebraically independent over $\oQQ.$
But this is a contradiction to our assumption that $\ell_1,\ldots,\ell_h \in K_2$ (\ref{eq:l}) are algebraically dependent over  
$\oQQ(e_1,\dots , e_k) \subseteq K_1 $ (\ref{eq:e}). This finishes the proof of our Main Theorem \ref{MainThm}.

\begin{lemma}\label{lem:last}
	With the above notations 
	\[ 	\dim Z^3(1) + 2 \dim B_Q^3 =
	\dim Z^1(1) +	\dim Z^2(1) + 2 (\dim B_Q^1 + \dim B_Q^2) .\]
\end{lemma}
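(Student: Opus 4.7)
The plan is to prove Lemma~\ref{lem:last} by establishing the two separate additivities
\[
\dim B_Q^3=\dim B_Q^1+\dim B_Q^2,\qquad \dim Z^3(1)=\dim Z^1(1)+\dim Z^2(1).
\]
Throughout I view the $M_i$ uniformly as $[\ZZ^{n_i}\to G]$, so that the extension $G$ and its parameters $\mathbf{Q}=(Q_1,\dots,Q_s)$ are common to $M_1,M_2,M_3$; hence $B_{v^*}^1=B_{v^*}^2=B_{v^*}^3=:B_{v^*}$. Since $\dim B^i=\dim B_{v^*}+\dim B_Q^i$, the two displayed equalities together yield the lemma.

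The additivity for $B_Q^\bullet$ reduces to showing $B^3=B^1\times_{B_{v^*}}B^2$. The inclusion $B^3\subseteq B^1\times_{B_{v^*}}B^2$ is immediate: the projections $\pi_j:A^{n_1+n_2}\times (A^*)^s\to A^{n_j}\times (A^*)^s$ send $B^3$ to abelian subvarieties containing $(\mathbf{P}_j,\mathbf{Q})$, hence containing $B^j$ by minimality, while $\pi_j(B^3)\subseteq\pi_j(B^1\times_{B_{v^*}}B^2)=B^j$, so $\pi_j(B^3)=B^j$. To rule out a strict inclusion, I invoke a Goursat-type argument: any abelian subvariety $B^3\subsetneq B^1\times_{B_{v^*}}B^2$ surjecting onto both factors must be the graph of a non-trivial isogeny between quotients of $B^1$ and $B^2$ over $B_{v^*}$, and this isogeny is defined over $\oQQ$ since $B^1,B^2,B_{v^*}$ are. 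Evaluating it at $(\mathbf{P_1},\mathbf{Q})$ and $(\mathbf{P_2},\mathbf{Q})$ would produce a non-trivial polynomial relation with $\oQQ$-coefficients linking the components of $\mathbf{P_1}$ and $\mathbf{P_2}$. By construction the components of $\mathbf{R_1}$ lie in $\cE_m$ and those of $\mathbf{R_2}$ lie in $\cL_{n-1}$ (the latter via \cite[Lemma~3]{PSS}, since $\exp_G(\mathcal{C})\subseteq G(\cL_{n-1})$), and the same holds for the projections $\mathbf{P_1},\mathbf{P_2}$. But $(m,n-1)<(m,n)$, so the minimality hypothesis on the pair $(m,n)$ forces $\cE_m$ and $\cL_{n-1}$ to be algebraically independent over $\oQQ$, forbidding such a relation. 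Hence $B^3=B^1\times_{B_{v^*}}B^2$.

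For the torus side, I use the explicit form of the Lie bracket $[\cdot,\cdot]:B^3\otimes B^3\to \GG_m^{s(n_1+n_2)}$ from \cite[(2.8.4)]{B03}. Restricted from $(B^1\times_{B_{v^*}}B^2)^{\otimes 2}$ the bracket splits into $\GG_m^{sn_1}\oplus\GG_m^{sn_2}$: the cross-terms pair a $P$-coordinate of one factor with a $Q$-coordinate of the other, but both $Q$-coordinates equal the common $\mathbf{Q}$, so these pairings already belong to the image of the bracket on $B^1$ or $B^2$ individually. Therefore $Z'^3=Z'^1\oplus Z'^2$ as Galois-submodules of $\ZZ^{sn_1}\oplus\ZZ^{sn_2}$. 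Since $\pi(\widetilde{\mathbf{R}}_3)=(\pi(\widetilde{\mathbf{R}}_1),\pi(\widetilde{\mathbf{R}}_2))$, the smallest Galois submodule $Z^3$ containing $Z'^3$ together with $\pi(\widetilde{\mathbf{R}}_3)$ coincides with $Z^1\oplus Z^2$, giving the desired dimension equality for the tori.

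The main obstacle is the Goursat step: translating a geometric codimension in $B^1\times_{B_{v^*}}B^2$ into a genuine polynomial relation over $\oQQ$ between the components of $\mathbf{P_1}$ and $\mathbf{P_2}$ that is excluded by the transcendence-theoretic independence of $\cE_m$ and $\cL_{n-1}$. One must also handle the degenerate case where some components of the $\mathbf{P_j}$ already belong to $\oQQ$, but in that situation the corresponding contributions to $\dim B_Q^j$ vanish and the additivity is tautological on those components.
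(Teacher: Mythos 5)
Your strategy is in the same spirit as the paper's but with two genuine gaps, both of which the paper closes by a single unified argument that you do not reproduce.

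The paper does not try to split the identity into the two separate equalities $\dim B_Q^3=\dim B_Q^1+\dim B_Q^2$ and $\dim Z^3(1)=\dim Z^1(1)+\dim Z^2(1)$. Instead it works with the semi-abelian varieties $\tE_i$ (the translates $E_i-\mathbf{R}_i$), which package both pieces: $\dim\tE_i=\dim B_Q^i+\dim Z^i(1)$, and all that is needed is $\tE_3=\tE_1\times\tE_2$. The inclusion $\tE_3\subseteq\tE_1\times\tE_2$ is immediate, and the reverse is proved by writing $\tE_3=\ker\lambda$ with $\lambda=(\lambda_1,\lambda_2)\colon\tE_1\times\tE_2\to\tE'$ defined over $\oQQ$, choosing an algebraic specialisation $S=(S_1,S_2)\in E_3(\oQQ)$, and observing that the point $\mathbf{T}=\lambda_1(S_1-\mathbf{R}_1)=-\lambda_2(S_2-\mathbf{R}_2)$ lies simultaneously in $\tE'(\cE_m)$ and $\tE'(\cL_{n-1})$, hence in $\tE'(\oQQ)$ by the minimality of the pair $(m,n)$. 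The condition $\lambda_1(S_1-R_1)=\mathbf{T}$ then cuts out a $\oQQ$-defined translate of $\ker\lambda_1$ inside $G^{n_1}$ containing $\mathbf{R}_1$; the minimality of $E_1$ forces $E_1$ to lie inside it, so taking $R_1=S_1$ yields $\mathbf{T}=0$, whence $\tE_i\subseteq\ker\lambda_i$ for $i=1,2$ and $\tE_1\times\tE_2\subseteq\ker\lambda=\tE_3$.

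Your Goursat step is exactly where this specialisation argument is hiding, and you flag it yourself as ``the main obstacle'' without resolving it. Merely noting that a strict inclusion $B^3\subsetneq B^1\times_{B_{v^*}}B^2$ would give a $\oQQ$-isogeny relation at $(\mathbf{P}_1,\mathbf{Q})$ and $(\mathbf{P}_2,\mathbf{Q})$ is not enough: a single $\oQQ$-algebraic equation satisfied by the coordinates of $\mathbf{P}_1\in A^{n_1}(\cE_m)$ and $\mathbf{P}_2\in A^{n_2}(\cL_{n-1})$ does not automatically contradict the freeness of $\cE_m$ and $\cL_{n-1}$ over $\oQQ$; one must show, as the paper does, that the ``coupling'' is a $\oQQ$-point and then specialise it to zero. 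The second gap is on the torus side: from $Z'^3=Z'^1\oplus Z'^2$ and $\pi(\widetilde{\mathbf{R}}_3)=(\pi_1(\widetilde{\mathbf{R}}_1),\pi_2(\widetilde{\mathbf{R}}_2))$ you only get $Z^3\subseteq Z^1\oplus Z^2$; equality can fail if there is a $\oQQ$-multiplicative relation between the components of $\pi_1(\widetilde{\mathbf{R}}_1)$ and $\pi_2(\widetilde{\mathbf{R}}_2)$, so ruling this out requires a second application of the same independence-and-specialisation argument (or, as in the paper, running the argument once at the level of $\tE_i$ so that it covers the torus part automatically). Until both of these are supplied the proof is incomplete.
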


\begin{proof} By Lemma \ref{lem:tildeR} and \eqref{def:tE}, the $Z^i(1)$-torsor
		 $E_i$ over $B^i_Q$ is the smallest translate of a semi-abelian sub-variety $\tE_i$ of $G^{n_i}$, defined over $\oQQ$, which contains the point $\mathbf{R}_i$ (for $i=1,2,3$). Recall that ${\tE}_i$ is an extension of the abelian variety $B_O^i=B^i_Q-\pi({\bf R}_i)$ by the torus $Z^i(1)$ (here $\pi:G^{n_i} \to A^{n_i}$ is the projection underlying the semi-abelian variety $G^{n_i}$). In particular we have
	\begin{equation}\label{eq:dimE}
	\dim \tE_i =\dim E_i  = \dim Z^i(1) +\dim B^i_Q .
	\end{equation}
	By definition $\mathbf{R}_3=(\mathbf{R}_1,\mathbf{R}_2)\in E_1\times E_2$, which implies that
	\begin{equation}\label{inclE}
	E_3\subset E_1\times E_2 \qquad \mathrm{and} \qquad \widetilde E_3\subset\widetilde E_1\times\widetilde E_2.
	\end{equation}
	Since $B^i_Q=\pi(E_i)$, we have that $B^3_Q\subset B^1_Q\times B^2_Q$ and so
	\begin{equation}\label{ineqB}
	\dim B^3_Q \le\dim B^1_Q +\dim B^2_Q .
	\end{equation}
	Similarly, we have $Z^3(1)\subset Z^1(1)\times Z^2(1)$ and so
	\begin{equation}\label{ineqZ}
	\dim Z^3(1) \le \dim Z^1(1) + \dim Z^2(1) .
	\end{equation}
	In order to prove our Lemma, it suffices to check that 
	\begin{equation}\label{eq:dim123}
	\dim \tE_3 = \dim \tE_1 +\dim \tE_2
	\end{equation}
	 since, together with (\ref{eq:dimE}), (\ref{ineqB}) and (\ref{ineqZ}), it will imply
	\[\begin{array}{c}
	\dim B^3_Q =\dim B^1_Q +\dim B^2_Q ,\\[2mm]
	\dim Z^3(1) =\dim Z^1(1) +\dim Z^2(1) .
	\end{array}\]
As observed in Remark \ref{rk:defEtE} and Lemma \ref{lem:tildeR}, for $i=1,2,3$ both $E_i$ and $\tE_i = E_i - \mathbf{R}_i$ are defined over $\oQQ$ and thus $E_i(\oQQ)\not=\emptyset$. By (\ref{inclE})  the group sub-variety $\tE_3$ of the product $\tE_1 \times \tE_2$ fits into an exact sequence of group varieties defined over $\oQQ$
$$0 \rightarrow \tE_3 \hookrightarrow \tE_1 \times \tE_2 \xrightarrow{\lambda} \tE' \rightarrow 0,$$
where the group morphism $\lambda$ is $(\lambda_1,\lambda_2)$, with $\lambda_i \in \Hom(\tE_i,\tE') $ for $i=1,2$. In particular $\lambda(a,b)= \lambda_1(a) + \lambda_2(b)$. Since $\tE_3=\Tr^*_{\mathbf{R}_3}(E_3)=E_3-\mathbf{R}_3$, the morphism $\lambda\circ\Tr_{-\mathbf{R}_3}\vert_{E_1\times E_2} = (\lambda_1\circ\Tr_{-\mathbf{R}_1}\vert_{E_1},\lambda_2\circ\Tr_{-\mathbf{R}_2}\vert_{E_2}):E_1\times E_2 \to \tE'$ vanishes on $E_3$.

Fix a point $S=(S_1,S_2)\in E_3(\oQQ)$ which is a specialisation of $\mathbf{R}_3$. Since $S-\mathbf{R}_3$ belongs to $\tE_3$, the point $\lambda(S-\mathbf{R}_3)$ is zero in $\tE',$ i.e. $\lambda_1(S_1-\mathbf{R}_1)+\lambda_2(S_2-\mathbf{R}_2)=0$. Recall that $\mathcal{A} \subseteq {\cE}_{m -1}$ and so $\lambda_1(S_1-\mathbf{R}_1) \in \tE'(\cE_m)$. On the other hand, the subset $\mathcal{C} $ is such that  $\exp_G(\mathcal{C}) \subseteq G(\cL_{n-1})$ and so $\lambda_2(S_2-\mathbf{R}_2) \in \tE'(\cL_{n-1})$. From the choice of the pair $(m,n)$, $\cE_m$ and $ \cL_{n-1}$ are algebraically independent over $\oQQ$, and so $\cE_m \cap \cL_{n-1} = \oQQ$. Hence the point $\mathbf{T}=\lambda_1(S_1-\mathbf{R}_1)=-\lambda_2(S_2-\mathbf{R}_2) $ belongs to $ \tE'(\oQQ)$.

The equality $\mathbf{T}= \lambda_1(S_1-\mathbf{R}_1) $ is expressed by polynomial equations in some projective embeddings of $\tE'$ and $ \tE_1 \times \tE_2$. These equations hold for any specialization of $\mathbf{R}_1$. Thus $\mathbf{T}=\lambda_1(S_1-R_1)$ for any specialization $R_1$ of $\mathbf{R}_1$. In particular, for $R_1=S_1$ we get $\mathbf{T}=0$ and $\mathbf{R}_i\in S_i+\ker\lambda_i$ for $i=1,2$. 
 Since $E_i$ is the smallest translate of a semi-abelian variety defined over $\oQQ$ and containing $\mathbf{R}_i$, we deduce $E_i\subset S_i+\ker\lambda_i$. But then $\tE_i\subset \ker\lambda_i$ and thus $\tE_1\times\tE_2 \subset \ker\lambda=\tE_3$ for $i=1,2$. Together with \eqref{inclE} it follows that $\tE_3=\tE_1 \times \tE_2$ and in particular we get the expected formula (\ref{eq:dim123}).
\end{proof}


\section{Application}\label{application}

Let $E \simeq \CC/\Lambda$ be an elliptic curve defined over $\CC$ and $E^*$ its dual. We identify $E^*$ with $\CC/\Lambda^*,$ where $z^*\in \CC$ is viewed as a $\CC$-antilinear form $z\to z^*(z)=\overline{z}z^*$ on $\CC$ and
$$\Lambda^* = \{\lambda^*\in\CC; \; {\rm Im}(\overline{\Lambda}\lambda^*)\subset \ZZ\}.$$
Let $\omega_{1},\omega_{2},\eta_{1},
\eta_{2}$ be the periods and the quasi-periods of $E$, and let $\wp(z),\zeta (z)$ and $\sigma(z)$ be the Weierstrass functions relative to the lattice $\Lambda = \HH_1(E(\CC),\ZZ)$.

Write $\Lambda=\ZZ\omega_1+\ZZ \omega_2$ and $\Lambda^*=
\ZZ \omega^*_1+\ZZ \omega^*_2$ with $\omega_1,\omega^*_1 \in \RR$. Then ${\rm Im}(\overline{\omega}_i\Lambda^*)\subset \ZZ$, $i=1,2$, gives $\omega^*_1=\frac{\omega_1}{{\rm Im}(\omega_1\overline{\omega}_2)}$ and $\omega^*_2= - \frac{\omega_2}{{\rm Im}(\omega_1\overline{\omega}_2)}$. We identify the real Lie algebra $\Lie E_\RR$ of $E$ (\emph{resp.} $\Lie E^*_\RR$) with $\Lambda\otimes_\ZZ \RR $ (\emph{resp.} $\Lambda^*\otimes_\ZZ \RR$). For $z=\alpha_1{\omega}_1+\alpha_2{\omega}_2\in \Lie E_\RR $ and $z^*=\alpha^*_1\omega^*_1+\alpha^*_2\omega^*_2\in\Lie E^*_\RR,$ one computes easily
\begin{equation}\label{dualityproduct}
\alpha_1 = \frac{1}{\omega_1}\left({\rm Re}(z)-\frac{{\rm Re}(\omega_2)}{{\rm Im}(\omega_2)}{\rm Im}(z)\right)\quad\mbox{and}\quad \alpha_2 = \frac{1}{{\rm Im}(\omega_2)}{\rm Im}(z)
\end{equation}
and similarly for $\alpha^*_1$ and $\alpha^*_2$. Thus, the duality product is
\begin{equation}\label{dualityprodut}
\langle z,z^*\rangle := {\rm Im}(\overline{z}z^*) = {\rm Re}(z){\rm Im}(z^*) - {\rm Re}(z^*){\rm Im}(z) = \alpha^*_1\alpha_2 - \alpha^*_2\alpha_1.
\end{equation}

We may identify $E^*$ and $E$ via the map $\rho:z^*\mapsto z^*\mathrm{Im}(\omega_1\overline{\omega}_2)$.

In  Table \ref{tablecorr} we summarize the correspondence between the usual data for an abelian variety $A$ and an elliptic curve $E$.
\begin{table}[t]{\renewcommand{\arraystretch}{1.5}
    \caption{Analogies between an elliptic curve and an abelian variety}\label{tablecorr}
	\begin{tabular}{|c|c|}
    \hline
		elliptic curve $E$ & abelian variety $A$\\ 
	\hline
		1 & $g$\\
	\hline
		$\gamma_1,\gamma_2$ & $\gamma_1,\dots,\gamma_{2g}$\\
	\hline
		$\omega$ & $\omega_1,\dots,\omega_g$\\
	\hline
		$\eta$ & $\eta_1,\dots,\eta_g$\\
	\hline
		$[\wp(z):\wp'(z):1]$ & $\exp_A(z)$\\
	\hline
		$\zeta(z)$ & $h_1(z),\dots,h_g(z)$\\
	\hline
		$\sigma(z)$ & $\theta(z)$\\
	\hline
		$f_q(z) = \frac{\sigma(z+q)}{\sigma(z)\sigma(q)} e^{-\zeta(q) z }$ & $F_q(z)= \frac{\theta(z+q)}{\theta(z)\theta(q)}  e^{- \sum_{i=1}^g h_i(q) z_i }$\\[3pt]
	\hline
	\end{tabular}}
\end{table} 
which is the elliptic analogue of the function $F_q(z)$ defined in \eqref{eq:def-Fq}. 
 The logarithmic differential of this function is the pull-back via the exponential map of a differential of the third kind $\xi_Q$ on $E.$ Serre's function has quasi-quasi periods equal to the exponentials of the elliptic integrals of the third kind  $\int_{\gamma_i} \xi_Q = \eta_i q - \omega_i \zeta(q)$ for $i=1,2$ (see \cite[Appendice II by J.-P. Serre]{W} and \cite[(1.8)]{B19}).

Note that in the abelian case the function $\theta(z)$ is chosen so that  $\theta(0)=1$ and the factor of automorphy of the function $\frac{\theta (z+\rho(z^*))}{\theta(z)\theta(\rho(z^*))}$ coincides with the factor of automorphy of the Poincar\'e line bundle  $\cP$. In the elliptic case the function $\sigma(z)$ differs from this choice by a rational function on $E$ and satisfies $\sigma(0)=0, $ as we will see now.

The Poincar\'e biextension  $\cP$ of $(E, E^*)$ by $\GG_m$ is the line bundle defined as
$$\cP := (\Lie E_\CC \times \Lie E_\CC^* \times \GG_m)/(\Lambda\times\Lambda^*) $$
where the action of $(\lambda,\lambda^*)\in\Lambda\times\Lambda^*$ is given by 
$(z,z^*,t)\mapsto(z+\lambda,z^*+\lambda^*,t \cdot a(\lambda,\lambda^*,z,z^*))$,
with the factor of automorphy
\begin{equation}\label{autofactor}
a(\lambda,\lambda^*,z,z^*) = e^{\pi(\overline{\lambda}\lambda^*+z\overline{\lambda}^*+\overline{\lambda}z^*)} = e^{\pi (\lambda\overline{\lambda}^*+z\overline{\lambda}^*+\overline{\lambda}z^*)},
\end{equation}
since ${\rm Im}(\overline{\lambda}\lambda^*) \in \ZZ$, see \cite[\S2.5, pages 37-41]{BL}.

\medskip

\begin{lemma}\label{lem:eta}
	 For $z=\alpha_1\omega_1+\alpha_2\omega_2 \in \Lie E_\RR$, the number $\eta(z) = \alpha_1\eta_1+\alpha_2\eta_2$ reads
	\begin{equation}\label{eq:eta}
	\eta(z) = \frac{\eta_1}{\omega_1}z - \frac{2{\rm i}\pi}{{\rm Im}(\omega_1\overline{\omega}_2)}{\rm Im}(z).
	\end{equation}
	The function $\sigma(z)$ is a theta function with factor of automorphy 
	$$\psi(\lambda)e^{\eta(\lambda)(z+\frac{\lambda}{2})} = \psi(\lambda)e^{\left(\frac{\eta_1}{\omega_1}\lambda - \frac{\pi}{{\rm Im}(\omega_1\overline{\omega}_2)}(\lambda-\overline{\lambda})\right)(z+\frac{\lambda}{2})} = \psi(\lambda)e^{\frac{\pi(\overline{\lambda}+A\lambda)(z+\lambda/2)}{{\rm Im}(\omega_1\overline{\omega}_2)}}$$
	where $\pi A:=\eta_1{\rm Im}(\overline{\omega}_2)-\pi$ and $\psi(\lambda)=\begin{cases}+1&\text{if\ }\lambda\in2\Lambda\\-1&\text{otherwise\ }\end{cases}$.
\end{lemma}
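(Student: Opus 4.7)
The plan is to reduce both parts to the classical Legendre relation
\[ \eta_2\omega_1 - \eta_1\omega_2 = 2\pi{\rm i}, \]
combined with the standard transformation formula for the Weierstrass $\sigma$-function, and then to clean up signs using the hypothesis $\omega_1\in\RR$.

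For formula \eqref{eq:eta}, I would start by computing
\[ \frac{\eta_1}{\omega_1}z - \eta(z) \;=\; \frac{\eta_1}{\omega_1}(\alpha_1\omega_1+\alpha_2\omega_2) - (\alpha_1\eta_1+\alpha_2\eta_2) \;=\; \alpha_2\,\frac{\eta_1\omega_2-\eta_2\omega_1}{\omega_1} \;=\; -\alpha_2\,\frac{2\pi{\rm i}}{\omega_1} \]
by Legendre. Since $\omega_1$ is real, the formulas \eqref{dualityproduct} recalled just above give $\alpha_2={\rm Im}(z)/{\rm Im}(\omega_2)$, and the reality of $\omega_1$ also yields $\omega_1\,{\rm Im}(\omega_2)=-{\rm Im}(\omega_1\overline{\omega}_2)$. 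Substituting both identities turns the right-hand side into $+\frac{2\pi{\rm i}\,{\rm Im}(z)}{{\rm Im}(\omega_1\overline{\omega}_2)}$, which after transposition is exactly \eqref{eq:eta}.

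For the second assertion, I would invoke the classical transformation law of the Weierstrass sigma function (see e.g.\ Whittaker--Watson, or Silverman, Advanced Topics in the Arithmetic of Elliptic Curves, ch.~I):
\[ \sigma(z+\lambda) \;=\; \psi(\lambda)\,e^{\eta(\lambda)(z+\lambda/2)}\,\sigma(z), \qquad \lambda\in\Lambda, \]
with $\psi(\lambda)=+1$ if $\lambda\in 2\Lambda$ and $\psi(\lambda)=-1$ otherwise (this is where the $\pm 1$ sign comes from: for $\lambda\in 2\Lambda$ one writes $\lambda=2\mu$ and an odd/even computation with the odd function $\sigma$ yields $+1$). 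Applying formula \eqref{eq:eta} to $\eta(\lambda)$ and writing $2{\rm i}\,{\rm Im}(\lambda)=\lambda-\overline{\lambda}$ gives directly the middle expression in the lemma:
\[ \eta(\lambda)\bigl(z+\tfrac{\lambda}{2}\bigr) \;=\; \left(\frac{\eta_1}{\omega_1}\lambda - \frac{\pi}{{\rm Im}(\omega_1\overline{\omega}_2)}(\lambda-\overline{\lambda})\right)\bigl(z+\tfrac{\lambda}{2}\bigr). \]
It remains to rewrite the coefficient of $z+\lambda/2$ in the form $\pi(\overline{\lambda}+A\lambda)/{\rm Im}(\omega_1\overline{\omega}_2)$; collecting the $\lambda$-term yields the defining equation $\pi A=\frac{\eta_1}{\omega_1}{\rm Im}(\omega_1\overline{\omega}_2)-\pi$, and using once more that $\omega_1$ is real so that $\frac{{\rm Im}(\omega_1\overline{\omega}_2)}{\omega_1}={\rm Im}(\overline{\omega}_2)$, this is precisely $\pi A=\eta_1\,{\rm Im}(\overline{\omega}_2)-\pi$.

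The only genuinely delicate point is bookkeeping of signs: the exact sign in Legendre's relation depends on the ordering of $\omega_1,\omega_2$, and the identity $\omega_1\,{\rm Im}(\omega_2)=-{\rm Im}(\omega_1\overline{\omega}_2)$ must be used consistently. Everything else is mechanical substitution.
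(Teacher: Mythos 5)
Your proof is correct and follows essentially the same route as the paper's: for \eqref{eq:eta} both arguments substitute the formulas \eqref{dualityproduct} for $\alpha_1,\alpha_2$, invoke Legendre's relation $\eta_2\omega_1-\eta_1\omega_2=2\pi{\rm i}$, and use $\omega_1\,{\rm Im}(\omega_2)=-{\rm Im}(\omega_1\overline{\omega}_2)$ (valid since $\omega_1\in\RR$), the only difference being that you reorganize the computation around $\frac{\eta_1}{\omega_1}z-\eta(z)$ while the paper regroups real and imaginary parts directly. For the second assertion the paper simply cites Lang for the transformation law of $\sigma$; you do the same and additionally verify the two algebraic rewritings of $\eta(\lambda)(z+\lambda/2)$ appearing in the displayed equality, which is a harmless elaboration.
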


\begin{proof}
	By the definition of $\eta(z)$ and \eqref{dualityproduct}, we have
	\begin{align*}
	\eta(z) &= \frac{\eta_1}{\omega_1}\left({\rm Re}(z)-\frac{{\rm Re}(\omega_2)}{{\rm Im}(\omega_2)}{\rm Im}(z)\right) + \frac{\eta_2}{{\rm Im}(\omega_2)}{\rm Im}(z)\\
	&= \frac{\eta_1}{\omega_1}{\rm Re}(z) + \frac{\eta_2\omega_1-\eta_1{\rm Re}(\omega_2)}{\omega_1{\rm Im}(\omega_2)}{\rm Im}(z)\\
	&= \frac{\eta_1}{\omega_1}z - \frac{\eta_2\omega_1-\eta_1\omega_2}{{\rm Im}(\omega_1\overline{\omega}_2)}{\rm Im}(z)\\
	\end{align*}
	which gives the result using Legendre's formula ({\it see} \cite[Chap.18, \S1, pp.241]{L87}). The factor of automorphy of the sigma function is given in \cite[Chap.18, \S1, Thm 1]{L87}).
\end{proof}

\medskip

\begin{corollary}\label{cor:theta-sigma}
	The theta function 
	\[  \tilde{\theta}(z) = \sigma(z)e^{\frac{-\pi Az^2}{2{\rm Im}(\omega_1\overline{\omega}_2)}}\]
	is such that  $\frac{\tilde{\theta}(z+\rho(z^*))}{\tilde{\theta}(z)\tilde{\theta}(\rho(z^*))}$ 
	is a meromorphic section of the pull-back $(\exp_{E} \times \exp_{ E^*})^* \cP$ of the Poincar\'e biextension on $\Lie E_\RR \times \Lie E^*_\RR$. Explicitly, this meromorphic section is given by
	\begin{align}
	\nonumber	 \Lie E_\CC \times \Lie E^*_\CC& \longrightarrow \GG_{m} \\
	\nonumber (z,z^*) & \longmapsto   \frac{\sigma(z+\rho(z^*))}{\sigma(z)\sigma(\rho(z^*))} e^{\frac{-\pi Az z^*}{{\rm Im}(\omega_1\overline{\omega}_2)}}.
	\end{align} 
\end{corollary}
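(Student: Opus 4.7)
The plan is to deduce the corollary by a direct computation of factors of automorphy, using Lemma~\ref{lem:eta} as input. First, I would determine the factor of automorphy of $\theta(z)=\sigma(z)\exp(-\pi Az^2/(2\mathrm{Im}(\omega_1\bar\omega_2)))$ under translation by $\lambda\in\Lambda$. Combining the factor of automorphy of $\sigma$ given by Lemma~\ref{lem:eta}, which contains an exponent proportional to $\bar\lambda+A\lambda$, with the contribution $\exp(-\pi A\lambda(z+\lambda/2)/\mathrm{Im}(\omega_1\bar\omega_2))$ coming from the corrective exponential, the $A\lambda$ summands cancel and I obtain
\[
\frac{\theta(z+\lambda)}{\theta(z)} \;=\; \psi(\lambda)\,\exp\!\left(\frac{\pi\bar\lambda(z+\lambda/2)}{\mathrm{Im}(\omega_1\bar\omega_2)}\right), \qquad \lambda\in\Lambda,
\]
which identifies $\theta$ as a theta function for the Hermitian form $H(z,w)=z\bar w/\mathrm{Im}(\omega_1\bar\omega_2)$ with semi-character $\psi$.

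Setting $f(z,z^*):=\theta(z+z^*)/(\theta(z)\theta(z^*))$, I would then verify that $f$ transforms under $(\lambda,\lambda^*)\in\Lambda\times\Lambda^*$ by the Poincar\'e factor of automorphy~\eqref{autofactor}, which is exactly the condition for $f$ to descend to a meromorphic section of $(\exp_E\times\exp_{E^*})^*\cP$. For shifts $(\lambda,0)$ the computation is immediate from Step~1: the semi-character $\psi(\lambda)$ and the $\lambda/2$ contributions cancel between $\theta(z+z^*+\lambda)/\theta(z+z^*)$ and $\theta(z+\lambda)/\theta(z)$, leaving exactly $\exp(\pi\bar\lambda z^*/\mathrm{Im}(\omega_1\bar\omega_2))=a(\lambda,0,z,z^*)$. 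The case of shifts $(0,\lambda^*)$ is treated symmetrically by exploiting that the expression $f$ is formally symmetric in $(z,z^*)$, together with the integrality relation $\mathrm{Im}(\bar\Lambda\,\lambda^*)\subset\ZZ$ defining $\Lambda^*$, yielding $a(0,\lambda^*,z,z^*)$. By bi-multiplicativity these two cases combine to give the full factor of automorphy~\eqref{autofactor}.

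To conclude, I would substitute $\theta(z)=\sigma(z)\exp(-\pi A z^2/(2\mathrm{Im}(\omega_1\bar\omega_2)))$ back into $f(z,z^*)$ and expand using $(z+z^*)^2-z^2-(z^*)^2=2zz^*$ to produce the explicit formula claimed in the corollary. The principal obstacle lies in the verification for $\Lambda^*$-shifts in the second step: since $\sigma$ is quasi-periodic only with respect to $\Lambda$, one cannot apply Lemma~\ref{lem:eta} directly to the second variable, and the argument must go through the $(z,z^*)$-symmetry of $f$ combined with the defining condition of the dual lattice, with any residual sign absorbed into the semi-character implicit in~\eqref{autofactor} in parallel with the role of $\psi$ for $\sigma$.
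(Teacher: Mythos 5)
Your Step~1 (computing the factor of automorphy of $\theta(z)=\sigma(z)e^{-\pi Az^2/(2\mathrm{Im}(\omega_1\bar\omega_2))}$ by cancelling the $A\lambda$ contributions) is exactly the paper's computation and is correct. The final substitution step, using $(z+z^*)^2-z^2-(z^*)^2=2zz^*$, also matches the paper.

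The difference lies in how you organise the verification of the Poincar\'e factor of automorphy for $f(z,z^*)=\theta(z+z^*)/(\theta(z)\theta(z^*))$: you decompose a general shift into a $(\lambda,0)$ step and a $(0,\lambda^*)$ step and recombine by the cocycle relation, whereas the paper computes the ratio for a general $(\lambda,\lambda^*)\in\Lambda\times\Lambda^*$ in one pass, plugging the Step~1 transformation formula into $\theta(\cdot+\lambda)$, $\theta(\cdot+\lambda^*)$ and $\theta(\cdot+\lambda+\lambda^*)$ and simplifying the exponent to ${\rm i}\pi\,\mathrm{Im}(\bar\lambda\lambda^*)/\mathrm{Im}(\omega_1\bar\omega_2)$ plus $\pi\bigl(\mathrm{Re}(\bar\lambda\lambda^*)+\bar\lambda^*z+\bar\lambda z^*\bigr)/\mathrm{Im}(\omega_1\bar\omega_2)$, which is then recognised as \eqref{autofactor}. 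Your $(\lambda,0)$ computation is correct and agrees with the $\lambda^*=0$ specialisation of the paper's formula.

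The genuine gap is in your treatment of $(0,\lambda^*)$-shifts. Symmetry of $f$ gives
\begin{equation*}
\frac{f(z,z^*+\lambda^*)}{f(z,z^*)}=\frac{f(z^*+\lambda^*,z)}{f(z^*,z)},
\end{equation*}
but this only converts a $\Lambda^*$-shift in the second variable into a $\Lambda^*$-shift in the first; since $\lambda^*$ generically does not lie in $\Lambda$, the transformation rule for $\theta$ that you established from Lemma~\ref{lem:eta} (valid only for $\Lambda$-shifts) still cannot be applied, and the integrality relation $\mathrm{Im}(\bar\Lambda\lambda^*)\subset\ZZ$ does not by itself produce a factor-of-automorphy identity for $\theta(\cdot+\lambda^*)/\theta(\cdot)$. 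You correctly identify this as \emph{``the principal obstacle''}, but the proposed resolution (symmetry plus integrality, with a residual sign absorbed into a semi-character) does not close it. The paper does not encounter this cleanly-isolated difficulty because its single-pass computation applies the Step~1 exponential formula directly to the argument $\lambda^*$ (and $\lambda+\lambda^*$); to make either argument fully rigorous one must justify the extension of the $\theta$-transformation beyond $\Lambda$, which your write-up acknowledges but does not supply a valid mechanism for.
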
 

\begin{proof}
The factor of automorphy of the trivial theta function $e^{\frac{\pi Az^2}{2{\rm Im}(\omega_1\overline{\omega}_2)}}$ is $e^{\frac{\pi A\lambda(z+\lambda/2)}{{\rm Im}(\omega_1\overline{\omega}_2)}}$. Thus, by the above Lemma the factor of automorphy of the theta function $\tilde{\theta}(z)= \sigma(z)e^{\frac{-\pi Az^2}{2{\rm Im}(\omega_1\overline{\omega}_2)}}$ is \\ $\psi(\lambda)e^{\frac{\pi\overline{\lambda}(z+\lambda/2)}{{\rm Im}(\omega_1\overline{\omega}_2)}}.$
	
	The factor of automorphy of the theta function $\frac{\tilde{\theta}(z+\rho(z^*))}{\tilde{\theta}(z)\tilde{\theta}(\rho(z^*))}$ on $\CC^2$ relatively to $\Lambda\times\Lambda^*$ is
	$$\frac{\psi(\lambda+\rho(\lambda^*))}{\psi(\lambda)\psi(\rho(\lambda^*))}\times
	e^{\pi\left(\frac{(\overline{\lambda}+\rho(\overline{\lambda}^*))(z+\rho(z^*)+(\lambda+\rho(\lambda^*))/2)}{{\rm Im}(\omega_1\overline{\omega}_2)}-\frac{\overline{\lambda}(z+\lambda/2)}{{\rm Im}(\omega_1\overline{\omega}_2)}-\frac{\overline{\rho(\lambda}^*)(\rho(z^*)+\rho(\lambda^*)/2)}{{\rm Im}(\omega_1\overline{\omega}_2)}\right)},$$
	which is equal to
	$$e^{{\rm i}\pi{\rm Im}(\overline{\lambda}\lambda^*)}\times e^{\pi({\rm Re}(\overline{\lambda}\lambda^*)+\overline{\lambda}^*z+\overline{\lambda}z^*)}.$$
	We recognise the factor of automorphy \eqref{autofactor}, so that $\frac{\tilde{\theta}(z+\rho(z^*))}{\tilde{\theta}(z)\tilde{\theta}(\rho(z^*))}$ is a meromorphic section of the pull-back $(\exp_{E} \times \exp_{E^*})^* \cP$ of the Poincar\'e biextension on $\Lie E_\RR \times \Lie E^*_\RR$.
\end{proof}

Observe that the theta functions $\theta(z) $ and $\tilde{\theta}(z)$ have the same factor of automorphy and so their ratio $\frac{\theta(z)}{\tilde{\theta}(z)}$ is a rational function on $E$.

\subsection{Elliptic Weil pairing}

 In \S\ref{motivicGaloisgroups} we have introduced the biextension $\mathcal{B}$ constructed using one copy of the Poincar\'e biextension $\mathcal{P}$ of $(E,E^*)$ by $\GG_m$, one copy of the Poincar\'e biextension $\mathcal{P}^*$ of $(E^*,E)$ by $\GG_m$, one copy of the trivial biextension of $(E,E)$ by $\GG_m$ and one copy of the trivial biextension of $(E^*,E^*)$ by $\GG_m$. The pull-back $d^*\mathcal{B}$ via the diagonal morphism $d:E \times E^*\to(E \times E^*)^2$ is isomorphic to the line bundle $\cP\otimes\overline{\cP}^{-1}$, since $\cP^*= s^* \overline{\cP}^{-1}$ with $s: E^* \times E\to E\times E^*$ the morphism which exchange the factors.  
 
In particular by \eqref{autofactor} the factor of automorphy of $\cP \otimes \overline{\cP}^{-1}$ is 
\begin{equation}\label{eq:section}
a_0(\lambda,\lambda^*,z,z^*)=
a(\lambda,\lambda^*,z,z^*){\overline{a(\lambda,\lambda^*,z,z^*)}}^{-1} =  e^{2{\rm i}\pi{\rm Im}(z\overline{\lambda}^*+ \overline{\lambda} z^*)}.
\end{equation}

\medskip
According to \cite[\S 3.3]{B01}:

\begin{definition}\label{def-weilpairing}
	The Lie realization of the Weil pairing \eqref{eq:Weil-pairing} of the elliptic curve $E$ is
	$$\begin{matrix}
	\mathcal{W}_E: & \Lie E_\CC \times \Lie E^*_\CC & \longrightarrow 
	&\GG_m\hfill\\
	&(z,z^*) &\longmapsto &e^{2{\rm i}\pi{\rm Im}(\overline{z}z^*)}.
	\end{matrix}$$
\end{definition}

Extending the function $\eta(z)$ in \eqref{eq:eta} to $\CC$ by $\RR$-bilinearity, consider the following two variables function
\begin{align}
(\CC\setminus\Lambda)^2 & \longrightarrow  \CC\\
\nonumber	(z,q) & \longmapsto \tilde{f}_{q}(z) := \frac{\sigma(z+q)}{\sigma(z)\sigma(q)}e^{-\eta(q)z}.
\end{align}
Observe that the ratio $f_{q}(z)/\tilde f_{q}(z)$ is just the exponential function $e^{(\eta(q)-\zeta(q))z}$, which is a trivial theta function in $z$. The ratio $\tilde{f}_{q}(z)/\tilde{f}_{z}(q) = e^{\eta(z)q-\eta(q)z}$ is well defined on $\CC^2$. Moreover if $z=\alpha_1\omega_1+\alpha_2\omega_2\in\Lie E_{\RR}$, $z^*=\alpha^*_1\omega^*_1+\alpha^*_2\omega^*_2 \in \Lie E^*_\RR $, by Lemma \ref{lem:eta}, \eqref{dualityprodut} and Legendre's formula ({\it see} \cite[Chap.18, \S1, pp.241]{L87}), we have
\begin{equation}\label{eq:ratio}
\frac{\tilde{f}_{\rho(z^*)}(z)}{\tilde{f}_z(\rho(z^*))}
= e^{\eta(z)\rho(z^*)-\eta(\rho(z^*))z} = e^{(\eta_2\omega_1-\eta_1\omega_2)(\alpha^*_1\alpha_2-\alpha_1\alpha^*_2)}= e^{2{\rm i}\pi{\rm Im}(\overline zz^*)}.
\end{equation}
 The restriction to $\Lambda  \times \Lambda^*$ of
the logarithm of this ratio, that is $\log (\tilde{f}_{\rho(z^*)} (z)/\tilde{f}_{z} (\rho(z^*))),$ takes values in $ 2 \mathrm{i} \pi  \ZZ.$ 
With the notations of \cite[Lemma (10.2.3.4)]{D75}, the Hodge realization of the Weil pairing of $E$ is this restriction, 
$$\begin{matrix}
\langle\, , \, \rangle_\ZZ: & \Lambda  \otimes_\ZZ \Lambda^* &\longrightarrow & 2 \mathrm{i} \pi  \ZZ \hfill\\
&\lambda \otimes \lambda^* &\longmapsto &2\mathrm{i}\pi\mathrm{Im}(\overline{\lambda}\lambda^*)
\end{matrix}$$
and the restriction to $\Lambda  \times \Lambda^*$ of
 the ratio $\tilde{f}_{\rho(z^*)} (z)/\tilde{f}_{z} (\rho(z^*))$ is a
 trivialization of the restriction to $\Lambda \times \Lambda^* $  of the pull-back $(\exp_{E} \times \exp_{ E^*})^* \cP$ of the Poincar\'e biextension on $\Lie E_\CC \times \Lie E^*_\CC$.

\begin{remark}
	Let $N\in\mathbb N^\times$. If $P$ and $Q$ are $N$-torsion points of $E$ and $p$, $q$ their logarithms, then $\mathcal{W}_E (p,q)^N$ is an $N$-th root of unity which does not depend on the choice of $p$ and $q$, by Legendre relation. It is the $N$-th Weil pairing of the points $P$ and $Q$ ({\it see} \cite[Chap.18, \S1, pp.243-245]{L87} or/and \cite[Chap.3, \S8.]{S09}).
\end{remark}

Consider a 1-motive $M=[u:\ZZ \rightarrow G]$ defined over a sub-field $K$ of $\CC$, where $G$ is an extension of the elliptic curve $E$ by $\GG_m$ corresponding to the point $Q \in E^*(K)$, and $u(1)=R  = \exp_G(p,t) = (P, e^t f_q(p))$ is a point of $ G(K)$ living above $P \in E(K).$

\medskip

With notations of Construction \ref{B-Z} (1), $B$ is the smallest abelian sub-variety of $E \times E^* $ containing the point $(P,Q) \in (E \times E^*)(K).$ We have that:
\begin{itemize}
	\item $\dim B=2$ if and only if the points $P$ and $Q$ are	$\End(E) \otimes_\ZZ \QQ$-linearly independent;
	\item $\dim B=1$ if and only if $P$ and $Q$ are	$\End(E) \otimes_\ZZ \QQ$-linearly dependent but not both torsion points;
	\item $\dim B=0$ if and only if $P$ and $Q$ are torsion points.
\end{itemize}
Recall that by \cite{Br94} and \cite{Br95} the restriction of the Poincar\'e biextension $\cP$ to $B$ is trivial if and only if one of the two following conditions happens: $\dim B=0$  (that is $P$ and $Q$ are torsion points) or
 there exists an antisymmetric group morphism $g:E
\rightarrow E^* $ and an integer $N$ such that
$g(P)=N Q$ (and $\dim B=1$ if $P$ or $Q$ is not torsion). 

\medskip

By Construction \ref{B-Z} (2), $Z'(1)$ is the sub-torus of $\GG_m$ containing the image of the Lie bracket $[\cdot,\cdot]: B \otimes B \to \GG_m$ in \emph{loc.cit}. According to \cite[(2.8.4)]{B03} and \cite[Lemma 3.4(ii)]{B01}, for $(s_1,t_1),(s_2,t_2) \in \Lie B_\CC$ we have
$$	[(s_1,t_1),(s_2,t_2)] = \mathcal{W}_E(s_1,t_2)  \cdot \mathcal{W}_{E^*} (t_1,s_2) = \mathcal{W}_E(s_1,t_2) \cdot \mathcal{W}_E(s_2,t_1)^{-1}$$
and so
$$
\nonumber Z'(1 ) = \left\langle	
e^{2{\rm i}\pi{\rm Im}(\overline{s_1}t_2-\overline{s_2}t_1)}
\; \big\vert \;  (s_1,t_1),(s_2,t_2) \in \Lie B_\CC  \right\rangle \subseteq \GG_m. 
$$
Looking at the factor of automorphy \eqref{eq:section} of $d^*\mathcal{B}$ we observe that the torus $Z'(1)$ contains the values of the factor of automorphy of the torsor $i^*d^*\mathcal{B}.$ Thus, quotienting the fibre of $i^*d^*\mathcal{B}$ by $Z'(1),$ we get the trivial torsor that we have denoted $pr_*i^*d^* \mathcal{B}$ in Construction \ref{B-Z} (2). We have $\dim Z'(1)=0$ if and only if the torsor $i^*d^*\mathcal{B}$ is trivial (that is $i^*d^*\mathcal{B}=B \times \GG_m$), and as observed before, this happens if and only if $P$ and $Q$ are torsion points ($\dim B=0$), or $g(P)=N Q$
with $g: E \to E^*$ an antisymmetric group morphism ($\dim B=1$).

\medskip

 By Construction \ref{B-Z} (3),
the torus $Z(1)$ is the smallest sub-torus of $\GG_m$ containing $Z'(1)$ and such that the sub-torus $(Z/Z')(1)$ of $\GG_m/Z'(1)$ contains $\pi (\widetilde{R})$.
 With our hypothesis the dimension of the torus $Z(1)$ is 0 or 1. Since $\dim Z(1) = \dim Z'(1) + \dim (Z/Z')(1)$, we have three cases:
	\begin{itemize}
		\item $\dim Z(1)=0 $ implies $\dim Z'(1)=0=\dim (Z/Z') (1)$. The condition $\dim Z'(1) =0$ implies that $i^*d^*\mathcal{B}=B \times \GG_m.$
		 Moreover the subsequent condition $\dim(Z/Z')(1) =0$  implies that $\tilde{R}=\pi (\tilde{R})$ is a root of unity. 
		\item  $\dim Z(1)= \dim (Z/Z')(1)=1$ and  $ \dim Z'(1) = 0$. As before, $\dim Z'(1) =0$ implies that the torsor $i^*d^*\mathcal{B}$ is trivial.
		 Moreover the subsequent condition $\dim (Z/Z')(1) =1$  implies that $\tilde{R}=\pi (\tilde{R})$ is not a root of unity. 
		\item  $\dim Z(1)= \dim Z' (1)=1$ and  $ \dim (Z/Z')(1) = 0$. We have $\dim Z'(1) =1$ if and only if the torsor $i^*d^*\mathcal{B}$ is not trivial. The subsequent condition $\dim (Z/Z')(1) =0$ implies that $\pi (\tilde{R})$ is a root of unity. 
	\end{itemize}

\subsection{Main theorem for semi-elliptic  surfaces}\label{semielliptic}
Let $E$ be an elliptic curve defined over $\oQQ$ and consider 
 an extension $G$ of $E$ by $\GG_m$ which corresponds to a point $Q$ of $ E^*(\oQQ)$.
In this setting the semi-abelian exponential map (\ref{eq:semiabexpi}) becomes 
\begin{align}
\nonumber	\exp_{G}:  \Lie G_\CC & \longrightarrow   G (\CC) \subset (\PP^{2} \times \PP^1 )(\CC) \\
\nonumber (z, t) & \longmapsto \Big([   \wp(z):\wp'(z):1 ], [ e^{t} f_{q}(z) :1 ]\Big). 
\end{align}
 The semi-elliptic version of Conjecture \ref{WRSA-V2} is 

\begin{conjecture}[Weak Relative Semi-elliptic conjecture]\label{WRSE-V2}
 Let $E$ be an elliptic curve defined over $\oQQ$. Let $G$ be an extension of $E$ by the torus $\GG_m$, which corresponds to a point $Q $ of $ E^*(\oQQ)$, and denote by $\Omega_G$ its period matrix. Let $(z,t) $ be a point of $\Lie G_\CC\simeq\CC^2$. Then the transcendence degree over $\QQ (\Omega_G)$ of the field generated  by $z$, $t$, $\wp(z)$ and $e^{t} f_{q}(z)$  is at least the
dimension of the smallest algebraic subgroup containing a translate of the point $\exp_G(z,t)$ by an algebraic point.
\end{conjecture}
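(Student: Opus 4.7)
The plan is to deduce Conjecture \ref{WRSE-V2} directly as the semi-elliptic specialization (with $A=E$ an elliptic curve, $s=1$ and $n=1$) of the Weak Relative Semi-abelian conjecture \ref{WRSA-V2}, which is itself a consequence of the Relative Semi-abelian conjecture \ref{WSA-V2} as recorded just after its statement. The components of $r=(z,t)\in \Lie G_\CC$ are $z$ and $t$, while the components of $\exp_G(r)=(P,e^tf_q(z))$ are $\wp(z),\wp'(z)$ and $e^tf_q(z)$; since $\wp'(z)$ is algebraic over $\oQQ(\wp(z))$ via the Weierstrass equation, the field generated coincides up to algebraic closure with $\oQQ(z,t,\wp(z),e^tf_q(z))$.

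First I would build the auxiliary 1-motive $M=[u:\ZZ\to G]$ with $u(1)=R:=\exp_G(z,t)$, and invoke Conjecture \ref{WSA-V2} applied to $M$, obtaining
\[
\mathrm{tran.deg}_{\oQQ(\Omega_G)}\,\oQQ\big(\Omega_G, R, \tlog_G(R)\big)\ \geq\ 2\dim B_Q+\dim Z(1),
\]
where $B_Q$ and $Z(1)$ are the objects of Construction \ref{B-Z} attached to $M$. Next, noting that $\tlog_G(R)$ carries, beyond the components of $\log_G(R)=(z,t)$, only the extra elliptic integral of second kind $\int_O^P\eta$, and that this integral lies in a field of transcendence degree at most $\dim B_O$ over $\oQQ(\Omega_G,P)$ (since such integrals are coordinates on the abelian variety $B_O$, and $B_Q=B_O+P$ by \eqref{eq:B_QB_O}), I would subtract at most $\dim B_O=\dim B_Q$ from the preceding bound. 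This yields
\[
\mathrm{tran.deg}_{\oQQ(\Omega_G)}\,\overline{\QQ\big(\Omega_G,z,t,\wp(z),e^tf_q(z)\big)}\ \geq\ \dim B_Q+\dim Z(1).
\]

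The final step is to identify $\dim B_Q+\dim Z(1)$ with the dimension of the smallest translate of an algebraic subgroup of $G$ defined over $\oQQ$ containing $R$. By Lemma \ref{lem:DansGn} the $Z(1)$-torsor $E$ lives inside $G$; by Lemma \ref{lem:tildeR} it is the smallest torsor defined over $\oQQ$ containing $R$; and by \eqref{def:tE} together with Remark \ref{rk:defEtE}, $\tE=E-R$ is a semi-abelian subvariety of $G$ defined over $\oQQ$, extension of $B_O$ by $Z(1)$, of dimension $\dim B_O+\dim Z(1)=\dim B_Q+\dim Z(1)$. Hence $E=R+\tE$ is precisely the sought smallest translate, concluding the deduction.

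The delicate point of this plan is the second step: one has to control the contribution to the transcendence degree coming from the elliptic integral of second kind inside $\tlog_G(R)$ so as to pass from the bound $2\dim B_Q+\dim Z(1)$ of the Relative Semi-abelian conjecture down to the sharper $\dim B_Q+\dim Z(1)$. Morally this exploits that the first De Rham cohomology of $B_O$ is $2\dim B_O$-dimensional and evenly split between first and second kind classes, but it requires a careful verification that in our setup the second-kind coordinates span at most $\dim B_O$ algebraically independent values over $\oQQ(\Omega_G,P)$, as is implicitly asserted in the justification following Conjecture \ref{WRSA-V2}.
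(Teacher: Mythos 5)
Your proposal is correct and follows essentially the same route as the paper: Conjecture \ref{WRSE-V2} is the specialization (with $A=E$, $n=s=1$) of Conjecture \ref{WRSA-V2}, which the paper deduces from Conjecture \ref{WSA-V2} in the one-sentence remark just after its statement, using precisely the two observations you spell out — the second-kind contribution to $\tlog_G(R)$ adds transcendence degree at most $\dim B_O=\dim B_Q$, and $\dim B_Q+\dim Z(1)$ is the dimension of the torsor $E$, which by Lemmas \ref{lem:tildeR}, \ref{lem:DansGn}, equation \eqref{def:tE} and Remark \ref{rk:defEtE} is the smallest translate of a semi-abelian subvariety of $G$ defined over $\oQQ$ containing $R$. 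You also correctly flag, as the only delicate point, that the bound on the second-kind integrals is asserted rather than verified, which matches the level of detail the paper itself provides.
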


Consider now the following fields $\cE  = \bigcup_{n \geq 0} {\cE}_n$ and $\cL'=  \bigcup_{n \geq 0} {\cL}_n'$, with
\begin{equation}
\begin{aligned}
 \nonumber {\cE}_0 &= \oQQ, \quad &{\cE}_n &= \overline{{\QQ}\Big( \wp(z) ,  e^{t} f_{q}(z) \; \Big| \quad z,t \in \cE_{n-1} \Big)} \quad &\mathrm{for} \; n \geq 1,\\
 {\cL}_0' &= \overline{{\QQ}}, \quad &{\cL}_n' &= \overline{{\QQ}\Big( \Omega_G,z, t \;  \Big| \quad \wp(z),e^{t} f_{q}(z)  \in {\cL}_{n-1}'\Big)} \quad &\mathrm{for} \; n \geq 1,
\end{aligned}
\end{equation}
where the entries of $\Omega_G$ are the periods $\omega_1, \omega_2, \eta_1 ,\eta_2$ of $E$, plus the periods $ \eta_1 q - \omega_1 \zeta(q) ,\eta_2 q - \omega_2 \zeta(q)$ (see \cite[Prop 2.3]{B19}).
A variant of our main Theorem \ref{MainThm} gives

\begin{corollary}
	If Weak Relative Semi-elliptic conjecture \ref{WRSE-V2} is true, then $ {\cE}$ and $ {\cL}'$ are algebraically independent over $\oQQ.$ 
\end{corollary}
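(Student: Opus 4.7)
The plan is to adapt the proof of Theorem~\ref{MainThm} \emph{mutatis mutandis}: substitute $\log_G$ for $\tlog_G$, $\cL'$ for $\cL$, and apply the Weak Relative Semi-elliptic Conjecture~\ref{WRSE-V2}---interpreted as holding for the semi-abelian powers $G^n$, whose abelian part is $E^n$---in place of the Relative Semi-abelian Conjecture~\ref{WSA-V2}. Since $\cE$ and $\cL'$ are algebraically closed over $\oQQ$, \cite[Lemma~1]{PSS} reduces linear disjointness to algebraic independence. I would argue by contradiction: assume a minimal pair $(m,n)\in\NN^2$ for which $\cE_m$ and $\cL'_n$ are not algebraically independent over $\oQQ$, so that some $\ell\in\cL'_n\setminus\oQQ$ is algebraic over $\oQQ(e_1,\dots,e_k)$ for some $e_1,\dots,e_k\in\cE_m$. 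Via the analogues of \cite[Lemmas~2 and~3]{PSS}, extract finite sets $\mathcal{A}\subset\cE_{m-1}$ and $\mathcal{C}\subset(\cL'_n)^2$, and form 1-motives $M_i=[u_i:\ZZ\to G^{n_i}]$, $i=1,2$, with $u_i(1)=\mathbf{R_i}=\exp_{G^{n_i}}(\mathbf{r_i})$, built from points $\mathbf{r_1}\in(\Lie G_\CC)^{n_1}$ and $\mathbf{r_2}\in(\Lie G_\CC)^{n_2}$ whose coordinates come from $\mathcal{A}$, respectively $\mathcal{C}$. Set
\[K_1:=\overline{\QQ(\mathbf{R_1},\log_{G^{n_1}}(\mathbf{R_1}))},\quad K_2:=\overline{\QQ(\Omega_G,\mathbf{R_2},\log_{G^{n_2}}(\mathbf{R_2}))},\]
so that $\oQQ(e_1,\dots,e_k)\subset K_1$ and $\oQQ(\ell)\subset K_2$.

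The key change from the main proof is that $K_1,K_2$ are generated using $\log_{G^{n_i}}$ rather than $\tlog_{G^{n_i}}$, so the second-kind components $\int_O^{P_i}\eta$ are absent. Accordingly, the natural upper bound becomes $\mathrm{tran.deg}_{\oQQ}K_i\leq\dim\tE_i$, where $\tE_i$ is the semi-abelian variety of Lemma~\ref{lem:tildeR} and~\eqref{def:tE}, of dimension $\dim B_Q^i+\dim Z^i(1)$: indeed, $(\mathbf{R_i},\log_{G^{n_i}}(\mathbf{R_i}))$ lies on the graph of a local branch of $\exp_{G^{n_i}}^{-1}$ restricted to the translate of $\tE_i$ containing $\mathbf{R_i}$, a complex analytic submanifold of dimension $\dim\tE_i$. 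The Weak Relative Semi-elliptic Conjecture applied to $M_i$ provides the matching lower bound $\mathrm{tran.deg}_{\oQQ(\Omega_G)}K_i(\Omega_G)\geq\dim\tE_i$, forcing $\mathrm{tran.deg}_{\oQQ}K_1=\dim\tE_1$ and $\mathrm{tran.deg}_{\oQQ(\Omega_G)}K_2=\dim\tE_2$. Applying the same conjecture to the 1-motive $M_3$ built from $\mathbf{r_3}=(\mathbf{r_1},\mathbf{r_2})$ yields $\mathrm{tran.deg}_{\oQQ(\Omega_G)}K_1K_2\geq\dim\tE_3$.

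The final ingredient is the equality $\dim\tE_3=\dim\tE_1+\dim\tE_2$, derived from the argument of Lemma~\ref{lem:last}, which establishes $\tE_3=\tE_1\times\tE_2$. That argument depends only on the minimality of $(m,n)$ (equivalently, the algebraic independence of $\cE_m$ and $\cL'_{n-1}$) and on a specialization of $\mathbf{R_3}\in\tE_3(\oQQ)$; it is insensitive to the presence or absence of second-kind integrals and so transfers verbatim. Combining the three bounds and adding $\mathrm{tran.deg}_{\oQQ}\oQQ(\Omega_G)$ on both sides yields
\[\mathrm{tran.deg}_{\oQQ}K_1K_2=\mathrm{tran.deg}_{\oQQ}K_1+\mathrm{tran.deg}_{\oQQ}K_2,\]
so $K_1$ and $K_2$ are algebraically independent over $\oQQ$---contradicting the fact that $\ell\in K_2$ is algebraic over $\oQQ(e_1,\dots,e_k)\subset K_1$. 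The main obstacle will be the clean interpretation of the Weak Semi-elliptic Conjecture for the semi-abelian powers $G^{n_i}$ (strictly speaking a Weak Semi-abelian statement when $n_i>1$), together with the verification that, in the absence of second-kind data, the smallest translate of an algebraic subgroup of $G^{n_i}$ defined over $\oQQ$ and containing $\mathbf{R_i}$ is precisely the torsor $E_i=\tE_i+\mathbf{R_i}$ of Lemma~\ref{lem:tildeR}.
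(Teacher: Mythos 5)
The paper offers no explicit proof for this corollary -- it simply asserts that ``a variant of our main Theorem \ref{MainThm} gives'' the statement -- so your proposal of transcribing the proof of Theorem \ref{MainThm} with $\log_G$ in place of $\tlog_G$, $\cL'$ in place of $\cL$, and the weak lower bound $\dim\tE_i=\dim B_Q^i+\dim Z^i(1)$ in place of $2\dim B_Q^i+\dim Z^i(1)$, is exactly what the authors intend, and the skeleton you describe (extract $\mathcal A\subset\cE_{m-1}$, $\mathcal C$, build $M_1$, $M_2$, $M_3$, squeeze transcendence degrees, invoke the $\tE_3=\tE_1\times\tE_2$ argument of Lemma \ref{lem:last}) goes through. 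Two points of hygiene worth flagging. First, your justification of the upper bound by saying that $(\mathbf{R}_i,\log_{G^{n_i}}(\mathbf{R}_i))$ ``lies on the graph of a local branch of $\exp^{-1}$ restricted to the translate of $\tE_i$, a complex analytic submanifold'' is only heuristic: an analytic submanifold is not defined over $\oQQ$, so membership in it does not by itself cap the transcendence degree. The rigorous route, which is what the proof of Theorem \ref{MainThm} actually does, is to observe via the analogue of \cite[Lemma 2]{PSS} that for $i=1$ the components of $\mathbf{r}_1=\log_{G^{n_1}}(\mathbf{R}_1)$ are algebraic over $\oQQ(\mathbf{R}_1)$, so $K_1=\overline{\QQ(\mathbf{R}_1)}$ and $\mathrm{tran.deg}_{\oQQ}K_1\leq\dim E_1$ because $\mathbf{R}_1\in E_1(\CC)$ and $E_1$ is defined over $\oQQ$ (Lemma \ref{lem:tildeR}); dually, for $i=2$ the components of $\mathbf{R}_2$ are algebraic over $\oQQ(\Omega_G,\mathbf{r}_2)$ and $\mathbf{r}_2$ lies in $\Lie E_{2,\CC}$, giving $\mathrm{tran.deg}_{\oQQ(\Omega_G)}K_2\leq\dim E_2$. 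Notice also that the blanket statement ``$\mathrm{tran.deg}_{\oQQ}K_i\leq\dim\tE_i$'' is false for $i=2$ since $K_2\supset\oQQ(\Omega_G)$; the bound for $K_2$ must be taken over $\oQQ(\Omega_G)$, as your later equality statement correctly records. Second, you are right that what is literally needed is a Weak Relative Semi-abelian statement for the powers $G^{n_i}$ (Conjecture \ref{WRSA-V2} applied to $G^{n_i}$, whose abelian part is $E^{n_i}$), not merely Conjecture \ref{WRSE-V2} for the fixed surface $G$; the paper glosses over this, and it should be read as assumed. With these clarifications, the argument you outline is sound.
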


\subsection{Errata}\label{Rk-tableau}
In \cite[\S 4]{B19} the first author furnishes a table of dimensions of the motivic Galois group of the 1-motive $M=[u:\ZZ \rightarrow G]$, where $G$ is an extension of an elliptic curve $E$ by $\GG_m$. Unfortunately the second to last line is wrong and a case is missing. We give here the corrected and complete Table \ref{tableerr}.

\medskip

In Lemma 3  of \cite{PSS} one should read $\mathcal C\subset\mathcal L_n^g$ instead of $\mathcal C\subset\mathcal L_n^{2g}$. Accordingly, the definition of $\mathbf u_2$ has to be replaced by the concatenation of the elements of $\mathcal C$. Also, to have the proof of Theorem 1 in this reference work smoothly, one can define $\mathcal L_0$ as $\oQQ$ rather than $\oQQ(\widetilde{\Lambda}_A)$. This in turn gives that our earlier choice $\mathcal L_0=\oQQ(\widetilde{\Lambda}_A)$ works eventually.

\begin{table}[h]
	\caption{Dimension of $\Galmot (M)$}\label{tableerr}	
	\begin{tabular}{|c|c|c|c|c|}
		\hline
		& \rotatebox{90}{\hbox{$\dim \UR (M)$}}  & \rotatebox{90}{\hbox{$\dim \Galmot (M)$}}  & \rotatebox{90}{\hbox{$\dim \Galmot (M)$}} & $M$   \\
		&  &  $E$ CM  &  $E$ not CM & \\ \hline
		$Q$, $R$ torsion & 0 & 2& 4 & $M=[u:\ZZ \rightarrow E \times \GG_m]$ \\ 
		($\Rightarrow$ P torsion)  & & & & $ u(1)=(0,1)$ \\ \hline
		$P$, $Q$ torsion  &1 & 3& 5 &  $M=[u:\ZZ \rightarrow E \times \GG_m] $\\ 
		($R$ not torsion)  & & & &  $u(1)=(0,R)$ \\ \hline
		$R$ torsion ($\Rightarrow$ $P$ torsion)  &2 & 4& 6 & $M=[u:\ZZ \rightarrow G]$\\
		($Q$ not torsion)  & & & & $ u(1)=0$\\ \hline
		$Q$ torsion  &3 &5 & 7 & $M=[u:\ZZ \rightarrow E \times \GG_m]$\\ 
		($P$ and $R$ not torsion)  & & & & $u(1)=(P,R) $  \\ \hline
		$P$ torsion  &3 &5 &7  &  $M=[u:\ZZ \rightarrow E^* \times \GG_m] $ \\ 
		($R$ and $Q$ not torsion)  & & & & $u(1)=(Q,R) $ \\ \hline
		$P$, $Q$   & & &   & \raise-5pt\hbox{$M=[u:\ZZ \rightarrow G]$}\\\noalign{\vskip-5pt} 	
		$\End(E) \otimes_\ZZ \QQ$-lin. dep.  &2 &4 &(*) & \raise-5pt\hbox{$u(1)=R$}\\\noalign{\vskip-5pt}
		($M$ deficient) & & & &  \\ \hline
		$P$, $Q$   & & & & \raise-5pt\hbox{$M=[u:\ZZ \rightarrow G]$}\\\noalign{\vskip-5pt}
		$\End(E) \otimes_\ZZ \QQ$-lin. dep.  &3 &5 &7 & \raise-5pt\hbox{$u(1)=R$}\\\noalign{\vskip-5pt} 
		($M$ not deficient) & & & &  \\ \hline
		$P$, $Q$   &5 &7 & 9  & $M=[u:\ZZ \rightarrow G]$\\ 
		$\End(E) \otimes_\ZZ \QQ$-lin. indep.  & & & &$ u(1)=R$\\ \hline
	\end{tabular}
\par\bigskip (*) Observe that if the elliptic curve is not CM, the 1-motive $M$ cannot be deficient.
\end{table}

\medskip


\bibliographystyle{plain}

\begin{thebibliography}{10}


\bibitem{A04} 
Y. Andr\'e.
\newblock Une introduction aux motifs (motifs purs, motifs mixtes, p\'eriodes).
\newblock Panoramas et Synth\`eses [Panoramas and Syntheses], 17. Soci\'et\'e Math\'ematique de France, Paris, 2004.


\bibitem{A17} 
Y. Andr\'e.
\newblock Groupes de Galois motiviques et p\'eriodes.
\newblock S\'eminaire Bourbaki 68\`eme ann\'ee, 2015-2016,  expos\'e n$^\circ$1104, Ast\'erisque 390 (2017), pp.1--26.


\bibitem{A19} 
Y. Andr\'e.
\newblock A note on 1-motives.
\newblock Int. Math. Res. Not. (2019), DOI 10.1093/imrn/rny295.


\bibitem{B01}
C. Bertolin.
\newblock The Mumford-Tate group of 1-motives. 
\newblock Ann. Inst. Fourier (Grenoble) 52 (2002), no. 4, pp. 1041--1059.

 
\bibitem{B02} 
C. Bertolin.
\newblock P\'eriodes de 1-motifs et transcendence.
\newblock J. Number Theory 97, no. 2 (2002), pp. 204--221.

\bibitem{B03} 
C. Bertolin.
\newblock Le radical unipotent du groupe de Galois motivique d'un 1-motif.
\newblock Math. Ann. 327, no. 3 (2003), pp.585--607.

\bibitem{B09bis} 
C. Bertolin.
Extensions and biextensions of locally constant group schemes, tori and abelian schemes.
\newblock Math. Z. 261 (2009), no. 4, pp. 845--868.


\bibitem{BM09} C. Bertolin, C. Mazza.
\newblock Biextensions of 1-motives in Voevodsky's category of motives.
\newblock Int. Math. Res. Not. IMRN 2009 (2009), no. 19, pp. 3747--3757.


\bibitem{B09} 
C. Bertolin.
\newblock Multilinear morphisms between 1-motives.  
\newblock J. Reine Angew. Math. 637 (2009), pp. 141--174.

\bibitem{B12} 
C. Bertolin.
\newblock Homological interpretation of extensions and biextensions of 1-motives.
\newblock J. Number Theory 132 (2012), no. 10, pp.2103--2131.

\bibitem{B13} 
C. Bertolin.
\newblock Biextensions of Picard stacks and their homological interpretation. 
\newblock Adv. Math. 233 (2013), no. 1, pp. 1--39.


\bibitem{BT14}
C. Bertolin, A. Tatar.
\newblock Extensions of Picard 2-stacks and the cohomology groups $\mathrm{Ext}^i$ of length 3 complexes.
\newblock Ann. Mat. Pura Appl. 193 (2014), no. 1, pp. 291--315.


\bibitem{BT18}
C. Bertolin, A. Tatar.
\newblock Higher dimensional study of extensions via torsors.
\newblock Ann. Mat. Pura Appl. 197 (2018), no. 2, pp. 433--468.

\bibitem{BB}
C. Bertolin, S. Brochard.
\newblock Morphisms of 1-motives defined by line bundles.  
\newblock Int. Math. Res. Not. IMRN 2019 (2019), no.5, pp. 1568--1600.

\bibitem{B19} 
C. Bertolin.
\newblock Third kind elliptic integrals and 1-motives. With a letter of Y. Andr\'e and an appendix by M. Waldschmidt.
\newblock J. Pure Appl. Algebra 224, no.10 (2020), 106396.

\bibitem{BG} 
C. Bertolin, F. Galluzzi.
\newblock  A note on divisorial correspondences of  extensions of abelian schemes by tori.
\newblock Comm. Algebra 48 (2020), no.7, pp. 3031--3034.

\bibitem{BG2} 
C. Bertolin, F. Galluzzi.
\newblock Brauer groups of 1-motives.
\newblock J. Pure Appl. Algebra 225 (2021), no.11, 106754 , 22 pp.  


\bibitem{Br94} 
D. Bertrand.
\newblock 1-motifs et relations d'orthogonalit\'e dans les groupes de Mor\-dell-Weil.
\newblock Publ. Math. Univ. Paris VI, 108,
Probl\`emes Diophantiens 92-93, expos\'e 2. = Math. Zapiski, 2, (1996), in Russian.

\bibitem{Br95} 
D. Bertrand.
\newblock Relative splitting of one-motives.
\newblock Number Theory (Tiruchirapalli, 1996), Contemp. Math., 210, Amer. Math. Soc., Providence, RI (1998).

\bibitem{Br} 
D. Bertrand.
\newblock Endomorphismes de groupes alg\'ebriques; applications arithm\'etiques.
\newblock Diophantine approximations and transcendental numbers (Luminy, 1982), Progr. Math. 31, Birkh\"auser Boston, Boston, MA, (1983), pp.1--45.
 
\bibitem{BL}
C. Birkenhake, H. Lange.
\newblock Complex Abelian Varieties.
\newblock  (2nd ed.) Grund. Math. Wissen., Vol. 302. New York Springer-Verlag., (2004).

\bibitem{Breen}
L. Breen.
\newblock Fonctions th\^eta et th\'eor\`eme du cube. 
\newblock Lecture Notes in Math. 980, Springer (1983).


\bibitem{C}
C. Cheng, B. Dietel, M. Herblot, J. Huang, H. Krieger, D. Marques, J. Mason, M. Mereb and S.R. Wilson.
\newblock Some consequences of Schanuel Conjecture.
\newblock J. Number Theory 129, no.6 (2009), pp.1464--1467.


\bibitem{Ch}
G. Chudnovsky.
\newblock Contributions to the theory of transcendental numbers.
\newblock Mathematical Surveys and Monographs, 19. American Mathematical Society, Providence, RI, (1984) xi+450 pp.

\bibitem{D75} 
P. Deligne.
\newblock Th\'eorie de Hodge III.
\newblock Inst. Hautes \'Etudes Sci. Publ. Math. 44 (1974), pp.5--77. 

\bibitem{D82}
P. Deligne.
\newblock Hodge cycles on abelian varieties.
\newblock in \emph{Hodge cycles, motives, and Shimura varieties}. 
\newblock Lecture Notes in Mathematics, Vol. 900, Springer-Verlag (1982).

\bibitem{D89} 
P. Deligne.
\newblock Le groupe fondamental de la droite projective moins trois points. Galois group over $\QQ$.
\newblock Math. Sci. Res. Inst. Pub. 16. (1989).

\bibitem{D90} 
P. Deligne.
\newblock Cat\'egories tannakiennes.
\newblock The Grothendieck Festschrift II, Birk\-h\"au\-ser 87, (1990).

\bibitem{D02}
P. Deligne.
\newblock Letter to the first author, (2001).

\bibitem{G66} 
A. Grothendieck.
\newblock On the de Rham cohomology of algebraic varieties.
\newblock  Inst. Hautes \'Etudes Sci. Publ. Math. 29, (1966) pp.95--103.

\bibitem{SGA7} 
A. Grothendieck and al.
\newblock  Groupes de Monodromie en G\'eom\'etrie Alg\'ebrique.
\newblock  SGA 7 I,  Lecture Notes in Mathematics, Vol. 288. Springer-Verlag, Berlin-New York, (1972).

\bibitem{L87}
S. Lang.
\newblock
Elliptic functions.
\newblock  2nd ed. (English) Graduate Texts in Mathematics, Vol. 112. New York Springer-Verlag. XI, (1987).

\bibitem{L}
S. Lang.
\newblock Algebra. 
\newblock 3rd revised ed. (English) Graduate Texts in Mathematics, Vol. 211. New York, NY, Springer, (2002).


\bibitem{M74} 
D. Mumford.
\newblock Abelian varieties.
\newblock Tata Institute of Fundamental Research Studies in Mathematics, No. 5 Published for the Tata Institute of Fundamental Research, Bombay; Oxford University Press, London (1970).

 
\bibitem{N00} 
M. Nori.
\newblock Course at Tata Int. Fund. Res.
\newblock Notes by N. Fakhruddin, Mumbai (2000).

\bibitem{PSS}
P. Philippon, B. Saha, E. Saha.
\newblock An abelian analogue of Schanuel's Conjecture and applications.
\newblock Ramanujan J. 52, No. 2 (2020), pp.381--392.


\bibitem{S72}
N. Saavedra Rivano.
\newblock Cat\'egories tannakiennes.
\newblock Lecture Notes in Mathematics, Vol. 265. Springer-Verlag, Berlin-New York (1972).


\bibitem{S09}
J. H. Silverman.
\newblock The arithmetic of elliptic curves. 
\newblock 2nd ed. (English) 
Graduate Texts in Mathematics 106. New York, NY: Springer (2009).


\bibitem{V18} 
G. Vall\'ee.
\newblock Sur la Conjecture de Schanuel ab\'elienne.
\newblock preprint (October 2018), 13p.


\bibitem{W}
M. Waldschmidt.
\newblock Nombres transcendants et groupes alg\'ebriques. 
\newblock Ast\'erisque Series Profile, 69/70. Paris: Soci\'et\'e Math\'ematique de France (1987).

\bibitem{We}
A. Weil.
\newblock Introduction \`a l'\'etude des vari\'et\'es k\"ahleriennes.
\newblock Hermann (1958).

\end{thebibliography}

\end{document}